\def\D{\text{\DH}}
\def\a{\alpha}
\def\b{\beta}
\def\ga{\gamma}
\def\de{\delta}
\def\De{\Delta}
\def\ep{\epsilon}
\def\la{\lambda}
\def\La{\Lambda}
\def\si{\sigma}
\def\th{\theta}
\newcommand{\Th}[0]{\Theta}
\newcommand{\vphi}[0]{\varphi}
\def\DD{{\cal D}}
\def\LL{{\cal L}}
\def\RR{{\cal R}}
\def\SS{{\cal S}}
\newcommand{\N}[0]{\mathbb{N}}
\newcommand{\F}[0]{\mathbb{F}}
\newcommand{\R}[0]{\mathbb{R}}
\newcommand{\Z}[0]{\mathbb{Z}}
\newcommand{\C}[0]{\mathbb{C}}
\newcommand{\T}[0]{\mathbb{T}}
\newcommand{\ost}[1]{\accentset{\ast}{#1}}
\newcommand{\dmd}[1]{\accentset{\diamond}{#1}}
\newcommand{\ed}{\dmd{e}_u^{1/2}}
\newcommand{\supp}{\mathrm{supp} \,}
\newcommand{\suppt}{\mathrm{supp}_t \,}
\newcommand{\fr}[2]{\frac{#1}{#2}}
\newcommand{\ALI}[1]{\begin{align*} #1 \end{align*}}
\newcommand{\ali}[1]{\begin{align} #1 \end{align}}
\newcommand{\leqc}[0]{\lesssim}
\newcommand{\pr}[0]{\partial}
\newcommand{\nb}{\nabla}
\newcommand{\co}[1]{\left \|#1 \right \|_{C^0}}
\newcommand{\ca}[1]{\left \|#1 \right \|_{C^\a_x}}
\newcommand{\va}[0]{\vec{a}}
\newcommand{\vb}[0]{\vec{b}}
\DeclareMathAlphabet{\mathpzc}{OT1}{pzc}{m}{it}
\newcommand{\hc}[0]{\widehat{C}}
\newcommand{\eu}[0]{e_u^{1/2}}
\newcommand{\nat}[0]{(\Xi \eu)}
\def\XXint#1#2#3{{\setbox0=\hbox{$#1{#2#3}{\int}$}
     \vcenter{\hbox{$#2#3$}}\kern-.5\wd0}}
\newtheorem{thm}{Theorem}
\newtheorem{lem}{Lemma}[section]
\newtheorem{prop}{Proposition}[section]
\newtheorem{cor}{Corollary}[section]
\newtheorem{remk}{Remark}[section]
\theoremstyle{definition}
\newtheorem{defn}{Definition}[section]
\theoremstyle{remark}
\title{A direct approach to convex integration and failure of compactness for the SQG equation}
\author{Philip Isett and Andrew Ma}
\date{ }
\begin{document}
\begin{center} 
{\Large A direct approach to nonuniqueness and failure of compactness for the SQG equation}

$ $

Philip Isett\footnote{Department of Mathematics, University of Texas at Austin, Austin, TX 78712, USA \href{mailto:isett@math.utexas.edu}{isett@math.utexas.edu} } and Andrew Ma\footnote{Department of Mathematics, University of Texas at Austin, Austin, TX 78712, USA  \href{mailto:andyma@math.utexas.edu}{andyma@math.utexas.edu} } 
\end{center}

\begin{abstract}  We give an alternative proof of the nonuniqueness of weak solutions to the surface quasigeostrophic  equation (SQG) first shown in \cite{buckShkVicSQG}.  Our approach proceeds directly at the level of the scalar field.  Furthermore, we prove that every smooth scalar field with compact support that conserves the integral can be realized as a weak limit of solutions to SQG.%
\end{abstract}


\section{Introduction}
In this work, we are concerned with the two-dimensional surface quasi-geostrophic (SQG) equation, which is the following transport equation for a scalar field $\th : \R \times \T^2 \to \R$ 
\ali{
\label{eq:SQG}
\pr_t \th + \nb \cdot ( \th u) &= 0,  \qquad  
u \coloneqq \nb^\perp \La^{-1} \th, 
}
where $\La = (-\De)^{1/2}$, and $\nb^\perp = (-\pr_2, \pr_1)$.  

This equation is a basic example of an {\it active scalar} equation, so called because the drift velocity $u$ depends at every time (nonlocally) on the scalar field $\th$ that is being transported.  The SQG equation arises as a model in geophysical fluid dynamics, where it has applications to both meteorological and oceanic flows \cite{held1995surface, pedlosky2013geophysical}.  In this context, the field $\th$ represents temperature or surface buoyancy in a certain regime of stratified fluid flow.  The equation has been studied extensively in the mathematical literature due to its close analogy with the 3D incompressible Euler equations and the problem of blowup for initially classical solutions, which remains open as it does for the Euler equations.  A survey of developments is given in the introduction to \cite{buckShkVicSQG}.

Fundamental to the study of the SQG equation are the following basic conservation laws:
\begin{enumerate}[i]
\item For all sufficiently smooth solutions, the {\it Hamiltonian} $\fr{1}{2} \int_{\T^2} |\La^{-1/2} \th(x,t)|^2 dx$ remains constant. 
\item For all sufficiently smooth solutions, the $L^p$ norms $\| \th(t) \|_{L^p(\T^2)}$ remain constant $1 \leq p \leq \infty$, as do the integrals $\int_{\T^2} F(\th(x,t)) dx$ for any smooth function $F$. 
\item For all weak solutions to SQG, the integral $\int_{\T^2} \th(x,t) dx$ remains constant.
\end{enumerate}

\noindent(To prove (i), multiply \eqref{eq:SQG} by $\La^{-1} \th$ and integrate by parts.  To prove (ii), use $\nb \cdot u = 0$ to check that $F(\th)$ satisfies $\pr_tF(\th) + \nb\cdot(F(\th) u ) = 0$, then integrate.  To prove (iii), simply integrate in space.)

Note that, in contrast to (iii), the nonlinear laws (i) and (ii) require that the solution is ``sufficiently smooth''.  If one expects that turbulent SQG solutions have a dual energy cascade as in the Batchelor-Kraichnan predictions of 2D turbulence \cite{constScalingScalar,constantin2002energy,buckShkVicSQG}, then one has motivation to consider weak solutions that are not smooth.  A basic open question for the SQG equations is then: What function spaces represent the minimal amount of smoothness required for the conservation laws to hold?  This question is exactly the concern of the (generalized) {\it Onsager conjectures} for the SQG equation.  A closely related problem is to find the minimal regularity that guarantees uniqueness for the initial value problem.  

Using H\"{o}lder spaces as a natural scale to measure spatial regularity, the generalized Onsager conjectures for SQG can be stated as follows.  (The space-time regularity below is on a finite time interval.)   
\begin{enumerate}[i]
\item If $\th \in C^0$, then the conservation of the Hamiltonian 
holds.  However, for any $0 < \a < 1/2$ there exist weak solutions of class $\La^{-1/2} \th \in L_t^\infty C_x^\a$ that fail to conserve the Hamiltonian.
\item Let $F \in C^\infty(\R)$.  Then if $\th \in L_t^\infty C_x^\a$ for some $\a > 1/3$, the law $\int_{\T^2} F(\th(x,t))dx \equiv \mbox{const}$ is satisfied.  On the other hand, for any $\a < 1/3$, there exist solutions $\th \in L_t^\infty C_x^\a$ that violate this law.
\end{enumerate}

Some remarks about these conjectures are in order:
\begin{enumerate}
\item These conjectures generalize the original Onsager conjecture \cite{onsag}, which concerned turbulent dissipation in the incompressible Euler equations and stated that the H\"{o}lder exponent 1/3 should mark the threshold regularity for conservation of energy for solutions to the incompressible Euler equations.  
See \cite{deLSzeCtsSurv} for discussion of the significance of Onsager's conjecture in turbulence theory.  
\item The conjectured threshold exponents are derived from the fact that the conservation law for sufficiently regular solutions has been proven in both cases (i) and (ii).  Namely, \cite{isettVicol} proves conservation of the Hamiltonian for solutions with $\th \in L^3(I \times \T^2)$, while \cite{akramov2019renormalization} proves the conservation law (ii) for $\a > 1/3$.  The proofs are variants of the kinematic argument of \cite{CET}, which proved energy conservation for the Euler equations above Onsager's conjectured threshold.
\item Following the seminal work \cite{deLSzeCts}, advances in the method of convex integration have made possible the pursuit of Onsager's conjecture both for the Euler equations and more general fluid equations.  In particular, Onsager's conjecture for the 3D Euler equations has been proven in \cite{isettOnsag} (see also \cite{buckDeLSVonsag,isett2017endpoint}), while the first progress towards the Onsager conjecture (i) for SQG has been made in \cite{buckShkVicSQG}.  See \cite{deLOnsagThm,buckVicsurvey} for surveys and \cite{klainerman2017nash} for a discussion of generalized Onsager conjectures.  
\item To make sense of the conjecture in part (i), it must be noted that the SQG equation is well-defined for $\th$ having negative regularity.  Namely, for any smooth vector field $\phi(x)$ on $\T^2$, the quadratic form $\int_{\T^2} \th \nb^\perp \La^{-1}\th \cdot \phi(x) dx$, initially defined for smooth $\th$, has a unique bounded extension to $\th \in H^{-1/2}$.  This fact, which relies on the anti-self-adjointness of the operator $\nb^\perp \La^{-1}$, allows the SQG nonlinearity to be well-defined in $\DD'$ for $\th$ of class $\th \in L_t^2 H_x^{-1/2}$ (see \cite[Definition 1.1]{buckShkVicSQG}).
\end{enumerate}
The boundedness of the nonlinearity in a negative Sobolev space is key to constructing weak solutions to the SQG equations by compactness methods \cite{resnick1995}.  These solutions obtained in \cite{resnick1995, marchand2008existence} are known to exist for all time and to have $L^p$ norms bounded uniformly in $t$ by the initial data, but it remains unknown whether they are uniquely determined by their initial data.

The \cite{resnick1995} construction of weak solutions is closely tied to the phenomenon of {\bf weak compactness} for SQG solutions.  Namely weak limits of sequences of solutions to SQG in the space $L^\infty$ weak-* must also be solutions to the SQG equations.  The proof (see e.g. \cite{isettVicol}) relies on the boundedness of the nonlinearity in a negative Sobolev space.  The significance of weak compactness in the present context is that weak compactness appears to present an obstruction to attacking Onsager's conjecture and to proving nonuniqueness using convex integration methods when it is present \cite{deLSzeHFluid}.



The main goals of the present paper are to provide an alternative approach to the nonuniqueness for SQG first shown in \cite{buckShkVicSQG}, thus offering a different point of view from which to pursue the Onsager conjecture for SQG, and to establish an ``h-principle'' result that demonstrates the stark failure of weak compactness for SQG weak solutions.  Our main results are the following.

\begin{thm}[Existence of Weak Solutions] \label{thm:existence}
	For any $0 < \a < 3/10$ and $0 < \b <1/4$ there exists nontrivial weak solutions $\th$ to SQG with compact support in time such that the potential function $\La^{-1/2}\th$ is in the H\"{o}lder class $C_t^0C_x^\a \cap C_t^\b C_x^0 (\R \times \T^2)$. 
\end{thm}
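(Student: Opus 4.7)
The plan is to construct $\theta$ by a convex integration iteration applied directly at the scalar level. We introduce a relaxed SQG-Reynolds system
\begin{equation*}
\partial_t \theta_q + \nabla \cdot (\theta_q u_q) = \nabla \cdot R_q, \qquad u_q = \nabla^\perp \Lambda^{-1} \theta_q,
\end{equation*}
and produce inductively a sequence $(\theta_q, R_q)_{q \geq 0}$, each smooth and supported in a fixed compact time interval, satisfying inductive estimates of the form $\|\theta_q\|_{C^0} \lesssim M$, $\|\Lambda^{-1/2}\theta_q\|_{C^0_t C^\alpha_x} \lesssim 1$, $\|R_q\|_{C^0} \lesssim \delta_{q+1}$, with geometrically growing frequency $\lambda_q = \lambda_0^{b^q}$ and amplitude $\delta_q = \lambda_q^{-2\gamma}$ for suitable parameters $b > 1$ and $\gamma > 0$. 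The iteration is seeded by a small, nonzero, time-localized bump $(\theta_0, R_0)$; the strong $L^1$ convergence $R_q \to 0$ together with $C^0$ convergence $\theta_q \to \theta$ then produces a nontrivial weak solution in the desired class.

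The inductive step writes $\theta_{q+1} = \theta_q + \Theta$ with a highly oscillatory correction
\begin{equation*}
\Theta(x,t) = \sum_{\xi \in \Xi} a_\xi(x,t)\, W_\xi(\lambda_{q+1}\, x),
\end{equation*}
built from a finite family of scalar ``building blocks'' $\{W_\xi\}$ indexed by a set $\Xi$ of direction vectors. The amplitudes $a_\xi$ are selected by a geometric lemma so that the leading low-frequency component of the new quadratic interaction
\begin{equation*}
\Theta\, \nabla^\perp \Lambda^{-1}\Theta + \theta_q\, \nabla^\perp \Lambda^{-1}\Theta + \Theta\, u_q
\end{equation*}
cancels the existing Reynolds stress $R_q$. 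The essential algebraic subtlety, which distinguishes the direct approach from the velocity-level approach of \cite{buckShkVicSQG}, is that the quadratic form $\theta\, \nabla^\perp \Lambda^{-1} \theta$ at a single frequency has identically zero mean by the antisymmetry of $\nabla^\perp \Lambda^{-1}$; the $W_\xi$ must therefore couple two or more interacting Fourier modes whose product carries a nonzero vector average in the direction prescribed by $R_q$, and intermittency (Mikado-type concentration) of the building blocks is exploited to improve the regularity of the limit.

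The new stress $R_{q+1}$ is then constructed by inverting a divergence applied to the standard error types --- the oscillation error (what survives the geometric cancellation), the transport/material derivative error associated with $\partial_t \Theta + u_q \cdot \nabla \Theta$, the Nash error from low-high interactions, and the mollification error from regularizing $(\theta_q, R_q)$ before defining $a_\xi$. The inverse divergence gains a factor of $\lambda_{q+1}^{-1}$ on oscillatory terms; combined with the zero-order action of $\nabla^\perp \Lambda^{-1}$ as a Calder\'on-Zygmund operator, balancing these errors against $\delta_{q+2}$ yields the spatial constraint $\alpha < 3/10$, while the temporal threshold $\beta < 1/4$ arises from letting the amplitudes vary on a fast time scale to control the material derivative. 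The hardest step is the construction of the scalar geometric lemma and the intermittent building blocks: one must realize an \emph{arbitrary vector field} as the low-frequency part of a scalar self-interaction $\Theta\, \nabla^\perp \Lambda^{-1} \Theta$, while keeping the $C^0_t C^\alpha_x$ cost of $\Lambda^{-1/2}\Theta$ no larger than $\delta_q^{1/2} \lambda_{q+1}^{-\alpha}$. Once this ingredient is in place, the parameter tuning is essentially mechanical, and the limit $\theta$ is a nontrivial weak solution to SQG with the stated H\"older regularity and compact time support.
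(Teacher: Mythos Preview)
Your proposal has the correct overall architecture (convex integration at the scalar level, iterative reduction of a Reynolds-type error, compact time support via a seeded bump), but it misidentifies the key algebraic mechanism and consequently the form of the error.

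The central issue is the one you flag but do not resolve: by the oddness of the multiplier $m^l(\xi)=i\varepsilon^{la}\xi_a/|\xi|$, the vector $T^l[\Theta_I]\Theta_{\bar I}+T^l[\Theta_{\bar I}]\Theta_I$ vanishes to leading order for conjugate waves, so a single-divergence error $\nabla\cdot R_q$ with $R_q$ a vector cannot be cancelled by the low-frequency part of $\Theta\,T[\Theta]$. Your suggested fix --- pairing non-conjugate modes so their product carries a nonzero vector average --- does not work either: cross terms between distinct frequencies $k_1\neq k_2$ land at frequency $k_1\pm k_2\neq 0$, not at low frequency. The paper's resolution is structural: the error is taken in \emph{double}-divergence form $\nabla_j\nabla_l R^{jl}$ with $R$ a symmetric traceless $2$-tensor, and a bilinear anti-divergence form $B_\la^{jl}[\Theta_I,\Theta_{\bar I}]$ is constructed (via a Taylor expansion of the odd multiplier in frequency space) so that $\nabla_j\nabla_l B_\la^{jl}=\nabla_l(T^l[\Theta_I]\Theta_{\bar I}+T^l[\Theta_{\bar I}]\Theta_I)$ and the principal part $\theta_I^2\,\widehat K^{jl}_{\la,\mathrm{sym}}(\la\nabla\xi_I,-\la\nabla\xi_I)$ is a \emph{nonzero} symmetric traceless tensor. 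The geometric lemma then spans the two-dimensional space of such tensors, not vectors.

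Two further points: the paper does \emph{not} use intermittency or Mikado-type concentration; the waves are Nash-style oscillations $e^{i\la\xi_I}\theta_I$ with phases $\xi_I$ transported by the coarse flow, and the $3/10$ threshold comes from balancing the transport error (cost $\tau^{-1}$) against the high-frequency interference error (gain $b=\tau\Xi e_u^{1/2}$), not from any $L^p$ gain. Your invocation of intermittency is extraneous here. Also, the parameter scheme is geometric ($\Xi_{(k+1)}=\hc N\Xi_{(k)}$ with $N=Z^{5/3}$ fixed), not super-exponential; this is a cosmetic difference but reflects the Nash-style rather than Mikado-style construction.
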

\begin{thm}[h-principle]
\label{thm: h-principle}
	Fix any $\a < 3/10, \b <1/4$, and let $f$ be a smooth scalar field with compact support in time that staisfies the conservation law $\int_{\T^2}f(x,t) dx = 0$ as a function of time.  Then there exists a sequence of solutions to SQG, $\{\th_n\}_{n \in \N}$, such that each scalar field $\th_n$ has compact support in time and a corresponding potential function $\La^{-1/2} \th_n$ of H\"{o}lder class $\La^{-1/2} \th_n \in C_t^\b C_x^0 \cap C_t^0C_x^\a (\R \times \T^2)$, and such that $\La^{-1/2}\th_n \rightharpoonup \La^{-1/2}f$ in the $L^\infty(\R \times \T^2)$ weak-$\ast$ topology. 
\end{thm}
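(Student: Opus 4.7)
The plan is to run the convex integration iteration that underlies Theorem~\ref{thm:existence}, initialized from a nontrivial \emph{subsolution} built out of the prescribed profile $f$ rather than from the zero state. Call $(\th, R)$ a subsolution on $\R \times \T^2$, with defect field $R : \R \times \T^2 \to \R^2$, if
\[
\pr_t \th + \nb \cdot\!\bigl(\th \, \nb^\perp \La^{-1} \th\bigr) \;=\; \nb \cdot R,
\]
so that $R \equiv 0$ recovers a genuine weak solution. The goal is to produce, for each $n \in \N$, an exact SQG solution $\th_n = f + w^{(n)}$, where $w^{(n)}$ is bounded in $L^\infty$ uniformly in $n$ but composed of oscillations whose spatial frequencies tend to infinity as $n \to \infty$.

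First I would manufacture an initial subsolution $(\th_0, R_0) := (f, R_0)$, requiring $R_0$ to satisfy
\[
\nb \cdot R_0 \;=\; \pr_t f + \nb \cdot\!\bigl(f \, \nb^\perp \La^{-1} f\bigr).
\]
The right-hand side has spatial mean zero on $\T^2$ at every $t$: the second term is a pure divergence, and $\pr_t \int_{\T^2} f \, dx = 0$ by hypothesis. Hence $R_0$ exists (e.g.\ by applying $\nb \De^{-1}$ pointwise in $t$), is smooth, and can be taken with the same compact time support as $f$.

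Next I would feed $(\th_0, R_0)$ into the iteration of Theorem~\ref{thm:existence}: at stage $k \geq 1$ the scheme adds a high-frequency scalar perturbation $w_k$ of spatial frequency $\la_k$ and amplitude $\sim \de_k^{1/2}$, tuned so that the self-interaction of $w_k$ cancels $R_{k-1}$ modulo a smaller defect $R_k$ of size $\de_{k+1} \ll \de_k$. Temporal cutoffs keep each $w_k$ supported within $\suppt f$. Once the base frequency $\la_1$ is chosen large enough to absorb $(\th_0, R_0)$, the frequency-amplitude bookkeeping that yields $\La^{-1/2}\th \in C_t^0 C_x^\a \cap C_t^\b C_x^0$ for $\a < 3/10$, $\b < 1/4$ in Theorem~\ref{thm:existence} applies verbatim; the iterates converge in $C^0$ to an exact SQG solution of the prescribed regularity and $R_k \to 0$. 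Indexing this construction by $\la_1 = \la_1^{(n)} \to \infty$, with all other parameters fixed, produces the sequence $\{\th_n\}_{n \in \N}$.

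By construction, $\La^{-1/2}(\th_n - f) = \sum_{k \geq 1} \La^{-1/2} w_k^{(n)}$, where each $\La^{-1/2} w_k^{(n)}$ is uniformly bounded in $L^\infty$ (via the H\"older bounds of Theorem~\ref{thm:existence}) and oscillates at spatial frequency at least $\la_1^{(n)} \to \infty$. Given $\varep > 0$ and $\phi \in L^1(\R \times \T^2)$, choose $K$ so that $\sum_{k > K} \|\La^{-1/2} w_k^{(n)}\|_{L^\infty} < \varep/\|\phi\|_{L^1}$ uniformly in $n$ (possible by geometric decay of amplitudes), and apply Riemann--Lebesgue to each of the finitely many terms $\int \La^{-1/2} w_k^{(n)} \phi$ with $k \leq K$; this gives $\limsup_n \ab{\int \La^{-1/2}(\th_n - f)\,\phi} \leq \varep$, hence $\La^{-1/2}\th_n \rightharpoonup \La^{-1/2}f$ in $L^\infty(\R \times \T^2)$ weak-$\ast$. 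The main obstacle will be verifying that the inductive estimates close when the iteration is started from a large, $f$-dependent pair $(\th_0, R_0)$: this requires $\la_1^{(n)}$ to be chosen sufficiently large in terms of $\|f\|_{C^N}$ and $\|R_0\|_{C^N}$ for some finite $N$, and the first-stage defect cancellation must absorb $R_0$ without disturbing the geometric decay of $R_k$ in subsequent stages.
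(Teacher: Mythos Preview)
Your strategy is essentially the paper's: realize $f$ as an SQG-Reynolds flow, initialize the Main Lemma iteration there, and send the initial frequency $\Xi_{(0)} \to \infty$ to force the corrections to converge weak-$\ast$ to zero. The weak-$\ast$ argument you sketch via Riemann--Lebesgue is morally the same as the paper's, which makes it quantitative by writing $\La^{-1/2}\Theta_{(k)} = \nb_i W^i_{(k)}$ with $\co{W_{(k)}} \leqc (N\Xi_{(k)})^{-1}\D_{R(k)}^{1/2}$ and integrating by parts against a smooth approximation of the test function.

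There is, however, one genuine gap. The Main Lemma (via Definition~\ref{def: freq and energy levels}) requires the input scalar field to have \emph{compact frequency support}, $\supp \hat\th \subseteq \{|\xi| \leq \Xi\}$; this is used throughout the construction (e.g.\ the frequency localization of the corrections in Section~\ref{sec:freq localizing op}, and the transport/high-frequency error estimates). A generic smooth $f$ does not satisfy this, so you cannot feed $(f, R_0)$ directly into the iteration. The paper handles this by first mollifying in space, setting $f_n = \eta_n \ast f$ with $\supp \hat\eta_n \subseteq \{|\xi| \leq 2^n\}$, and then initializing at $(f_n, R_n)$ with $\Xi_{n(0)} \geq 2^n$; the extra error $\La^{-1/2}(f_n - f) \to 0$ in $C^0$ is absorbed separately. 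This approximation step is singled out in the introduction as new relative to prior h-principle arguments, and your proposal omits it.

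A smaller mismatch: the paper's error is a symmetric traceless $2$-tensor appearing in \emph{double}-divergence form, $\pr_t \th + u^l\nb_l\th = \nb_j\nb_l R^{jl}$, not a vector field in single-divergence form. Your $R_0 = \nb\De^{-1}(\cdots)$ is the wrong object; you need the second-order anti-divergence $\RR$ of Section~\ref{sec:Second order anti-div} applied to $\pr_t f_n + T^l f_n \nb_l f_n$. This is easily fixed once noted, but it matters for plugging into the Main Lemma as stated.
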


The ``h-principle'' result stated in Theorem~\ref{thm: h-principle} is a new result to this paper, while Theorem~\ref{thm:existence} was first obtained in the work \cite{buckShkVicSQG}.  
(The main result of \cite{buckShkVicSQG} is stated in terms of $\La^{-1} \th$ rather than $\La^{-1/2} \th$, but in terms of spatial regularity the results are equivalent.)  

The main contributions of our work are as follows:
\begin{enumerate}
\item  One of the main ideas of \cite{buckShkVicSQG} to apply convex integration to the SQG equation is to recast the SQG equation in the following {\it momentum form} for the unknown $v = \La^{-1} u = \nb^\perp \De^{-1} \th$:
\ALI{
	\pr_t v + u \cdot \nb v - (\nb v)^T \cdot u = - \nb p, \qquad \mbox{div } v = 0, \qquad u = \La v.
}
The authors are then able to perform convex integration at the level of the vector field $v$ rather than the scalar field $\th$.  The scalar field $\th$ can then be recovered from $v$ by setting $\th = - \nb^\perp \cdot v$.  

In this work, we take a different approach, working directly at the level of the scalar field $\th$.  To execute this approach, we require the error term in the construction to have the structure of a second order divergence, as opposed to the first order divergence form used in \cite{isettVicol}.  We also employ the ``constant trick'' from \cite{corFarGanPor,shvConvInt, isettVicol} that the divergence of a function of $t$ is zero.

Given that the Onsager conjecture for SQG remains open, it is useful to have more than one approach to constructing SQG solutions.  Having a second approach gives a separate angle from which to consider the problem and opens the door to considering more general active scalar equations.  This approach might also be useful for the still open Onsager conjecture for 2D Euler.  Furthermore, it is desirable from a physical point of view to have an approach that works directly at the level of $\th$, since the variable $\th$ has a clear physical meaning.  We also note there is yet a different approach to convex integration for SQG that has been obtained independently in \cite{chengLiKwon}.  This work proceeds at the level of $\La^{-1} \th$ and considers the steady state SQG equation.  

\item  While typically theorems on the failure of compactness can be obtained as essentially a byproduct of a successful convex integration scheme, the argument of \cite{buckShkVicSQG} is not quite structured towards proving an h-principle result.  Our proof of Theorem~\ref{thm: h-principle} involves taking a different organizational strategy in the estimates compared to \cite{buckShkVicSQG}.  The proof of the h-principle also involves an additional approximation step compared to similar results in \cite{isettVicol, IOnonpd}, as the convex integration scheme implemented here is required to have compact frequency support.  
\item  A key idea of \cite{buckShkVicSQG} is to express the nonlinearity $u \cdot \nb v - (\nb v)^T \cdot u$ as the sum of the divergence of a 2-tensor and the gradient of a scalar function to have good control over high-high to low interactions.  Here we show that the desired divergence form can be obtained for the nonlinearity $\th \nb^\perp \La^{-1} \th$ of SQG, and we extend the derivation so that it applies to general odd multipliers.  
\item Finally, we employ a ``bilinear microlocal lemma'' analogous to \cite[Lemma 4.1]{isettVicol}.  This lemma allows us to obtain a better estimate on the low frequency part of the error compared to the corresponding technique in \cite{buckShkVicSQG}, which relies on the use of sharp time cutoffs to approximate nonlinear phase functions by their initial conditions.  Having such an improved estimate is needed for schemes that aim to improve the H\"{o}lder regularity.  The estimate we obtain is compatible with an ideal case scenario for SQG, so that the only terms now limiting the regularity below the conjectured threshold are the high-frequency interference terms.
\end{enumerate}

The statement of Theorem~\ref{thm: h-principle} on the h-principle helps clarify an important point about the method of convex integration, which is that the method appears so far to apply only in cases where an appropriate h-principle implying failure of compactness can be proven \cite{deLSzeHFluid}.  Namely, Theorem~\ref{thm: h-principle} shows that even though weak compactness does hold for SQG in $L^\infty$, which prevents the schemes of \cite{shvConvInt, isettVicol} for active scalar equations from applying to SQG, there is still failure of compactness in the space $\La^{-1/2} \th \in L^\infty$, thus permitting the scheme of \cite{buckShkVicSQG} and the present work to succeed.  The statement of Theorem~\ref{thm: h-principle} is modeled off h-principles proven in \cite{isettVicol, IOnonpd}, which also relate weak limits of solutions to conservation laws.  In general, ``h-principles'' in PDE are modeled off the result of Nash \cite{nashC1} that $C^0$ limits of isometric immersions of closed $n$-manifolds into $\R^{n+2}$ can realize any smooth, short immersion.  For further examples of h-principles in fluids, we refer to \cite{choff, danSze, buckDeLSVonsag}.

\paragraph{Acknowledgements}  The first author is supported by a Sloan fellowship, and acknowledges the support of the NSF under NSF-DMS 1700312.  The second author acknowledges NSF support under the UT Austin RTG grant in analysis, NSF-DMS 1840314. 

\section{SQG-Reynolds flows}
To state the Main Lemma we will introduce a notion of an SQG-Reynolds flow.  The key difference compared to \cite{buckShkVicSQG} is that we work at the level of the scalar field $\th$, and the error tensor is required to have a double-divergence form.  We will consistently employ the summation notation for repeated indices.
\begin{defn}\label{def: SQG}
	A scalar-valued function $\th: \R \times \T^2 \rightarrow \R$ and a symmetric, traceless tensor field $R^{jl}: \R \times \T^2 \rightarrow \R^{2 \times 2}$ satisfy the \textbf{SQG-Reynolds Equations} if 
	\ALI{
	\pr_t \th + u^l \nb_l \th &= \nb_j \nb_l R^{jl}\\
\mbox{div } u =	\nb_l u^l &= 0\\
	u^l & = T^l \th,
	}
where the operator $T$ is given as in \eqref{eq:SQG} by the Fourier multiplier $m^l(\xi) \coloneqq i \ep^{la} \xi_a / |\xi|$ for $\xi \in \hat{\R}^2$. Here $\ep^{la}$ is the Levi-Civita symbol in two dimensions. By convention this symbol is defined as follows
\begin{equation}\label{def:levi-cevita}
\ep^{11} = \varepsilon^{22} = 0 \quad , \quad
\ep^{12} = 1 \quad , \quad
\ep^{21} = -1
\end{equation} 
Any solution to the SQG-Reynold's equation, $(\th, u, R)$, is called an \textbf{SQG-Reynolds Flow}. The symmetric and traceless tensor field $R^{jl}$ is called the \textbf{stress tensor}. 

At times we will write $(\th, R)$ to refer to the SQG-Reynolds flow, implying that $u^l = T^l[\th]$.
\end{defn}
\subsection{Frequency and Energy Levels}
We will also use a notion of frequency-energy levels similar to those used with the Euler equations in \cite{isett} but with the difference that we assume $\th$ to be frequency localized similar to \cite{buckShkVicSQG}.  A key point is that the relevant fields are measured relative to the size of the stress tensor $R^{jl}$.
\begin{defn}\label{def: freq and energy levels}
	Let $(\th, u, R)$ be a solution of the SQG-Reynolds equation, $\Xi \geq 1$, and $\D_u \geq \D_R \geq 0$ be non-negative numbers. Define the \textbf{advective derivative} with respet to $\th$ to be $D_t \coloneqq \pr_t + T^l \th \nb_l$. We say that $(\th, u, R)$ has \textbf{frequency-energy levels} below  $(\Xi, \D_u, \D_R)$ to order $L$ in $C^0$ if $\th$ and $R$ are of class $C_t^0 C_x^L(\R \times \T^2)$ and the following statements hold
\ALI{
	\supp \hat{\th} &\subseteq \{ \xi: |\xi| \leq \Xi\}\\
	\co{\nb_{\va}\th} , \co{\nb_{\va}u} &\leq \Xi^{|\va|}e_u^{1/2} & \text{for all }|\va | = 0 , \ldots, L\\
	\co{\nb_{\va}R} &\leq \Xi^{|\va|}\D_R & \text{for all }|\va | = 0  , \ldots, L \\
	\co{\nb_{\va}D_t\th}, \co{\nb_{\va}D_t u} &\leq \Xi^{|\va|} (\Xi e_u^{1/2}) e_u^{1/2} & \text{for all }|\va | = 0 ,\ldots, L - 1 \\
	\co{\nb_{\va}D_t R} &\leq \Xi^{|\va|} (\Xi e_u^{1/2}) \D_R & \text{for all }|\va | = 0 , \ldots, L - 1 
}
The quantity $(\Xi e_u^{1/2})^{-1}$ is the \textbf{natural time scale}, the term $e_u^{1/2} \coloneqq \Xi^{1/2}\D_u^{1/2}$, and $\nb$ refers only to derivatives in the spatial variables. We also define a second quantity $e_R \coloneqq \Xi \D_R$ that, like $e_u$, we think of as having units of ``energy density''. 
\end{defn}
\section{Main Lemma}
\begin{lem}[Main Lemma] \label{lem: main}
For $L \geq 2$ there exists a constant $\hc$ such that the following holds: Given an SQG-Reynolds flow $(\th, u, R)$ with frequency and energy levels below $(\Xi, \D_u , \D_R)$ and a non-empty closed interval, $J$, with $\suppt R \subseteq J \subseteq \R$. Let
\[
	N \geq 
	\left (
	\frac{\D_u}{\D_R}	
	\right )
\]
Then there exists an SQG-Reynolds flow $(\ost \th, \ost u, \ost R)$ of the form $\ost \th = \th + \Theta, \ost u = u + T[\Theta]$ with frequency and energy levels bounded by
\[
	(\ost \Xi, \ost \D_u, \ost \D_R) = (\hc N \Xi, \D_R, G \D_R ) , \quad 
	\text{where }G \coloneqq 
	\left(
	\frac{\D_u^{1/4}}{\D_R^{1/4} N^{3/4}}
	\right)
\]
to order $L$ in $C^0$. Furthermore the new stress $\ost R$ and the correction $\Theta$ are supported in time in a neighborhood of $J$
\begin{equation}
	\label{eq: time support growth}
	\suppt \ost R \cup \suppt \Theta \subseteq N(J) \coloneqq
	\{
		t+h : t \in J, |h| \leq 3\nat^{-1}
	\} 
\end{equation}
Additionally one may arrange that $\La^{-1/2}\Theta$ has the form $\La^{-1/2}\Theta = \nb_i W^i$ that satisfies the following statements
\ALI{
	\co{\nb_{\va} \Lambda^{-1/2}\Theta} & \leq \hc (N\Xi)^{|\va|}\D_R^{1/2} , \quad \text{for }|\va| \leq 1 \\
	\co{W} & \leq \hc (N\Xi)^{-1} \D_R^{1/2} \\
	\co{\pr_t \La^{-1/2}\Theta} & \leq \hc \D_R^{1/2} (N\Xi e_u^{1/2})
}
where $\eu$ is defined in Definition \ref{def: freq and energy levels}.  
\end{lem}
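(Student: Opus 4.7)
The plan is to build the correction $\Theta$ as a superposition of microlocal wave packets at frequency scale $\la \sim N\Xi$, following the Nash--De Lellis--Sz\'{e}kelyhidi convex integration paradigm adapted to active scalar equations in \cite{isettVicol, buckShkVicSQG}. First I would partition the time support $J$ into sub-intervals of length comparable to the natural time scale $(\Xi \eu)^{-1}$ with a subordinate partition of unity $\{\chi_I(t)\}$, mollify the stress $R^{jl}$ in space and time at these scales to obtain a smoothed $R_\ell^{jl}$, and on each sub-interval $I$ construct a finite family of transported phase functions $\phi_{I,k}(x,t)$ solving $D_t \phi_{I,k}=0$ with initial data $\xi_k \cdot x$ at the center time of $I$, where $\{\xi_k\}$ is a symmetric finite set of reference directions. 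The ansatz then takes the form
\begin{equation*}
\La^{-1/2}\Theta = \nb_i W^i, \qquad W^i = \sum_{I,k} \chi_I(t)\, \eta_k(\nb \phi_{I,k})\, a_{I,k}(x,t)\, e^{i \la \phi_{I,k}(x,t)}\, \vec{c}^{\,i}_{I,k},
\end{equation*}
where the angular cutoff $\eta_k$ guarantees that $\Theta$ is spectrally supported in an annulus of radius $\sim N\Xi$, so that $\ost\Xi = \hc N\Xi$ holds automatically.

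Next I would select the amplitudes $a_{I,k}$ and vectors $\vec c^{\,i}_{I,k}$ algebraically so that the self-coupling $T\Theta \cdot \nb \Theta$, summed over conjugate pairs $(k,-k)$, produces a low-frequency contribution that matches $\nb_j \nb_l R_\ell^{jl}$. This is the analogue of the geometric lemma in previous convex integration schemes, but here the cancellation must be realized in second-order divergence form at the level of $\th$; carrying this out for the general odd multiplier $m^l(\xi)$ is the ingredient advertised in contribution (3) of the introduction and relies on the anti-self-adjointness of $T$. The residual error then splits into: (a) the transport error $D_t \Theta$; (b) the linear interaction $u \cdot \nb \Theta + T\Theta \cdot \nb \th$; (c) the mollification error $\nb_j\nb_l(R^{jl} - R_\ell^{jl})$; and (d) the high-high interference from pairs $(k,k')$ with $\xi_k + \xi_{k'} \neq 0$. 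Each of these is spectrally supported away from the origin by the compact frequency support of $\th$ and of the packets, so I would define $\ost R^{jl}$ by inverting $\nb_j \nb_l (\cdot)$ via a bounded symbol-level operator onto symmetric traceless tensors on the appropriate spectral annulus.

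The main obstacle will be the sharp bound on the low-frequency component of the self-interaction in (d). Interference from pairs $(k,k')$ with $\xi_k + \xi_{k'}$ small but nonzero produces terms whose double-divergence inverse is large under naive estimation. To control these I would employ a bilinear microlocal lemma in the spirit of \cite[Lemma 4.1]{isettVicol}, exploiting the fact that the sum of conjugate transported phases $\phi_{I,k} + \phi_{I,-k}$ vanishes at the center of $I$ and has advective derivative controlled by $(\Xi \eu)$, so that symbol-level cancellation upgrades the pointwise estimate. This is the step that permits the new stress level $\ost \D_R = G \D_R$ with $G = \D_u^{1/4}/(\D_R^{1/4} N^{3/4})$, which in turn, combined with the frequency-support constraint, saturates the H\"{o}lder thresholds $\a < 3/10$ and $\b < 1/4$ in Theorems \ref{thm:existence} and \ref{thm: h-principle}. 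The support inclusion \eqref{eq: time support growth} follows from choosing each $\chi_I$ supported in the $3(\Xi \eu)^{-1}$-neighborhood of $J$, and the pointwise bounds on $\La^{-1/2}\Theta$, $W^i$, and $\pr_t \La^{-1/2}\Theta$ follow by direct differentiation of the ansatz using $\pr_t \phi_{I,k} = -u \cdot \nb \phi_{I,k} = O(\eu)$.
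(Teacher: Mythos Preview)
Your outline has the right overall architecture, but it misidentifies both the limiting error term and the mechanism that produces the factor $G = \D_u^{1/4}\D_R^{-1/4}N^{-3/4}$, and as written it would not close.

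First, the time step. You propose to partition $J$ into intervals of length $\sim (\Xi\eu)^{-1}$. The paper instead uses $\tau = b(\Xi\eu)^{-1}$ with the small parameter $b \sim (\D_u^{1/2}/(\D_R^{1/2}N^{3/2}))^{1/2}$, chosen precisely to balance the transport error $R_T$ (which scales like $\la^{-3/2}\D_R^{1/2}(\Xi\eu)b^{-1}$) against the high-frequency interference error $R_H$ (which scales like $b\D_R$). It is this optimization, not the bilinear microlocal lemma, that produces $\ost\D_R = G\D_R$; with your time step ($b=1$) the interference term would only be $O(\D_R)$ and the lemma would fail.

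Second, the interference mechanism. You locate the difficulty in pairs with $\xi_k + \xi_{k'}$ ``small but nonzero'' and plan to treat them with the bilinear microlocal lemma. In the paper's direction set $F=\{\pm(1,2),\pm(2,1)\}$ the non-conjugate sums are all of unit size, so this is not the issue; the oscillation at scale $\la$ already gains $\la^{-2}$ from the second-order anti-divergence. The genuine obstruction is that the \emph{amplitude} of the leading interference term is naively $O(\la\D_R)$, which after dividing by $\la^2$ only gives $O(\D_R)$. The paper extracts an extra factor of $b$ from a specific algebraic cancellation: writing $u_I^l = i\th_I\ep^{la}\nb_a\xi_I/|\nb\xi_I|$ and using $\ep^{la}(\nb_a\xi_I\nb_l\xi_J + \nb_l\xi_I\nb_a\xi_J)=0$, one rewrites the principal amplitude so that it is proportional to $|\nb\xi_I|^{-1}-|\nb\hat\xi_I|^{-1}$, which is $O(b)$ because the phase gradients drift by at most $b$ over the short time interval $\tau$. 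This step depends on the choice of initial directions having equal length and on the precise form of the SQG multiplier; it is absent from your sketch.

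The bilinear microlocal lemma is used, but for the \emph{conjugate} pairs in the low-frequency error $R_S$ (the residual after the stress cancellation), where it yields the better bound $N^{-1}\D_R$. That term is therefore not limiting; the paper remarks that it is ``compatible with an ideal case scenario,'' and the regularity threshold is set by $R_T$ and $R_H$.
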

\section{Proof of Main Lemma}
\subsection{Shape of the Scalar Corrections}\label{sec: corrections}
Our correction $\Theta$ is a sum of scalar valued waves $\Theta_I$ that oscillate at a large frequency $\la$, similar to the corrections defined in \cite[Section 4.2]{isett2017nonuniqueness} or \cite[Section 5.1]{isettVicol}. 
\begin{equation}
	\label{def: scalar correction}
	\Theta \coloneqq \sum_I \Theta_I(x, t) , \quad
	\Theta_I(x, t) \coloneqq P_\la^I \left (e^{i\la \xi_I}\th_I \right )
	=
	e^{i\la \xi_I}\left( \th_I  + \delta 
	\th_I \right ).
\end{equation}
The \textbf{frequency parameter} $\la \in 2 \pi \N$ is on the order of $B_\la N\Xi$.  More precisely,  
\[
	\la \in [ B_\la N\Xi, 2B_\la N \Xi] \cap (2 \pi \N)
\]
where $B_\la \geq 1$ a very large constant associated to $\la$ that is chosen in Section \ref{sec: verifying the main lemma}. The term $\xi_I \coloneqq \xi_I(x, t)$ is a non-linear \textbf{phase function}, $\th_I \coloneqq \th_I(x, t)$ is the \textbf{amplitude}, and $\delta \th_I \coloneqq \delta \th_I(x, t)$ is a small correction to ensure that $\Theta_I$ is compactly supported in frequency space.  
The second equality for $\Theta_I$ and the explicit formula for $\delta \th_I$ comes from an application of the Microlocal Lemma \ref{lem: microlocal lemma} while the other components of the correction are specified below. 
\subsubsection{The Index Set} \label{sec:Index set}
The index $I$ for the wave $\Theta_I$ is a tuple $I = (k, f) \in \Z \times F$ where $F \coloneqq \{ \pm(1, 2), \pm (2, 1)\}$. Furthermore we define the set $\F \coloneqq F / (+, - ) =   \{ (1, 2), (2, 1)\}$. The index $k$ specifies the interval of time support for the wave $\Theta_I = \Theta_{(k, f)}$ and $f$ represents an initial direction of oscillation.  For each index $I = (k, f) \in \Z \times F$ there is a conjugate index $\bar{I} = (k , \bar{f}) \in \Z \times F$ such that in the summation \eqref{def: scalar correction} each wave $\Theta_I$ has a conjugate wave $\Theta_{\overline{I}} \coloneqq \overline{\Theta}_I$ with an opposite sign phase function $\xi_{\overline{I}} \coloneqq -\xi_I$ and matching amplitude $\th_{\overline{I}} \coloneqq \overline{\th}_I$ so that the overall summation \eqref{def: scalar correction} is real-valued. We also define the notation 
\[
	[k] \coloneqq
	\begin{cases}
		0 & \text{if } k \mod 2 \equiv 0\\
		1 & \text{if } k \mod 2 \equiv 1\\
	\end{cases}	
\]
\subsubsection{Phase functions}\label{sec:phase functions}
For a given SQG-Reynold's flow $\theta$ and each index $I = (k, f)$, the phase functions $\xi_I$ are solutions to the transport equation
\[
	(\pr_t + T^l[\theta]\nb_l ) \xi_I = 0 \quad , \quad
	\xi_I(t(I), x) = \hat{\xi}_I(x)
\]
where the initial data is 
$
	\hat{\xi}_I(x) = \hat{\xi}_{(k, f)}(x) \coloneqq J^k f\cdot x
$
for $J \coloneqq$ a $90^\circ$ rotation. The initial time is $t(I) \coloneqq k\tau$ where $\tau$ is the time step from Section \ref{sec:Time cutoffs}. Furthermore, for a small fixed constant $c_2 > 0$  the time step $\tau$ in Section \ref{sec:Time cutoffs} will be sufficiently small to ensure the phase function gradients 
stay close to their initial data, and stay bounded away from 0 
\begin{equation}
	\label{eq: phase function conditions}
	\left | \nb \xi_I - \nb \hat{\xi}_I \right | \leq c_2, \quad
	\left | \nb \xi_I \right | \geq 1 > 0.
\end{equation}
The constant $c_2$ is determined later by calculation in \eqref{def: c_2}.  It is taken to be sufficiently small to ensure that the phase gradients remain close to their initial conditions and stay in a neighborhood where the angular frequency localizing operator described in Section \ref{sec:freq localizing op} is 1.  In particular $c_2 < 1/4$. 
\subsubsection{Time cutoffs}\label{sec:Time cutoffs}
Define a family of time cutoffs similar to what is done in \cite[Section 5.2]{isettVicol}. For a given index $I = (k, f)$ define the \textbf{time step} $\tau \coloneqq \nat^{-1}b$ where $b$ is a small fraction defined by $b \coloneqq \left ( \frac{\D_u^{1/2}}{\D_R^{1/2}B_\la^{3/2} N^{3/2}}\right )^{1/2}b_0$ and $b_0 \leq 1$ is a uniform constant choosen to make $\tau$ sufficiently small enough to satisfy the conditions of \eqref{eq: phase function conditions}. Note that $\tau$ is smaller than the natural time scale given in Section \ref{def: freq and energy levels}. Additionally, we define time cutoff functions as elements of a rescaled partition of unity in time
\[
	\phi_k(t) \coloneqq \phi \left( \frac{t - k\tau}{\tau}\right), \quad \text{where } \sum_{u \in \Z}\phi^2(t - u) = 1
\]
By our choice of $\phi_k$ each scalar correction $\Theta_I$ will be supported in a time interval $[k\tau - \frac{2}{3}\tau, k\tau + \frac{2}{3}\tau]$. The size of the time interval is choosen to optimize the size of the resulting errors between the transprort stress and high frequency interference stress.
\subsubsection{Angular Frequency Localizing Operator}\label{sec:freq localizing op}
The operator $P_\la^I$ localizes to frequencies of order $\la$ in a neighborhood around $\la \nb \hat{\xi}_I$. This is identical to the frequency localizing operator used in \cite[Section 5.1]{isettVicol}. More precisely, for a function $f \in C^0(\T^2)$ and an index $I$ we define frequency localizing operators in the following way:
\[
	P^I_{ \la} f \coloneqq \int_{\R^2}f(x - h)\chi_{ \la}^I(h) dh
\]
where $\hat{\chi}^I_\la(\xi) \coloneqq \hat{\chi}^I_1(\la^{-1} \xi)$ and we take $\hat{\chi}^I_1 \in C_c^\infty( B_{|\nb \hat{\xi}_I |/2}( \nb \hat{\xi}_I ))$ to be a smooth bump function such that $ \hat{\chi}^I_1 (\xi) =1$ if $|\xi - \nb \hat{\xi}_I| \leq |\nb \hat{\xi}_I | /4$. Moreover it follows from this definition that the corrections $\Theta_I$ and $\Theta$ have frequency support contained in the annulus of order $\la$. That is
\[
	\supp \hat{\Theta}_I \cup \supp \hat{\Theta} \subseteq \{ \xi \in \hat{\R}^2 : \la/2 \leq |\xi | \leq 2\la \}
\] 
\subsubsection{Lifting Function}\label{sec: lifting function}
A lifting function is used in the calculation of the low-frequency stress error in Section \ref{sec: algebra of the cancellation} below. Its purpose is to scale the wave corrections so that they are larger than the low-frequency stress error that is approximately of size $\D_R$. Let $\tau$ be the time step from Definition \ref{def: freq and energy levels} and let $K$ be a large constant to be determined later in the calculation \eqref{def: K}.  We choose $e(t): \R \rightarrow \R_{\geq 0}$ to be a function such that $e(t) \geq K \D_R$ for all $t \in \{ s + h | s \in J, |h| \leq  \ep_t + \nat^{-1}\}$, where $\ep_t$ is the flow mollification parameter defined below in \eqref{def: flow mollification parameters}, such that the bounds 
\[
	\left \| \left( \frac{d}{dt}\right)^r e(t)^{1/2}\right \|_{C^0} \leqc \nat^r \D_R^{1/2}, \quad 0 \leq r \leq 2
\]
hold, and such that $e$ has time support $\supp_t e(t) \subseteq N(J) \coloneqq \{ t+h : t \in J, |h| \leq 3\nat^{-1}\}$.  A simple way to construct $e(t)$ is to define $N_0(J) = \{ t+h : t \in J, |h| \leq 2\nat^{-1}\}$, take the characteristic function of $N_0(J)$, mollify in time with a non-negative convolution kernel $\eta_{\nat^{-1}}$ supported in $\{ |t| \leq (1/3)\nat^{-1} \}$, and multiply by $(K \D_R)^{1/2}$ to construct $e^{1/2}(t)$:
\ALI{
e^{1/2}(t) &= (K \D_R)^{1/2} \eta_{\nat^{-1}} \ast {\bf 1}_{N_0(J)}(t).
}
We remark that the last time support condition holds because $\ep_t \leq \nat^{-1}$, which is verified in Section \ref{sec: verifying the main lemma} below. 
\subsubsection{Amplitudes}\label{sec: amplitudes}
Given an index $I = (k, f)$, the amplitude $\th_I(x,t)$ has the form 
\begin{equation}\label{def: theta_I}
	\th_I(x, t) =  \la^{1/2}\gamma_I(x, t) \phi_k(t) e^{1/2}(t).
\end{equation}
The factors $e(t)$ and $\phi_k(t)$ are the previously defined lifting function and time-cutoff. The functions $\gamma_I: \T^2 \times \R \rightarrow \R$ are called the {\bf coefficients}, and are specified in Section~\ref{sec: algebra of the cancellation}.
In particular $\gamma_I$ depends on the choice of initial directions $\F$ from \ref{sec:Index set} and we construct it to be close in absolute value to $1$ for the duration of the time interval $\phi_k(t)$ is supported on.
\subsection{Shape of the Velocity Corrections}\label{sec: velocity corrections}
By applying the Microlocal Lemma \ref{lem: microlocal lemma} with the convolution operator $T^l P_\la^I$, we can calculate the drift velocity correction $U_I \coloneqq U_I(x, t): \T^2 \times \R \rightarrow \R^2$ from $\Theta_I$:  
\[
	U_I^l \coloneqq T^l\Theta_I 
	=
	T^l P_\la^I \left[ e^{i\la \xi_I}\th_I \right] \\
	=  e^{i\la \xi_I} \left( \th_I m^l(\nb \xi_I) + \delta u_I^l \right).\\
\]
We define
\[
	U_I^l \coloneqq e^{i\la \xi_I} \left( u^l_I + \delta u_I^l \right) , \quad
	u_I^l \coloneqq \th_I m^l(\nb \xi_I)
\]
with an explicit error term $\delta u_I^l$ given by the Microlocal Lemma \ref{lem: microlocal lemma}.  Here we have used that $m^l$ is homogeneous of degree zero to obtain $m^l(\la \nb \xi_I) = m^l(\nb \xi_I)$, and have used that the frequency cutoff in the symbol of $P_\la^I$ is equal to $1$ at the point $\la \nb \xi_I$, due to the condition $|\nb \xi_I - \nb \hat{\xi}_I| \leq c_2$ in \eqref{eq: phase function conditions}.

\subsection{The Microlocal Lemma}
We borrow the following lemma from \cite[Section 4]{isettVicol}, which shows that a convolution operator applied to a highly oscillatory function is multiplication operator to leading order. In all of our applications the kernel $K^l(h)$ will be a Schwartz function essentially supported online scales of order $|h| \sim \la^{-1}$.  The Fourier-transform of a function $K^l:\R^2 \rightarrow \C$ is normalized to be $\hat{K}^l(\xi) = \int_{\R^2}e^{-i\xi \cdot h}K^l(h)dh$. 
\begin{lem}[Microlocal Lemma]\label{lem: microlocal lemma}
	Suppose that $T^l[\Theta](x) = \int_{\R^2}\Theta(x - h)K^l(h)dh$ is a convolution operator acting on functions $\Theta : \T^2 \rightarrow \C$ with a kernel $K^l: \R^2 \rightarrow \C$ in the Schwartz class. Let $\xi : \T^2 \rightarrow \C$ and $\th : \T^2 \rightarrow \C$ be smooth functions and $\la \in \Z$ be an integer. Then for any input of the form $\Theta = e^{i\la \xi(x)}\theta(x)$ we have the formula 
\[
	T^l[\Theta](x) = 
	e^{i\la \xi(x)}
	\left ( 
	\theta(x) \hat{K}^l(\la \nb \xi(x) ) + \delta [T \Theta])(x)
	\right )
\]	
where the error in the amplitude term has the explicit form
\ALI{
	\delta[T^l\Theta](x) &= 
	\int_0^1 dr \frac{d}{dr} \int_{\R^2}e^{-i\la \nb \xi(x)\cdot h}e^{iZ(r,x,h) )}\theta(x - rh)K^l(h)dh\\
	Z(r,x,h) &= r\la \int_0^1 h^ah^b\pr_a\pr_b \xi(x - sh)(1 - s)ds
}
\end{lem}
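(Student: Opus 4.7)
The proof is a direct Taylor-expansion-plus-fundamental-theorem-of-calculus argument, and I would organize it in four short steps.

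First, I would substitute the ansatz into the convolution to obtain
\[
T^l[\Theta](x) = \int_{\R^2} e^{i\la \xi(x-h)}\,\theta(x-h)\, K^l(h)\,dh,
\]
and then isolate the phase by writing $\la\xi(x-h) = \la\xi(x) - \la h\cdot\nb\xi(x) + Z(1,x,h)$. The identity for the quadratic remainder $Z(1,x,h)$ comes from Taylor's theorem with integral remainder applied to $g(s) = \la\xi(x-sh)$: since $g'(0) = -\la h\cdot\nb\xi(x)$ and $g''(s) = \la h^a h^b\pr_a\pr_b\xi(x-sh)$, one has $g(1) = g(0) + g'(0) + \int_0^1 (1-s)g''(s)\,ds$, which gives exactly the claimed $Z(1,x,h)$.

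Second, I would factor the phase $e^{i\la\xi(x)}$ out of the integral to arrive at
\[
T^l[\Theta](x) = e^{i\la\xi(x)}\int_{\R^2} e^{-i\la h\cdot \nb\xi(x)}\, e^{iZ(1,x,h)}\,\theta(x-h)\,K^l(h)\,dh.
\]
The leading-order multiplier $\theta(x)\hat K^l(\la\nb\xi(x))$ corresponds to replacing $e^{iZ(1,x,h)}$ by $1$ and $\theta(x-h)$ by $\theta(x)$, since $\int_{\R^2} e^{-i\la h\cdot \nb\xi(x)} K^l(h)\,dh = \hat K^l(\la\nb\xi(x))$ by the Fourier-transform normalization. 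Noting that this replacement is exactly the value at $r=0$ of the $r$-dependent integral $F(r,x) \coloneqq \int_{\R^2}e^{-i\la h\cdot\nb\xi(x)}e^{iZ(r,x,h)}\theta(x-rh)K^l(h)\,dh$ (because $Z(0,x,h)=0$ and $\theta(x-0\cdot h)=\theta(x)$), I would write the full integral, which is $F(1,x)$, as $F(0,x) + \int_0^1 \partial_r F(r,x)\,dr$.

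Third, this rewriting gives the asserted formula
\[
T^l[\Theta](x) = e^{i\la\xi(x)}\bigl(\theta(x)\hat K^l(\la\nb\xi(x)) + \delta[T^l\Theta](x)\bigr),
\]
with $\delta[T^l\Theta](x) = \int_0^1 \tfrac{d}{dr} F(r,x)\,dr$, which is exactly the stated remainder.

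Finally, I would justify the interchanges of integration and differentiation needed to carry out steps two and three by invoking the Schwartz decay of $K^l$: the kernel $K^l$ and all its derivatives decay faster than any polynomial, while the factors $e^{iZ(r,x,h)}$, $e^{-i\la h\cdot\nb\xi(x)}$, and the polynomial growth of $Z$ and $\theta(x-rh)$ in $h$ are controlled by this decay uniformly in $r\in[0,1]$. There is no real obstacle here; the only thing to be careful about is keeping track of signs in the Taylor remainder and confirming that the frequency-cutoff and smoothness assumptions on $\xi$ and $\theta$ make the pointwise formula well-defined. The computation is essentially bookkeeping, so I expect no serious difficulty.
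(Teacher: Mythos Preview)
Your proposal is correct and takes essentially the same approach as the paper: the paper itself does not prove the lemma but cites \cite{isettVicol}, and the sketch given for the closely related Bilinear Microlocal Lemma confirms that the argument is exactly the Taylor-expansion-plus-fundamental-theorem-of-calculus computation you outline. Your identification of $F(0,x)$ with the leading term and $F(1,x)$ with the full integral via the interpolation $r\mapsto e^{iZ(r,x,h)}\theta(x-rh)$ is precisely the intended mechanism.
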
 
The proof of this lemma is given in \cite[Section 4]{isettVicol}.
\subsection{Mollification of the Stress Tensor}
\label{sec: Mollification}
For the stress tensor, $R$, we define its spatial mollification $R_{\ep_x}$ by $R^{jl}_{\ep_x} \coloneqq \chi_{\epsilon_x} \ast \chi_{\epsilon_x} \ast R^{jl}$, with a Schwartz kernel at length scale $\ep_x$ so that $\chi_{\epsilon_x}(h) = \ep_x^{-2}\chi_1(\ep_x^{-1}h)$ where $\chi_1: \R^2 \rightarrow \R$ is a Schwartz kernel with integral 1 with a vanishing moment condition that is $\int_{\R^2}h^{\va} \chi_{\epsilon_x}(h)dh = 0$ for all multi-indices $1 \leq |\va | \leq L$. We next define the mollified stress tensor $R_\epsilon$ by a mollification of $R_{\epsilon_x}$ along the coarse scale flow in time:
\[
	R_\epsilon^{jl} = 
	\eta_{\epsilon_t} \ast_\Phi R^{jl}
	\coloneqq
	\int_\R R^{jl}_{\epsilon_x}(\Phi_s(t, x)) \eta_{\epsilon_t}(s)ds
\]
Where $\eta_{\epsilon_t}(s) = \epsilon_t^{-1}\eta_1(\epsilon_t^{-1}s)$ is a smooth, positive mollifying kernel with compact support in the interval $|s| \leq \epsilon_t \leq \nat^{-1}$ and $\int_\R \eta_{\epsilon_t}(s) ds = 1$, while $\Phi_s(t, x)$ is the flow map of $\pr_t  + u \cdot \nb$, which takes values in $\R \times \T^2$ and is defined as the unique solution to the ODE
\[
	\Phi_s(t, x) = (t + s, \Phi^i_s(t, x)), \quad 
	\Phi_0(t, x) = (t, x), \quad
	\frac{d}{ds}\Phi^i_s(t, x) = u^i(\Phi_s(t, x)).
\] 
We choose the mollification parameters to be
\begin{equation}\label{def: flow mollification parameters}
	\epsilon_x = c_0N^{-\fr{3}{2L}}\Xi^{-1}, \quad
	\epsilon_t = c_0 N^{-\fr{3}{2}}\Xi^{-3/2}\D_R^{-1/2}
\end{equation}
By this choice of parameters we may obtain bounds that are similar to the bounds given in \cite{isett}. Finally, the constant $c_0$ will only depend on the Schwartz kernels $\eta_1, \chi_1$. We state $C^0$ norm estimates for $R_\ep$ in Section \ref{sec: basic estimates} below.  
\subsection{The Error Terms} \label{sec: Error terms}
Let $(\theta, u, R)$ be an SQG-Reynolds flow as defined in \ref{def: SQG}. We will construct the new  scalar field $\ost \theta \coloneqq \theta + \Theta$ and $\ost u = T[\ost \theta]$ by adding a high frequency correction $\Theta$. The new Reynolds stress solves 
\[
	\nb_j \nb_l \ost R^{jl} = \nb_j \nb_l R^{jl}_\epsilon + \nb_j \nb_l \left( R^{jl} - R^{jl}_\epsilon \right )+ \left ( \pr_t \Theta + T^l \theta \nb_l \Theta + T^l \Theta \nb_l \theta \right ) + T^l \Theta \nb_l \Theta
\]
If we recall that the corrections are of the form $\Theta = \sum_I \Theta_I$ then we can decompose the new error into the following terms
\ALI{
	\nb_j \nb_l \ost{R}^{jl} &= \nb_j \nb_l \left ( R_T^{jl} + R_H^{jl} + R_M^{jl} + R_S^{jl} \right ) \\
	\text{where }
	\nb_j \nb_l R_H^{jl} &= \sum_{J,I \in \Z \times F, J \neq \overline{I}} T^l [\Theta_I] \nb_l\Theta_{J} + T^l [\Theta_J] \nb_l\Theta_{I} \\
	\nb_j \nb_l R_T^{jl} &= \pr_t \Theta + T^l \theta \nb_l \Theta + T^l \Theta \nb_l \theta \\
	\nb_j \nb_l R_M^{jl} &= \nb_j \nb_l \left( R^{jl} - R^{jl}_\epsilon \right )\\
	\nb_j \nb_l R_S^{jl} &=  \nb_j \nb_l R^{jl}_\epsilon + \nb_l \sum_{I \in \Z \times F} T^l [\Theta_I] \Theta_{\overline{I}} 	
}
We refer to the term $R_S$ as the \textbf{Low frequency stress error}, $R_T$ as the \textbf{Transport stress error}, $R_H$ as the \textbf{High frequency interference stress error}, and $R_M$ as the \textbf{Mollification stress error}. These terms are symmetric $(2, 0)$-tensors. Let $\RR$ be the second order anti-divergence operator defined in \ref{sec:Second order anti-div}, let $B_\la$ be the bilinear anti-divergence operator defined in \eqref{def: Bilinear form}, and let $M_{[k]}$ be the constant matrix defined in \eqref{def: M_k}. We can define the stress errors using these tools as 
\begin{align}
\label{def: New R}
\ost{R}^{jl} &\coloneqq R_H^{jl} + R_T^{jl} + R_M^{jl} + R_S^{jl}  \\
\label{def: R_H}
R_H^{jl} &\coloneqq \sum_{J,I \in \Z \times F, J \neq \overline{I}} \RR^{jl}\left [T^l [\Theta_I] \nb_l\Theta_{J} + T^l [\Theta_J] \nb_l\Theta_{I}\right ] \\
\label{def: R_T}
R_T^{jl} &\coloneqq \RR^{jl} \left [ \pr_t \Theta + T^l \theta \nb_l \Theta + T^l \Theta \nb_l \theta \right  ] \\
\label{def: R_M}
R_M^{jl} &\coloneqq R^{jl} - R^{jl}_\epsilon \\
\label{def: R_S}
R_S^{jl} &\coloneqq 
\sum_{I = (k, f) \in \Z \times \F} B_\la^{jl}[\Theta_I,  \Theta_{\overline{I}}] 
-
\phi_k^2(t) 
\left ( 
e(t)M_{[k]} -
R_\epsilon^{jl}
\right)
\end{align}
The operator $\RR$ above satisfies $\nb_j\nb_l\RR^{jl}[U] = U$ whenever $U$ has mean zero.  Note that the argument of $\RR$ in lines \eqref{def: R_H} and \eqref{def: R_T} has mean zero due to the frequency support of $\pr_t \Th$ and the fact that the other terms can be written in divergence form.  In line \eqref{def: R_S}, we use that $\nb_j \nb_l B_\la^{jl}[\Th_I, \Th_{\bar{I}}] = \nb_l\left[T^l[\Th_I]\Th_{\bar{I}} + T^l[\Th_{\bar{I}}] \Th_I\right]$ and that the spatial divergence of a function of $t$ alone is $0$.

In the following sections we define the operators used above as well as other tools used in estimating the stress tensors. To prove the Main Lemma we will prove estimates on each of the stress errors. What we will show is that for the weighted norm $\ost H[\cdot ]$ defined in Lemma \ref{lem: weighted norm}. 
\[
\ost H[ R_T] \leqc (B_\la N \Xi)^{-3/2} \D_R^{1/2}\nat b^{-1} , \quad 
\ost H[ R_S] \leqc B_\la^{-1}N^{-1}\D_R , \quad 
\ost H[ R_H] \leqc b\D_R , \quad
\ost H[ R_M] \leq \fr{G \D_R}{1000} 
\]
Note that the choice of $b$ in Section \ref{sec:Time cutoffs} optimizes between $R_T$ and $R_H$, which are the largest errors.
\subsubsection{Second Order Anti-Divergence Equation}\label{sec:Second order anti-div}
We use a second order anti-divergence operator to produce a symmetric and traceless (2,0) tensor for the transport stress error and the high-high interfernce error. For all smooth, mean-zero $f: \C^2 \rightarrow \C$, we define the \textbf{second order anti-divergence operator} as the operator 
\begin{equation}\label{def: second order anti-div operator}
	\RR \coloneqq 2\RR_1 -\RR_2
\end{equation}
where $\RR_1$ and $\RR_2$ are the second order anti-divergece operators $\RR^{jl}_1 \coloneqq \nb^j \nb^l \De^{-2}$, $\RR^{jl}_2 \coloneqq \delta^{jl}\De^{-1}$. We note that $\RR^{jl}[f]$ is a symmetric, traceless tensor that satisfies $\nb_j \nb_l \RR^{jl}[f] = f$.
\subsubsection{A Bilinear Anti-Divergence Form}
We use a bilinear first order anti-divergence operator to produce a symmetric, traceless (2,0) tensor for the low frequency stress error.  The work of this section is motivated by \cite[Section 5.4.2]{buckShkVicSQG}.  

Consider the problem of solving $\nb_j R^{j\ell} = Q^l$ for the term 
\ALI{
Q^l(x) \coloneqq T^l[\Theta_I] \Theta_J + T^l[\Theta_J]\Theta_I.
}
Although it is not obvious that an anti-divergence for $Q^l$ would have a good form, there is at least hope of finding an anti-divergence for $Q^l$ as $Q^l$ satisfies the necessary and sufficient condition of having integral $0$ due to the anti-self-adjointness of $T^l$, which is equivalent to the multiplier $m^l$ being odd.  Since the anti-self-adjointness is most easily seen in frequency space, there is motivation to look at the problem in frequency space\footnote{See also \cite{IOnonpd} for a different anti-divergence operator derived using Taylor expansion in frequency space.} to define $R^{jl}$.  

Recall now that the  corrections from Section~\ref{sec: corrections} are constructed with compact frequency support in the annulus $\{ \xi : 2^{-1}\la \leq |\xi| < 2\la \}$.  We start by replacing $\Theta_I, \Theta_J$ with Schwartz approximations that we will also call $\Theta_I, \Theta_J$ by an abuse of notation. By using a mollification in frequency space we may assume that the Schwartz approximations maintain frequency support in $\{ \xi : 2^{-1}\la \leq |\xi| < 2\la \}$, and that they converge pointwise in physical space while also remaining uniformly bounded there. For these approximations we construct a bilinear anti-divergence as follows. Define a smooth bump function $\hat{\chi}_{\approx \la}$ that is 1 on the annulus $\{ \xi : 2^{-1}\la \leq |\xi| < 2\la \}$ so that $\hat{\Theta}_I = \hat{\chi}_{\approx \la} \hat{\Theta}_I$ and $\hat{\Theta}_J = \hat{\chi}_{\approx \la} \hat{\Theta}_J$. Recall from Definition \ref{def: SQG} that $m$ is the multiplier for $T$ and take a Fourier transform of $Q^l(x)$. 
\begin{align}
	\hat{Q}^l(\xi) 
	&= 
	\int_{\hat{\R}^2} 
	m^l(\xi - \eta) \hat{\Theta}_I(\xi - \eta) \hat{\Theta}_J(\eta) + 
	\hat{\Theta}_I(\xi - \eta) m^l(\eta)\hat{\Theta}_J(\eta) \fr{d\eta}{(2\pi)^2} \nonumber \\
	&= 
	\int_{\hat{\R}^2} 
	m^l(\xi - \eta)\hat{\chi}_{\approx \la}\hat{\Theta}_I(\xi - \eta)\hat{\Theta}_J(\eta) + 
	\hat{\Theta}_I(\xi - \eta) m^l(\eta)\hat{\chi}_{\approx \la}\hat{\Theta}_J(\eta) \fr{d\eta}{(2\pi)^2} \nonumber \\
	&\text{Define } m_{\approx \la}(\cdot) \coloneqq m(\cdot)\hat{\chi}_{\approx \la}(\cdot) \nonumber \\
	&=
	\int_{\hat{\R}^2} 
	\left [m^l_{\approx \la}(\xi - \eta) + m^l_{\approx \la}(\eta) \right ] 
	\hat{\Theta}_I(\xi - \eta) 
	\hat{\Theta}_J(\eta) \fr{d\eta}{(2\pi)^2} \nonumber \\
	&\text{By the oddness of the multiplier } m^l_{\approx \la}\nonumber \\
	&=
	\int_{\hat{\R}^2} 
	\left [m^l_{\approx \la}(\xi - \eta) - m^l_{\approx \la}(-\eta) \right ] 
	\hat{\Theta}_I(\xi - \eta) 
	\hat{\Theta}_J(\eta) \fr{d\eta}{(2\pi)^2} \nonumber \\
	&=
	i\xi_j
	\int_{\hat{\R}^2} \int_0^1 
	-i \nb^j m^l_{\approx \la}(u_\si) d\si  
	\hat{\Theta}_I(\zeta) 
	\hat{\Theta}_J(\eta) \fr{d\eta}{(2\pi)^2} \nonumber \\
	&\eqqcolon
	i\xi_j
	\int_{\hat{\R}^2}
	\hat{K}^{jl}_\la (\zeta, \eta) 
	\hat{\Theta}_I(\zeta) 
	\hat{\Theta}_J(\eta) \fr{d\eta}{(2\pi)^2}, \label{eq: hat Q formula}
\end{align}
where we have set
\[
	\zeta \coloneqq \xi - \eta, \quad u_\si \coloneqq \si (\xi - \eta) - (1 - \si)\eta, 
\]
and we have used Taylor's Remainder Theorem in the last equality. In the last line we defined 
\begin{equation}\label{def: hat K }
	\hat{K}^{jl}_\la (\zeta, \eta) \coloneqq 
	\int_0^1 
	-i \nb^j m^l_{\approx \la}(u_\si)
	\hat{\chi}_{\approx \la}(\zeta)
	\hat{\chi}_{\approx \la}(\eta)
	d\si.
\end{equation}
Line~\eqref{eq: hat Q formula} defines in frequency space a solution $R^{jl}$ to $\nb_j R^{jl} = Q^l$ that is not symmetric, whereas our ultimate goal is in fact to define a solution to $\nb_j \nb_l R^{jl} = \nb_l Q^l$ that is symmetric.  Therefore we proceed by taking the symmetric part of the above solution (noting that $\nb_j \nb_l R^{jl}_{\mathrm{asym}} = 0$ for the anti-symmetric part), and taking an inverse Fourier transform.  

Define the symmetric part of this kernel as
\begin{equation}\label{def:hat K sym}
	\hat{K}^{jl}_{\la, \text{sym}} \coloneqq \frac{\hat{K}^{jl}_\la+ \hat{K}_\la^{lj}}{2}.
\end{equation}
It can be shown that $\hat{K}_{\la, \text{sym}}$ is traceless for the specific multiplier of the SQG equation. 
Now consider 
\[
	\hat{Q}^l_2(\xi) \coloneqq 
	i\xi_j
	\int_{\hat{\R}^2}
	\hat{K}^{jl}_{\la, \text{sym}} (\zeta, \eta) 
	\hat{\Theta}_I(\zeta) 
	\hat{\Theta}_J(\eta) \fr{d\eta}{(2\pi)^2}
\]
and take the inverse Fourier Transform to obtain 
\ALI{
	Q_2^l(x) &= \nb_j \int_{\hat{\R}^2}e^{i \xi \cdot x} \int_{\hat{\R}^2}
	\hat{K}^{jl}_{\la, \text{sym}} (\zeta, \eta)
	\hat{\Theta}_I(\zeta) 
	\hat{\Theta}_J(\eta)
	\fr{d\eta}{(2\pi)^2} \fr{d\xi}{(2\pi)^2} \\
	&=
	 \nb_j \int_{\hat{\R}^2\times \hat{\R}^2} e^{i (\zeta + \eta) \cdot x}
	 \hat{K}^{jl}_{\la, \text{sym}} (\zeta, \eta)
	\hat{\Theta}_I(\zeta) 
	\hat{\Theta}_J(\eta)
	\fr{d\eta}{(2\pi)^2} \fr{d\zeta}{(2\pi)^2}.
}
Observe that $\hat{K}_{\la, \text{sym}}^{j\ell}$ is 
compactly supported in frequency and smooth due to the factors of $\hat{\chi}_{\approx \la}$, making $\hat{K}_{\la, \text{sym}}^{j\ell}$ a Schwartz function. Thus we can define the corresponding physical space kernel  $K^{jl}_{\la, \text{sym}} (h_1, h_2)$ as the inverse Fourier Transform of $\hat{K}^{jl}_{\la, \text{sym}} (\zeta, \eta)$ so that 
\[
	\hat{K}^{jl}_{\la, \text{sym}} (\zeta, \eta) = \int_{\hat{\R}^2\times \hat{\R}^2} e^{-i(\zeta, \eta) \cdot (h_1, h_2)}K^{jl}_{\la, \text{sym}} (h_1, h_2) dh_1 dh_2.
\]
Then 
\ALI{
	Q_2^l(x) &=
	\nb_j \int_{\R^2 \times \R^2} 
	e^{i (\zeta + \eta) \cdot x}	
	\int_{\hat{\R}^2\times \hat{\R}^2}
	e^{-i(\zeta, \eta) \cdot (h_1, h_2)}
	K^{jl}_{\la, \text{sym}}(h_1, h_2)
	dh_1 dh_2 
	\hat{\Theta}_I(\zeta) 
	\hat{\Theta}_J(\eta)
	\fr{d\eta}{(2\pi)^2} \fr{d\zeta}{(2\pi)^2} \\
	&=
	\nb_j \int_{\R^2 \times \R^2} 
	K^{jl}_{\la, \text{sym}}(h_1, h_2)
	\int_{\hat{\R}^2}
	e^{i\zeta \cdot (x - h_1)}
	\hat{\Theta}_I(\zeta) 
	\fr{d\zeta}{(2\pi)^2}
	\int_{\hat{\R}^2}
	e^{i\eta \cdot (x - h_2)}
	\hat{\Theta}_J(\eta)
	\fr{d\eta}{(2\pi)^2}
	dh_1 dh_2 \\
	&=
	\nb_j \int_{\R^2 \times \R^2}K^{jl}_{\la, \text{sym}}(h_1, h_2)\Theta_I(x - h_1)\Theta_J (x - h_2) dh_1 dh_2 \\
	&\eqqcolon \nb_j B^{jl}_{\la}[\Theta_I, \Theta_J](x)
}
Here we define the \textbf{Bilinear anti-divergence operator} or \textbf{Bilinear form} as
\begin{equation} \label{def: Bilinear form}
	B^{jl}_{\la}[F_1, F_2](x) \coloneqq  \int_{\R^2 \times \R^2}K^{jl}_{\la, \text{sym}}(h_1, h_2)F_1(x - h_1 )F_2(x - h_2) dh_1 dh_2,
\end{equation}
for scalar fields $F_1, F_2 \in C^\infty(\T^2)$.   
This bi-convolution operator satisfies 
\ali{
T^l[\Theta_I] \nb_l \Theta_J + T^l[\Theta_J]\nb_l \Theta_I =
	\nb_l \nb_j B^{jl}_{\la}[\Theta_I, \Theta_J](x) \label{eq:bilin:antidiv}
}
for all 
Schwartz approximations to $\Theta_I, \Theta_J$ with Fourier support in $\{ |\xi| \sim \la \}$. Since $\|K_{\la, \text{sym}} \|_{L^1(\R^2)}$ is finite, we may pass to the limit in \eqref{def: Bilinear form} from Schwartz approximations using the dominated convergence theorem to conclude that \eqref{eq:bilin:antidiv} holds (both in $\DD'$ and classically) for the periodic functions $\Theta_I, \Theta_J$.  
We note that by scaling considerations we have the following inequality for $h = (h_1, h_2) \in \R^{2} \times \R^2$
\begin{equation}
	\label{eq: Kernel scaling}
\la^m	\| |h|^m K_{\la, \text{sym}}^{j\ell}(h_1, h_2) \|_{L^1(\R^2 \times \R^2)} \leqc_m \la^{-1},
\end{equation}
and we also observe for use in the calculations of Section~\ref{sec:explicitMultip} that, by \eqref{def: hat K },
\ali{
\widehat{K}_{\la, \text{sym}}(\la \nb \xi_I, -\la \nb \xi_I) &= -i(2\la)^{-1} (\nb^j m^l + \nb^l m^j)(\nb \xi_I)  \label{eq:K:nonexplicit}
}
is homogeneous of degree $-1$ in $\la$ and in $\nb \xi_I$.  In the above we have used that $\nb m$ is degree $-1$ homogeneous, and that $\hat{\chi}_{\approx \la}(\la \nb \xi_I) = 1$ while $\nb\hat{\chi}_{\approx \la}(\la \nb \xi_I) = 0$ due to the condition \eqref{eq: phase function conditions}, which keeps the phase gradient within a small distance of its initial conditions.
\subsection{Low frequency part of the error}\label{sec: algebra of the cancellation}
The goal of this section is to explain how the wave corrections described in Section \ref{sec: corrections} are used to cancel the Reynolds stress error in the Low frequency stress error. The tools described in this section are similar to ones used in \cite{isettVicol}. In this section we will define the coefficient functions $\gamma$. 
\subsubsection{The Bilinear Microlocal Lemma}
The lemma below is inspired from an analogous version used in \cite{isettVicol} and stated in \ref{lem: microlocal lemma}. This tool tells us that the bilienar convolution operator acts like a multicative operator to leading order when it is applied to our highly oscillatory scalar field corrections $\Theta$.
\begin{lem}[Bilinear Microlocal Lemma] 
\label{lem: Bilinear Microlocal}
For conjugate scalar field corrections $\Theta_I , \Theta_{\overline{I}}$ as defined in \ref{sec: corrections} and a bilinear form $B_\la$ as defined in \eqref{def: Bilinear form} we have the following identity
\[
	B^{j\ell}_\la[\Theta_I, \Theta_{\overline{I}}](x, t) = 
		\theta_I^2(x, t)\hat{K}^{j\ell}_{\la, \text{sym}}[\lambda \nb \xi_I,-\lambda \nb \xi_I] + \delta B_I^{j\ell}(x, t)
\]
where $\delta B^{j\ell}$ is a small error term that has the explicit form
\ALI{
	\delta B_I^{j\ell}(x, t) &\coloneqq 
	\int_{\R^2\times\R^2}e^{i\la \nb \xi_I(x) \cdot h_1}e^{-i \la \nb \xi_I(x) \cdot h_2}K^{j\ell}_{\la, \text{sym}}[h_1, h_2]
	\th_I(x) Y(x, h_1)
	dh_1 dh_2
	\\
	&+
	\int_{\R^2\times \R^2}e^{i\la \nb \xi_I(x) \cdot h_1}e^{-i \la \nb \xi_I(x) \cdot h_2}K^{j\ell}_{\la, \text{sym}}[h_1, h_2]
	\th_I(x) Y(x, h_2)  
	dh_1 dh_2\\
	&+
	\int_{\R^2\times \R^2}e^{i\la \nb \xi_I(x) \cdot h_1}e^{-i \la \nb \xi_I(x) \cdot h_2}K^{j\ell}_{\la, \text{sym}}[h_1, h_2]
	Y(x, h_1) \cdot Y(x, h_2)
	dh_1 dh_2\\
	Y(x, h) &\coloneqq 
	\int_0^1
	\frac{d}{dr} e^{iZ(r, x, h)} \th_I (x - rh)	
	dr\\
	Z(r, x, h) &\coloneqq 
	r\la \int_0^1 h^a h^b \pr_a \pr_b \xi(x - sh ) (1 - s) ds
}
\end{lem}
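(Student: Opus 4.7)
The plan is to follow the same strategy as the single-operator Microlocal Lemma~\ref{lem: microlocal lemma}, now applied to the bilinear kernel $K^{j\ell}_{\la,\text{sym}}$. The key observation is that the two conjugate waves $\Theta_I = e^{i\la \xi_I}\th_I$ and $\Theta_{\overline{I}} = e^{-i\la \xi_I}\th_{\overline{I}}$ share the same phase up to a sign, so that when both are evaluated at shifted points $x-h_1$ and $x-h_2$ under the bi-convolution, the bulk phase contributions $e^{\pm i\la \xi_I(x)}$ cancel exactly, and what remains is a phase linear in $(h_1, h_2)$ plus a small quadratic remainder captured by $Z$.

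To begin, I substitute the explicit form of $\Theta_I, \Theta_{\overline{I}}$ into the definition~\eqref{def: Bilinear form} of $B^{j\ell}_\la$, and I apply Taylor's theorem with integral remainder to the phase in the form
\[
\la \xi_I(x - h) = \la \xi_I(x) - \la h \cdot \nb \xi_I(x) + Z(1, x, h),
\]
with $Z$ exactly as defined in the lemma statement. After the bulk phases from the two factors cancel, the integrand becomes the frozen linear phase (a product of two exponentials in $\la \nb\xi_I(x) \cdot h_i$) multiplied by $e^{iZ(1, x, h_1)}\th_I(x - h_1)$ and the conjugate $e^{-iZ(1, x, h_2)}\th_{\overline{I}}(x - h_2)$. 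Applying the fundamental theorem of calculus to each factor, and using $Z(0, x, h) = 0$, yields
\[
e^{iZ(1, x, h)}\th_I(x-h) = \th_I(x) + Y(x, h), \qquad Y(x, h) \coloneqq \int_0^1 \frac{d}{dr}\bigl[e^{iZ(r, x, h)}\th_I(x - rh)\bigr]\, dr,
\]
and similarly a conjugate expression for the other factor. Expanding the bilinear product $(\th_I(x) + Y(x,h_1))(\th_{\overline{I}}(x) + \overline{Y}(x,h_2))$ produces four terms: the leading product $\th_I(x)\th_{\overline{I}}(x)$ combines with the frozen linear phase and kernel $K^{j\ell}_{\la,\text{sym}}$ to give exactly $\th_I^2(x)\hat{K}^{j\ell}_{\la, \text{sym}}(\la \nb \xi_I, -\la \nb \xi_I)$ by the Fourier-transform definition of $\hat{K}^{j\ell}_{\la, \text{sym}}$ and the fact that the coefficients $\gamma_I$ constructed in Section~\ref{sec: amplitudes} are real, so $\th_{\overline{I}} = \th_I$ and $\th_I\th_{\overline{I}} = \th_I^2$. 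The remaining three mixed terms assemble into $\delta B_I^{j\ell}$ in the stated form.

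I expect the proof to be mostly bookkeeping rather than substantive analysis; the only real content is linearizing both phases simultaneously and exploiting the conjugacy to cancel the bulk phase. The main subtlety is tracking the signs in the two Taylor expansions, which can be cross-checked against \eqref{eq:K:nonexplicit} where the principal symbol is written out explicitly. The exchanges of integration order used throughout are justified by the Schwartz-class decay of $K^{j\ell}_{\la, \text{sym}}$, which follows from the compact frequency support and smoothness of $\hat{K}^{j\ell}_{\la, \text{sym}}$ established in the construction preceding the definition \eqref{def: Bilinear form}, together with the $C^\infty$ regularity of the phase $\xi_I$ and the amplitude $\th_I$.
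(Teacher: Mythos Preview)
Your proposal is correct and follows essentially the same approach as the paper: Taylor-expand each phase $\xi_I(x-h_i)$ about $x$, cancel the bulk phases $e^{\pm i\la\xi_I(x)}$, and recognize the leading integral as $\hat{K}^{j\ell}_{\la,\text{sym}}(\la\nb\xi_I,-\la\nb\xi_I)$, with the remainder captured by the fundamental-theorem-of-calculus term $Y$. The paper's own proof is only a brief sketch of this computation, so your write-up in fact supplies more of the bookkeeping (the explicit use of $\th_{\bar I}=\th_I$ from realness of $\gamma_I$, and the product expansion into four terms) than the paper does.
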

\begin{proof}
	The proof is based on the Taylor expansion as in the Microlocal Lemma \ref{lem: microlocal lemma}.   
  The starting point for the computation is the approximation
	\ALI{
	B^{j\ell}_\la[\Theta_I, \Theta_{\overline{I}}] &= \int_{\R^2 \times \R^2} e^{i \la \xi_I(x - h_1)} e^{-i\la \xi_I(x -h_2)}  \th_I(x-h_1) \th_I(x-h_2) K_{\la,\mathrm{sym}}^{jl}[h_1,h_2] dh_1 dh_2 \\
	=\int_{\R^2 \times \R^2} &\left[e^{i \la (\xi_I(x - h_1) - \xi_I(x))} \th_I(x-h_1)\right] \left[e^{-i\la(\xi_I(x -h_2) - \xi_I(x))} \th_I(x-h_2) \right] K_{\la,\mathrm{sym}}^{jl}[h_1,h_2] dh_1 dh_2 \\
	&\approx \th_I(x)^2 \int_{\R^2 \times \R^2} e^{-i \la \nb \xi_I \cdot h_1} e^{i \la \nb \xi_I \cdot h_2} K_{\la,\mathrm{sym}}^{jl}[h_1,h_2] dh_1 dh_2,
	}
where we recognize that the last integral is exactly $\widehat{K}_{\la, \mathrm{sym}}^{jl}[\la\nb \xi_I(x), -\la\nb \xi_I(x)]$.
	\end{proof}
\subsubsection{Explicitly Calculating the Symmetric Multiplier} \label{sec:explicitMultip}
We explicitly calculate the $\hat{K}_{\la, \text{sym}}^{j\ell}$ term.  
Starting from $m^l(p) = i \ep^{\ell a} p_a/|p|$ and \eqref{eq:K:nonexplicit}, we have 
\begin{align}\label{eq: evaluated hat K}
	\hat{K}_{\la, \text{sym}}^{j\ell}( \la \nb \xi_I, - \la \nb \xi_I) 
	&=
	-2^{-1} \la^{-1}
	i\left ( \nb^jm^\ell +\nb^\ell m^j\right ) (\nb\xi_I)
	\\
	&= 
	(-2\la)^{-1} 
	\left ( \frac{\ep^{\ell a}  \nb_a \xi_I  \nb^j \xi_I + \ep^{ja}  \nb_a \xi_I  \nb^\ell \xi_I }{| \nb \xi_I|^3} \right ) \nonumber 
	\\
	\hat{K}_{\la, \text{sym}}( \la \nb \xi_I, - \la \nb \xi_I)&=
	(2\la)^{-1}
	\begin{bmatrix}
		-2 \nb_1 \xi_I \nb_2 \xi_I & (\nb_1 \xi_I)^2 - (\nb_2 \xi_I)^2  \\
		 (\nb_1 \xi_I)^2 - (\nb_2 \xi_I)^2 & 2 \nb_1 \xi_I \nb_2 \xi_I
	\end{bmatrix}
	| \nb \xi_I |^{-3} \nonumber
\end{align}
In particular, the result is traceless and homogeneous of degree $-1$ in $\la$ and the phase gradients.
\subsubsection{Solving the Quadratic equation using Linear Phase Functions}

Our goal now is to define coefficients $\ga_I$ such that the low frequency error term \eqref{def: R_S} is close to zero.

We will treat the term $R_\ep$ as a lower order term, and will treat the phase gradients $\nb \xi_I$ as perturbations of their initial conditions $\nb \hat{\xi}_I$, which are constants.  Therefore, employing the approximation 
\ALI{
B_\la[\Th_I, \Th_{\bar{I}}] \approx \th_I^2 \widehat{K}_{\la, \mathrm{sym}}^{jl}[\la \nb \xi_I,-\la \nb \xi_I] \approx  \th_I^2 \widehat{K}_{\la, \mathrm{sym}}^{jl}[\la\nb \hat{\xi}_I,-\la\nb \hat{\xi}_I]
}
we are motivated by \eqref{def: R_S} to consider the following linear system 
\begin{equation}\label{eq:linear system}
\sum_{I \in \{k \}\times \F} \theta_I^2(x, t)\hat{K}^{j\ell}_{\la, \text{sym}}[\lambda \nb \hat{\xi}_I,-\lambda \nb \hat{\xi}_I] 
= \phi_k(t) e(t) M_{[k]}
\end{equation}
In this case, the $\gamma_I(x, t) = \hat{\gamma}_I$ will be constant coefficients and we assume the linear initial conditions $\nb \hat{\xi}_I$ defined in Section \ref{sec:phase functions}.  We define a set of constant matrices $M_{[k]}$ as  
\[
	M_{[k]} \coloneqq
	2^{-1}
	5^{-3/2}
	\begin{bmatrix}
	-8 & 0 \\ 0 & 8
	\end{bmatrix}
	\; 
	\text{if }k\equiv 0 \mod 2 ,
	\quad
	M_{[k]} \coloneqq
	2^{-1}
	5^{-3/2}
	\begin{bmatrix}
	8 & 0 \\ 0 & -8
	\end{bmatrix}
	\; 
	\text{if }k \equiv 1 \mod 2. \quad
\]
For any index $I = (k, f) \in \Z \times \F$ as defined in Section \ref{sec:Index set} we can explicitly calculate the left hand tensor of $\eqref{eq:linear system}$ in terms of
\begin{align}
	\label{def: M_k}
	\hat{K}_{\text{sym}}^{j\ell}( \nb\hat\xi_{(k, J^k(1,2))}, - \nb\hat\xi_{(k, J^k(1, 2))}) &= 
	\begin{cases}
		2^{-1}
		5^{-3/2}
		\begin{bmatrix}
			-4 & -3 \\ -3 & 4
		\end{bmatrix}
		& 
		\text{if } [k] = 0\\
		2^{-1}
		5^{-3/2}
		\begin{bmatrix}
			4 & 3 \\ 3 & -4
		\end{bmatrix}
		&
		\text{if } [k] = 1
	\end{cases}\\
	\nonumber
	\hat{K}_{\text{sym}}^{j\ell}( \nb\hat\xi_{(k, J^k(2, 1))}, - \nb\hat\xi_{(k, J^k(2, 1))}) &= 
	\begin{cases}
		2^{-1}
		5^{-3/2}
		\begin{bmatrix}
			-4 & 3 \\ 3 & 4
		\end{bmatrix}
		& 
		\text{if } [k] = 0\\
		2^{-1}
		5^{-3/2}
		\begin{bmatrix}
			4 & -3 \\ -3 & -4
		\end{bmatrix}
		&
		\text{if } [k] = 1.
	\end{cases}
\end{align}
We observe that in either the case $[k] = 0$ or $[k] = 1$ the matrix pair is linearly independent and hence crucially spans the two dimensional space of symmetric, traceless $2\times 2$ matrices. We also note that 
\[
	\hat{K}^{j\ell}_{\la, \text{sym}}[\la \nb \hat{\xi}_I,-\la \nb \hat{\xi}_I] 
	= 
	\la^{-1}\hat{K}^{j\ell}_{\text{sym}}[ \nb \hat{\xi}_I,- \nb \hat{\xi}_I]. 
\]
By applying this identity to \eqref{eq:linear system} and recalling $\theta^2_I = \lambda\gamma^2_Ie(t)\phi_{k}(t)$ from Section \ref{sec: amplitudes} then for any $k \in \Z$ solving \eqref{eq:linear system} becomes equivalent to solving the following equation of matrices
\begin{align}
	\label{eq: M_0}
	M_{[k]} =&
	\hat{\gamma}_{([k], J^k(1, 2))}^2	
	\hat{K}_{\text{sym}}^{j\ell}( \nb\hat\xi_{(k, J^k(1,2))}, - \nb\hat\xi_{(k, J^k(1, 2))}) \\
	+&
	\nonumber
	\hat{\gamma}_{([k], J^k(2,1))}^2
	\hat{K}_{\text{sym}}^{j\ell}( \nb\hat\xi_{(k, J^k(2, 1))}, - \nb\hat\xi_{(k, J^k(2, 1))})
\end{align}
The equation of matrices above is solvable by using constant coefficients $\hat{\gamma}_{([k], J^k(1,2))}^2 = \hat{\gamma}_{([k], J^k(2,1))}^2 = 1$. 
\subsubsection{Solving the Cancellation as a Perturbation of a Linear System}
As typical with convex integration schemes we will perform the error cancellation on a mollified version of the stress tensor $R_\epsilon$ as defined in \ref{sec: Mollification}. Please refer to \cite[Section 18]{isett} for detailed discussion of the mollified stress tensor. Consider the term $\nb_j \nb_j R_S^{jl}$ from Section \ref{sec: Error terms}
\begin{align}
\label{eq: double div of R_S}
	\nb_j\nb_\ell R_S^{j\ell} &= 
	\nb_j \nb_l R^{jl}_\epsilon + \nb_l \sum_{I = (k, f) \in \Z \times F} T^l [\Theta_I] \Theta_{\overline{I}} 
\end{align}
Let $e(t)$ be the lifting function defined in Section \ref{sec: lifting function}.  
Also let $M_{[k]}$ be the constant matrix defined in \eqref{def: M_k} and let $B_\la$ be the Bilinear form defined in \eqref{def: Bilinear form} then $\eqref{eq: double div of R_S}$ is equivalent to 
\[
	\eqref{eq: double div of R_S} = 
	\nb_j\nb_\ell 
	\sum_{I = (k, f) \in \Z \times \F}  B_\la^{jl}[\Theta_I,  \Theta_{\overline{I}}] - \phi_k^2(t) 
	\left (
	e(t)M_{[k]} -
	R_\epsilon^{jl}
	\right)
\]
If we drop the double divergence from the above expression then we recover the definition of $R_S$ from \eqref{def: R_S}. We can then apply the Bilinear Microlocal Lemma \ref{lem: Bilinear Microlocal} to $R_S$ to produce
\begin{equation}
	\label{eq: R_S sum}
	R_S = 
	\sum_{I = (k, f) \in \Z \times \F} \theta_I^2(x, t)\hat{K}_{\la, \text{sym}}^{j\ell}[\lambda \nb \xi_I,-\lambda \nb \xi_I] 
	- \sum_k
	e(t)
	\phi_k^2(t)
	\left (	
	M_{[k]}
	-
	\fr{R_\epsilon^{j\ell}}{e(t)}
	\right )
	+ \sum_I \delta B_I^{jl}(x, t)
\end{equation}
We will show that on every time interval, indexed by $k$, the first two summands add to zero pointwise in $(x, t)$. Without loss of generality consider any single time interval indexed by $k$. We will prove 
\begin{equation}
	\label{eq: perturbed linear system}
	\sum_{I \in \{k \}\times \F} \theta_I^2(x, t)\hat{K}_{\la, \text{sym}}^{j\ell}[\lambda \nb \xi_I,-\lambda \nb \xi_I] 
	-
	e(t) \phi_k^2(t)
	\left (
	M_{[k]}
	-
	\fr{R_\epsilon^{j\ell}}{e(t)}
	\right)
	=
	0
\end{equation}
So recall $\theta^2_I = \lambda\gamma^2_Ie(t)\phi_k^2(t)$ from Section \ref{sec: amplitudes} and that $\hat{K}_{\text{sym}}^{j\ell}$ has homogeneity order $-1$ from \eqref{eq: evaluated hat K}. By factoring out appropriately can see that the left hand side of \eqref{eq: perturbed linear system} vanishes if we can solve 
\begin{equation}
\label{eq: gamma system}
	\sum_{I \in \{k\}\times \F} \gamma^2_I \hat{K}_{\la, \text{sym}}^{j\ell}[\nb \xi_I,-\nb \xi_I] = M_{[k]}^{j\ell} - \varepsilon^{jl} , \quad \text{where }\varepsilon^{jl} \coloneqq \frac{R_\epsilon^{j\ell}}{e(t)}
\end{equation}
We view \eqref{eq: gamma system} as a perturbation of our solved linear system \eqref{eq:linear system}. Let $\mathring{\SS}_{2\times 2}$ be the space symmetric, traceless $(2, 0)$ tensors. For parameters $(p_1, p_2)$ define the linear map $\LL_{(p_1, p_2)}: \R \times \R \longrightarrow \mathring{\SS}_{2\times 2}$
\ALI{
	\LL_{(p_1, p_2)}
	(x_1, x_2)
	=
	\left (
	x_1 \hat{K}_{\la, \text{sym}}^{j\ell}[p_1,-p_1]
	+
	x_2 \hat{K}_{\la, \text{sym}}^{j\ell}[p_2,-p_2] 
	\right ) 
}
If we set
\ALI{
p_1 = \nb \hat{\xi}_{I_1}, \quad p_2 	= \nb \hat{\xi}_{I_2} \qquad 	I_1 = [k] \times J^k(1, 2), \quad I_2 = [k] \times J^k(2, 1)
}
then by the calculation \eqref{eq: M_0} we know that 
$
	\LL_{(\nb \hat{\xi}_{I_1}, \nb \hat{\xi}_{I_2})}
	(\hat{\gamma}^2_{I_1}, \hat{\gamma}^2_{I_2})
	=
	\LL_{(\nb \hat{\xi}_{I_1}, \nb \hat{\xi}_{I_2})}
	(1, 1)
	=
	M_{[k]}
$. 
Moreover for the parameters $\nb \hat{\xi}_{I_1},$ and $\nb \hat{\xi}_{I_2}$, we know from the linear independence of the matrices in \eqref{def: M_k} that $\LL_{(\nb \hat{\xi}_{I_1}, \nb \hat{\xi}_{I_2})}$ maps a basis in $\R^2$ to a basis in $\mathring{\SS}_{2\times 2}$. Therefore the map $\LL_{(\nb \hat{\xi}_{I_1}, \nb \hat{\xi}_{I_2})}$ is an invertible linear map and by \eqref{eq: M_0} 
\begin{equation}
	\label{eq: L inv of (1, 1)}
	\LL^{-1}_{(\nb \hat{\xi}_{I_1}, \nb \hat{\xi}_{I_2})}( M_{[k]}) = (1, 1).
\end{equation}
Now $\LL^{-1}_{(p_1, p_2)}$ is a map that depends smoothly on the parameters $(p_1, p_2)$. 
Thus we may choose a constant $c_2 > 0$ and sufficiently small in the condition \eqref{eq: phase function conditions} such that 
\begin{equation}
	\label{def: c_2}
	|\nb \xi_I - \nb \hat{\xi}_I | < c_2 \implies 
	\left \|
	\LL^{-1}_{(\nb \xi_{I_1}, \nb \xi_{I_2})}
	-
	\LL^{-1}_{(\nb \hat{\xi}_{I_1}, \nb \hat{\xi}_{I_2})} 
	\right \|
	< 
	\frac{1}{4 \| M_{[k]}\|}.
\end{equation}
Furthermore when we define our lifting function $e(t)$ from Section \ref{sec: lifting function} we may choose a sufficiently large constant $K$ such that 
\begin{equation}
	\label{def: K}
	e(t) > K \co{R_\ep} \implies 
	\| \varepsilon\|_{L^\infty} 
	<
	8^{-1} \left \| \LL^{-1}_{(\nb \hat{\xi}_{I_1}, \nb \hat{\xi}_{I_2})} \right \|_{op}^{-1}
	\quad 
	\text{and}
	\quad 
	\| M_{[k]} - \varepsilon\|_{L^\infty} 
	\leq
	2\| M_{[k]}\|_{L^\infty}.
\end{equation}
We combine these two conditions with \eqref{eq: L inv of (1, 1)} to get 
\ALI{
	\Big \|
	(1, 1) 
	-
	\LL^{-1}_{(\nb \xi_{I_1}, \nb \xi_{I_2})}&(M_{[k]} - \varepsilon)
	\Big \|_{L^\infty}
\leq
	\left \|
	\LL^{-1}_{(\nb \hat{\xi}_{I_1}, \nb \hat{\xi}_{I_2})}(M_{[k]})
	-
	\LL^{-1}_{(\nb \hat{\xi}_{I_1}, \nb \hat{\xi}_{I_2})}(M_{[k]} - \varepsilon)
	\right \|_{L^\infty}
	\\&
	+
	\left \|
	\LL^{-1}_{(\nb \hat{\xi}_{I_1}, \nb \hat{\xi}_{I_2})}(M_{[k]} - \varepsilon)
	-
	\LL^{-1}_{(\nb \xi_{I_1}, \nb \xi_{I_2})}(M_{[k]} - \varepsilon) 
	\right \|_{L^\infty}
	\\ &
	\leq 
	\left \| \LL^{-1}_{(\nb \hat{\xi}_{I_1}, \nb \hat{\xi}_{I_2})} \right \|_{op} 
	\left \|
	\varepsilon
	\right \|_{L^\infty}
	+
	\left \|
	\LL^{-1}_{(\nb \xi_{I_1}, \nb \xi_{I_2})}
	-
	\LL^{-1}_{(\nb \hat{\xi}_{I_1}, \nb \hat{\xi}_{I_2})} 
	\right \|_{op}
	\left \|
	M_{[k]} - \varepsilon
	\right \|_{L^\infty}
	\\ &
	\leq 1/2.
}
Let 
$
	(\gamma^2_{I_1}, \gamma^2_{I_2})
	=
	\LL_{(\nb \xi_{I_1}, \nb \xi_{I_2})}(M_{[k]} - \varepsilon)
$
. By our previous calculations we can ensure that  $1/2 < \gamma^2_{I_1}, \gamma^2_{I_2} < 2$ on the time support of $\phi_k(t)$. Therefore there exists $1/2 < \gamma_{I_1}, \gamma_{I_2} < 2$ that solve \eqref{eq: perturbed linear system} pointwise in $(x, t)$. By these calculations we can define the \textbf{coefficients}
\begin{equation} \label{def: gamma}
	\gamma_I = \gamma_I(\nb \xi_I, \varepsilon) \in (2^{-1}, 2)
\end{equation}
as the unique, positive functions solving 
\[
	\sum_{I \in \{k\} \times \F} \theta_I^2(x, t)\hat{K}_{\la, \text{sym}}^{j\ell}[\lambda \nb \xi_I,-\lambda \nb \xi_I] 
	-
	e(t)
	\phi_k^2(t)
	\left (	
	M_{[k]}
	-
	\fr{R_\epsilon^{j\ell}}{e(t)}
	\right )
	= 
	0
\]
from \eqref{eq: R_S sum}. We give $C^0$-norm estimates for $\gamma_I$ in Section \ref{sec: basic estimates}. It follows that \eqref{eq: R_S sum} reduces to 
\begin{equation}
	\label{eq: sum delta B_I}
	R_S = \sum_{I \in \Z \times \F} \delta B_I^{jl}.
\end{equation}
\subsection{Estimates}
\subsubsection{Basic Estimates for the Construction} \label{sec: basic estimates}
We state all of the estimates necessary for controlling the wave corrections. 
\begin{prop}
[Phase Gradient Estimates]\label{prop: Phase gradient estimates}
For the phase gradients $\nb \xi_I$ defined in Section \ref{sec:phase functions}
there exists a positive number, $b_0 \leq 1$ such that the conditions \eqref{eq: phase function conditions} on $\nb \xi_I$ hold for $|t - t(I)| \leq \tau$, 
and such that for all $t \in \R$ with $|t - t(I)| \leq \tau$, we have the following estimates
\ALI{
	\co{ \nb_{\va} D_t^r (\nb \xi_I)} \leqc_{\va} N^{(|\va| + 1 - L)_+/L} \Xi^{|\va|} \nat^r , \quad \text{for all }|\va | \geq 0, r = 0, 1\\
	\co{ \nb_{\va} D_t^2 (\nb \xi_I)} \leqc_{\va} N^{(|\va| + 2 - L)_+/L} \Xi^{|\va|} \nat^2 , \quad \text{for all }|\va | \geq 0, r =  2
}
Moreover the $r = 1$ bound holds also when using the differential operator $\nb_{\va_1} D_t \nb_{\va_2}$ applied to $\nb \xi_I$, where $|\va_1| + |\va_2|\ = |\va|$, and the $r = 2$ bound holds also when using the differential operator $\nb_{\va_1} D_t \nb_{\va_2} D_t \nb_{\va_3}$ applied to $\nb \xi_I$, where $|\va_1| + |\va_2| + |\va_3| = |\va|$. 
\end{prop}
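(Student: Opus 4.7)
The plan is to exploit the fact that $\xi_I$ solves $D_t \xi_I = 0$ with linear initial data, so that $\nb \xi_I$ satisfies an autonomous ODE along the characteristic flow of $u$ which closes via Gronwall. Start from the commutator identity $\nb_a D_t = D_t \nb_a + (\nb_a u^l)\nb_l$, which applied to $\xi_I$ immediately gives
\ali{
D_t(\nb_a \xi_I) = -(\nb_a u^l)(\nb_l \xi_I). \label{eq:plan:xi-ode}
}
Along the characteristic flow $\Phi_s$ of $\pr_t + u\cdot \nb$, the composition $\nb \xi_I \circ \Phi_s$ solves a linear matrix ODE whose coefficient is pointwise bounded by $\co{\nb u} \leq \Xi \eu = \nat$, so Gronwall over $|s| \leq \tau = b_0 \nat^{-1}$ yields
\ALI{
\co{\nb \xi_I(t)} \leq \co{\nb \hat{\xi}_I} e^{Cb_0}, \qquad \co{\nb \xi_I(t) - \nb \hat{\xi}_I} \leq (e^{Cb_0}-1)|\nb \hat{\xi}_I| \lesssim b_0.
}
Since $|\nb \hat{\xi}_I| = |J^k f| = \sqrt{5}$ is a fixed constant, I then choose $b_0 \in (0,1]$ small enough (depending only on $c_2$) that both $|\nb \xi_I - \nb \hat{\xi}_I| \leq c_2 < 1/4$ and $|\nb \xi_I| \geq 1$ hold on this interval, verifying \eqref{eq: phase function conditions}.

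For higher spatial derivatives ($r = 0$, general $\va$), induct on $|\va|$: differentiating \eqref{eq:plan:xi-ode} and using the iterated commutator $[D_t, \nb_\va] = \sum_{\va_1+\va_2=\va,\,|\va_1|\geq 1} c_{\va_1\va_2}(\nb_{\va_1} u^l)\nb_{\va_2}\nb_l$ yields an identity of the form $D_t(\nb_\va \nb \xi_I) = P_\va(\nb^{\leq |\va|+1} u, \nb^{\leq |\va|}\nb \xi_I)$, where every monomial of $P_\va$ carries at least one derivative of $u$. When $|\va|+1 \leq L$ all such factors are controlled directly by the frequency-energy bound $\co{\nb_{\va'}u} \leq \Xi^{|\va'|}\eu$, so integration along the flow over the short time $\tau = b_0 \nat^{-1}$ converts each factor of $\Xi \eu$ into $O(b_0)$ and closes the induction with $\co{\nb_\va \nb \xi_I} \leq C_\va \Xi^{|\va|}$. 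When $|\va|+1 > L$, the missing high-order derivatives of $u$ are recovered via Bernstein's inequality, valid because $u = T\theta$ inherits the compact Fourier support $\{|\xi| \leq \Xi\}$ from $\theta$; the combinatorial growth in $|\va|$ of the Bernstein constants and of the commutator polynomial is absorbed into the slack factor $N^{(|\va|+1-L)_+/L}$ appearing in the statement.

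The material-derivative cases $r = 1, 2$ and the mixed operators $\nb_{\va_1} D_t \nb_{\va_2}$ and $\nb_{\va_1} D_t \nb_{\va_2} D_t \nb_{\va_3}$ are handled by iteratively applying $D_t$ to the previous identities and again invoking the commutator, reducing everything to polynomials in $\nb^{\leq |\va|}D_t u$, $\nb^{\leq |\va|+1} u$, and the already-estimated spatial derivatives of $\nb \xi_I$. The frequency-energy bound $\co{\nb_{\va'}D_t u} \leq \Xi^{|\va'|} \nat\, \eu$ for $|\va'| \leq L-1$ supplies the factor $\nat$ per application of $D_t$ with no further loss, matching the stated $\nat^r$. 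The main obstacle is the combinatorial bookkeeping: each differentiation of \eqref{eq:plan:xi-ode} produces a polynomial whose structure, total degree, and constants depend on $|\va|$, and these have to be tracked systematically against the inductive hypothesis. Absorbing the resulting combinatorial growth into the mild slack $N^{(|\va|+1-L)_+/L}$ past $L$ derivatives is the most delicate point; this is handled in essentially the same manner as the analogous phase-gradient estimates in \cite[Section 18]{isett}, to which the present proposition is a direct adaptation tailored to the SQG setting.
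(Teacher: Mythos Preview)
Your proposal is correct and matches the Gronwall-plus-commutator-induction argument of \cite[Sections~17.1--17.3]{isett} (not Section~18) that the paper simply cites. One minor point: the factor $N^{(|\va|+1-L)_+/L}$ is not there to absorb Bernstein or combinatorial constants---those are already covered by the $\va$-dependent implicit constant $\leqc_{\va}$---but is inherited from the mollification scale $\ep_x \sim N^{-1/L}\Xi^{-1}$ used for the coarse-scale velocity in \cite{isett}; since in the present setting $u = T\theta$ is already frequency-localized by hypothesis, the paper's Remark immediately following the proposition notes that the estimates in fact hold with no $N$-factor at all.
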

\begin{proof}  See \cite[Sections 17.1-17.3]{isett}.  We note that the choice of $b_0$ depends only on the geometric constant $c_2$ in the first inequality of \eqref{eq: phase function conditions}, which in turn implies the second inequality of \eqref{eq: phase function conditions}.
\end{proof}
\begin{remk}
	Due to $u$ being frequency localized by assumption one can identify $u$ with $u_\epsilon$ from \cite{isett} in this proof. Doing this forgoes the need for any mollification factors like what appears in \cite{isett}, which allows the above estimates 
	to improve to $\co{ \nb_{\va} D_t^r (\nb \xi_I)} \leqc_{\va} \Xi^{|\va|} \nat^r$, for all $|\va | \geq 0, r = 0, 1, 2$. 
\end{remk}
\begin{prop}[Mollification estimates] \label{prop: Mollification estimates}
	We have the following bounds for the $R_\ep$ from Section \ref{sec: Mollification}. 
		\begin{align*}
		\co{\nb_{\va} D_t^r R_\epsilon} 
		&\leqc_{\va}  N^{\fr{3}{2L}(|\va| + r - L)_+}\Xi^{|\va|}\nat^r \D_R, \quad \text{for all }  |\va| \geq 0, r = 0, 1\\
		\co{\nb_{\va} D_t^2 R_\epsilon} 
		&\leqc_{\va}  N^{\fr{3}{2L}(|\va| + 1 - L)_+}\Xi^{|\va|} \nat \epsilon_t^{-1} \D_R, \quad \text{for all }  |\va| \geq 0 \\
		\co{R - R_\ep} &\leqc  \left(
	 \nat \ep_t + \ep_x^L \Xi^{L}
	\right) \D_R  
	\end{align*}
\end{prop}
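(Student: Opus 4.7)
My plan is to reduce everything to the a priori bounds on $R$ provided by the frequency-energy levels and standard convolution-type estimates, handling the spatial mollification $\chi_{\ep_x} \ast \chi_{\ep_x}$ and the flow mollification $\eta_{\ep_t} \ast_\Phi$ in turn. Two structural facts will be used throughout: first, that $D_t$ commutes with composition by $\Phi_s$ via the identity $D_t[f \circ \Phi_s] = \frac{d}{ds}[f \circ \Phi_s] = (D_t f)\circ \Phi_s$; and second, that on the short time scale $|s| \leq \ep_t \leq \nat^{-1}$, the spatial Jacobian $\nb_x \Phi_s$ and its iterated derivatives remain of unit size by Gr\"onwall-type arguments using $\co{\nb u} \leq \nat$ from Definition~\ref{def: freq and energy levels}.

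For the first bound with $r \in \{0,1\}$, I would first treat the subcritical case $|\va| + r \leq L$ (for $r = 0$) or $|\va| + r \leq L-1$ (for $r = 1$), where the a priori estimates $\co{\nb_\va R} \leq \Xi^{|\va|}\D_R$ and $\co{\nb_\va D_t R} \leq \Xi^{|\va|}\nat \D_R$ transfer through both mollifications by convolution boundedness and the $D_t$-commutation identity above, yielding the claimed estimate with no $N$ factor. When $|\va|+r$ exceeds the available regularity, each additional spatial derivative is transferred onto the convolution kernel $\chi_{\ep_x}$, costing one factor of $\ep_x^{-1} = c_0^{-1} N^{3/(2L)}\Xi$ per derivative, which accumulates precisely into $N^{\frac{3}{2L}(|\va|+r-L)_+}$.

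For the second bound, the a priori hypotheses do not control $D_t^2 R$, so I would exploit the flow mollification by a single integration by parts in the $s$ variable. Using the commutation identity,
\ALI{
D_t^2 R_\ep(t,x) = \int \tfrac{d^2}{ds^2}\bigl[R_{\ep_x}(\Phi_s(t,x))\bigr] \eta_{\ep_t}(s)\, ds = -\int (D_t R_{\ep_x})(\Phi_s(t,x))\, \eta'_{\ep_t}(s)\, ds,
}
and combining $\co{D_t R} \leqc \nat \D_R$ with $\|\eta'_{\ep_t}\|_{L^1} \leqc \ep_t^{-1}$ yields the base case $|\va| = 0$.  Only one time derivative need be absorbed onto the kernel, which is why only a single $\ep_t^{-1}$ appears. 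Spatial derivatives are then distributed as in the first bound, paying $\ep_x^{-1}$ for each spatial derivative beyond the $L-1$ controlled by the a priori bound on $\nb_\va D_t R$, producing the factor $N^{\frac{3}{2L}(|\va|+1-L)_+}$.

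For the third bound, I would split $R - R_\ep = (R - R_{\ep_x}) + (R_{\ep_x} - R_\ep)$. The spatial piece is estimated by Taylor expansion of $R(x-h)$ to order $L-1$ with integral remainder of order $L$; the vanishing moment condition $\int h^\va \chi_{\ep_x}\, dh = 0$ for $1 \leq |\va| \leq L-1$ annihilates the polynomial terms and the double self-convolution $\chi_{\ep_x} \ast \chi_{\ep_x}$ absorbs a second layer of vanishing moments, leaving an integral involving $h^\va \nb^\va R$ with $|\va| = L$, which is bounded by $\ep_x^L \Xi^L \D_R$ using $\co{\nb^L R} \leq \Xi^L \D_R$. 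The flow piece is handled by Taylor expansion along the flow, $R_{\ep_x}(\Phi_s) - R_{\ep_x} = \int_0^s (D_t R_{\ep_x})(\Phi_{s'})\, ds'$, which after integrating against $\eta_{\ep_t}(s)$ supported in $|s| \leq \ep_t$ is bounded by $\ep_t \co{D_t R} \leqc \ep_t \nat \D_R$.

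I expect the main technical obstacle to be managing the commutators between spatial derivatives and composition with $\Phi_s$ when both mollifications are combined for higher-derivative cases, as the spatial derivatives of $R_\ep$ unfold via the Fa\`a di Bruno formula into sums involving $(\nb^k R_{\ep_x})\circ \Phi_s$ paired with products of derivatives of $\Phi_s$. These are routine but tedious in the Onsager-type convex integration framework; the required bounds on $\co{\nb^m_x \Phi_s}$ are proven inductively from the estimate $\co{\nb^m u} \leq \Xi^m e_u^{1/2}$ of Definition~\ref{def: freq and energy levels}, and the full execution parallels the corresponding estimates in \cite[Section 18]{isett}.
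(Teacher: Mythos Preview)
Your proposal is correct and follows essentially the same strategy as the paper, which simply cites \cite[Propositions 18.5, 18.7 and Section 18.3]{isett} and notes that the exponents change with the modified $\ep_x, \ep_t$. The one minor difference is in the third bound: the paper uses the decomposition $R - R_\ep = (R - \eta_{\ep_t} \ast_\Phi R) + \eta_{\ep_t}\ast_\Phi (R - R_{\ep_x})$, which lets one apply the a priori bound $\co{D_t R} \leq \nat \D_R$ directly, whereas your splitting $R - R_\ep = (R - R_{\ep_x}) + (R_{\ep_x} - \eta_{\ep_t}\ast_\Phi R_{\ep_x})$ requires the intermediate estimate $\co{D_t R_{\ep_x}} \leqc \nat \D_R$, which in turn needs a harmless commutator bound $[u\cdot\nb,\, \chi_{\ep_x}\ast\chi_{\ep_x}\ast]$; both routes yield the same result.
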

\begin{proof}
	We cite the proofs of \cite[Proposition 18.5]{isett} and \cite[Proposition 18.7]{isett}. Here we identify the coarse scale flow $u_\ep$ from \cite{isett} with  $u_\ep \coloneqq u = T\theta$ because $u$ is restricted to low frequencies by assumption. Our different choice of mollication factors $\epsilon_x, \epsilon_t$ in \eqref{def: flow mollification parameters} means that the estimates here are the same estimates from \cite{isett} but with $N$ replaced by $N^{\fr{3}{2}}$.  The bound on $\co{R - R_\ep}$ comes from \cite[Section 18.3]{isett}, which employs the decomposition $R - R_\ep = (R - \eta_{\ep_t} \ast_\Phi R) + \eta_{\ep_t}\ast_\Phi (R - R_{\ep_x}) $.  Here $\ast_\Phi$ denotes mollification along the flow as defined in Section~\ref{sec: Mollification} and $R_{\ep_x}$ is the spatial mollification of $R$ in Section~\ref{sec: Mollification}.
\end{proof}
\begin{prop}[Coefficients of the stress equation estimates]\label{prop: gamma estimates}
For components of the amplitude $\varepsilon$ from \eqref{eq: gamma system} and $\gamma_I$ from \eqref{def: gamma} the following bounds hold uniformly for $I \in \Z \times F$
\ALI{
		\co{\nb_{\va} D_t^r \varepsilon} + \co{\nb_{\va} D_t^r \gamma_I } &\leqc_{\va} 
	N^{\fr{3}{2L}(|\va| + 1 - L)_+}\Xi^{|\va|} \nat^r  , &\quad \text{for all } |\va | \geq 0, r = 0, 1\\
		\co{\nb_{\va}D_t^2\varepsilon}  + \co{\nb_{\va}D_t^2\gamma_I} 
		&\leqc_{\va} N^{\fr{3}{2L}(|\va| + 1 - L)_+}\Xi^{|\va|} \nat \epsilon_t^{-1},  &\quad \text{for all } |\va | \geq 0
}
\end{prop}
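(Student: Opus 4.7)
The plan is to reduce both estimates to Proposition~\ref{prop: Mollification estimates}, Proposition~\ref{prop: Phase gradient estimates}, the explicit construction of the lifting function $e(t)$ in Section~\ref{sec: lifting function}, and the implicit characterization of $\gamma_I$ in Section~\ref{sec: algebra of the cancellation}.

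First I would treat $\varepsilon^{jl} = R_\ep^{jl}/e(t)$. Since $e$ depends only on $t$, we have $D_t e = \pr_t e$. By construction $e(t) \geq K\D_R$ on the relevant support and $\|(d/dt)^r e^{1/2}\|_{C^0} \leqc \nat^r \D_R^{1/2}$ for $0 \leq r \leq 2$; from these a standard chain-rule argument applied to $s \mapsto 1/s$ yields $\|D_t^r e^{-1}\|_{C^0} \leqc \nat^r \D_R^{-1}$ for $r=0,1$ and a similar bound for $r=2$. Combining with Proposition~\ref{prop: Mollification estimates} via the Leibniz rule, the $\D_R$ in those bounds cancels against the $1/e$ and leaves exactly the claimed bounds for $\nb_\va D_t^r \varepsilon$. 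The mollification loss factor $N^{\frac{3}{2L}(|\va|+r-L)_+}$ is already absorbed in the weaker factor $N^{\frac{3}{2L}(|\va|+1-L)_+}$ appearing in the statement, and for the $r=2$ case the factor $\nat \ep_t^{-1}$ inherited from $D_t^2 R_\ep$ dominates the $\nat^2$ contribution from $D_t^2 e^{-1}$ (since $\nat \leq \ep_t^{-1}$ in the parameter regime), giving the stated second-order bound.

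Next, for $\gamma_I$, I would use the implicit solution $(\gamma_{I_1}^2, \gamma_{I_2}^2) = \LL^{-1}_{(\nb \xi_{I_1}, \nb \xi_{I_2})}(M_{[k]} - \varepsilon)$ established in Section~\ref{sec: algebra of the cancellation}. The map $(p_1, p_2, M) \mapsto \LL^{-1}_{(p_1,p_2)}(M)$ is real-analytic on an open neighborhood of $(\nb \hat{\xi}_{I_1}, \nb \hat{\xi}_{I_2}, M_{[k]})$ on which $\LL$ remains invertible, with all derivatives uniformly bounded thanks to the smallness condition \eqref{def: c_2} on $|\nb \xi_I - \nb \hat{\xi}_I|$ and the largeness condition \eqref{def: K} on $K$. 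Since the construction also gives $\gamma_I^2 \in (1/4, 4)$, taking a square root preserves smoothness and we may write $\gamma_I = \Psi(\nb \xi_{I_1}, \nb \xi_{I_2}, \varepsilon)$ for a fixed smooth function $\Psi$ with uniformly bounded derivatives of every order.

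The claimed bounds for $\gamma_I$ then follow by applying the chain rule: each $\nb_\va D_t^r \gamma_I$ expands as a finite sum of products of derivatives $\nb_{\va'} D_t^{r'} \nb \xi_{I_j}$ and $\nb_{\va''} D_t^{r''} \varepsilon$ whose multi-indices sum to $\va$ and whose orders sum to $r$. Bounding the phase-gradient factors via Proposition~\ref{prop: Phase gradient estimates} (in the strengthened form of the remark that follows it, where the frequency localization of $u$ removes the $N$-loss) and the $\varepsilon$-factors via the bounds just established, every term in the expansion is dominated by the claimed right-hand side, the worst contribution being the one in which all derivatives land on $\varepsilon$. The main technical obstacle is purely bookkeeping: one must check that the mollification losses $N^{\frac{3}{2L}(\cdot)_+}$ inherited from $R_\ep$ uniformly dominate all other loss factors across every term of the chain-rule expansion, so that the single exponent $\frac{3}{2L}(|\va|+1-L)_+$ in the statement suffices for all cases and so that the $\nat \ep_t^{-1}$ factor in the $r=2$ bound correctly absorbs the competing $\nat^2$ contributions.
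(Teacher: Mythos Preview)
Your proposal is correct and follows essentially the same approach as the paper, which simply cites \cite[Propositions 20.1--20.2]{isett} for the details; you have effectively unpacked those arguments, treating $\varepsilon = R_\ep/e(t)$ via the Leibniz rule with the mollification bounds and then $\gamma_I$ via the chain rule applied to the smooth function $\Psi(\nb\xi_{I_1},\nb\xi_{I_2},\varepsilon)$ arising from the implicit solution of \eqref{eq: gamma system}. The bookkeeping you flag (combining the $N^{\frac{3}{2L}(\cdot)_+}$ losses across chain-rule terms) is indeed handled by the counting inequality, exactly as in the cited reference.
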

\begin{proof}
	For the estimates of $\varepsilon_I$ we cite the proof of \cite[Proposition 20.1]{isett} but use our estimates $R_\epsilon$ from Proposition \ref{prop: Mollification estimates}. It is important to add that our choices of $\epsilon_t^{-1}$ from \eqref{def: flow mollification parameters} and $N$ from Lemma \ref{lem: main} are different from \cite{isett} but all of the arguments remain the same. For the bounds on $\gamma_I$ we cite  \cite[Proposition 20.2]{isett}. We remark that in the first bound, slightly sharper estimates hold if the terms are controlled separately but we have combined them together here.
\end{proof}
\begin{prop}[Principal amplitude estimates]
\label{prop: Amplitude estimates}
For the amplitude $\theta_I$ from \eqref{def: theta_I} and the induced velocity vector field $u_I = T\theta_I$ from \ref{sec: velocity corrections} the following bounds hold uniformly for $I \in \Z \times F$
\ALI{
	\co{\nb_{\va} D_t^r \theta_I } + 
	\co{\nb_{\va} D_t^r u_I } 
	&\leqc_{\va} 
	\la^{1/2}N^{\frac{3}{2L}(|\va| + 1 - L)_+ } \Xi^{|\va |} D_R^{1/2}\tau^{-r}, 
	 & \text{for all } |\va | &\geq 0, r = 0, 1 \\
	\co{\nb_{\va} D_t^2 \theta_I } + 
	\co{\nb_{\va} D_t^2 u_I } 
	&\leqc_{\va} 
	\la^{1/2}B_\la^{3/2} N^{\frac{3}{2L}(|\va| + 1 - L)_+ } \Xi^{|\va |}D_R^{1/2}\nat \epsilon_t^{-1} ,
	& \text{for all } |\va | &\geq 0.
}
\end{prop}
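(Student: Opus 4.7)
The proof is a direct computation from the definition $\theta_I(x,t) = \la^{1/2}\ga_I(x,t)\phi_k(t)e^{1/2}(t)$, together with Leibniz's rule and the estimates for the three constituent factors: Proposition~\ref{prop: gamma estimates} for $\ga_I$, the bound $\|\pr_t^r \phi_k\|_{C^0} \lesssim \tau^{-r}$ for the time cutoff, and $\|\pr_t^r e^{1/2}\|_{C^0} \lesssim (\nat)^r \D_R^{1/2}$ for the lifting function from Section~\ref{sec: lifting function}.  The crucial simplification is that $\phi_k$ and $e^{1/2}$ depend only on $t$, so spatial derivatives annihilate them, and $D_t$ acts on them just as $\pr_t$ does; all $\nb_{\va}$'s must land on $\ga_I$, while $D_t$'s may be distributed freely.

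The plan is to treat the three cases in order.  For $r=0$ the bound follows immediately from $\|\ga_I\|_{C^0}\lesssim 1$ and $\|e^{1/2}\|_{C^0}\lesssim \D_R^{1/2}$, with spatial derivative losses picked up from Proposition~\ref{prop: gamma estimates}.  For $r=1$, the Leibniz expansion of $D_t\theta_I$ produces three contributions (from $D_t\ga_I$, $\pr_t\phi_k$, and $\pr_t e^{1/2}$), which are bounded by $\nat$, $\tau^{-1}$, and $\nat$ respectively; since $b\leq 1$ gives $\tau^{-1}=b^{-1}\nat\geq \nat$, the time-cutoff term $\pr_t\phi_k$ dominates and produces the claimed $\tau^{-1}\D_R^{1/2}$ bound.

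For $r=2$, the worst contribution in the Leibniz expansion is $\ga_I\cdot\pr_t^2\phi_k\cdot e^{1/2}$, which produces $\tau^{-2}\D_R^{1/2}$.  The key algebraic identity to verify here is
\ALI{
\tau^{-2} \sim B_\la^{3/2}\,\nat\,\ep_t^{-1},
}
which follows from the definitions $\tau = \nat^{-1}b$ with $b^2 = b_0^2\D_u^{1/2}/(\D_R^{1/2}B_\la^{3/2}N^{3/2})$, together with $\ep_t = c_0 N^{-3/2}\Xi^{-3/2}\D_R^{-1/2}$ and $\nat = \Xi^{3/2}\D_u^{1/2}$: both sides evaluate to a constant times $B_\la^{3/2}N^{3/2}\Xi^3\D_u^{1/2}\D_R^{1/2}$.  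This identity is exactly the role played by the choice of time step $\tau$ in Section~\ref{sec:Time cutoffs}, and it explains the appearance of the $B_\la^{3/2}$ factor in the statement.  The remaining Leibniz terms, involving $D_t^2\ga_I$ (bounded by $\nat\ep_t^{-1}$) and $D_t\ga_I\cdot\pr_t\phi_k\cdot e^{1/2}$ (bounded by $\nat\tau^{-1}\D_R^{1/2} = b^{-1}\nat^2\D_R^{1/2}$), are all dominated by $B_\la^{3/2}\nat\ep_t^{-1}\D_R^{1/2}$.

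For the velocity correction $u_I^l = \th_I\,m^l(\nb\xi_I)$, I would apply Leibniz once more, treating $m^l(\nb\xi_I)$ as a factor whose $C^0$ norm is at most $|\nb\xi_I|^{-1}\leq 1$ by \eqref{eq: phase function conditions} (using that $m^l$ is homogeneous of degree zero) and whose derivatives are controlled via the chain rule using Proposition~\ref{prop: Phase gradient estimates}.  The phase gradient estimates yield a loss of the same form $N^{(|\va|+1-L)_+/L}\Xi^{|\va|}\nat^r$, which is $\leq N^{\frac{3}{2L}(|\va|+1-L)_+}\Xi^{|\va|}\nat^r$, and so do not worsen the estimate obtained for $\th_I$.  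The main obstacle is purely bookkeeping: tracking all Leibniz cross terms, handling the commutators $[\nb,D_t]=\nb u\cdot \nb$ which arise when interleaving spatial and advective derivatives (and which are absorbed using the frequency-energy level bounds on $\nb u$), and confirming in each combination that the dominant contribution matches the stated right-hand side.
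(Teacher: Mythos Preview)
Your proposal is correct and follows essentially the same approach as the paper: Leibniz expansion of $\th_I = \la^{1/2}\ga_I\phi_k e^{1/2}$, identification of $\ga_I\,\pr_t^2\phi_k\,e^{1/2}$ as the dominant $r=2$ term, and verification of the key comparison $\tau^{-2}\lesssim B_\la^{3/2}\nat\ep_t^{-1}$ (you in fact observe the sharper equivalence $\tau^{-2}\sim B_\la^{3/2}\nat\ep_t^{-1}$, which the paper only states as an inequality). The commutator remark is unnecessary since Proposition~\ref{prop: gamma estimates} already supplies bounds in the ordered form $\nb_{\va}D_t^r\ga_I$, but it does no harm.
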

\begin{proof}
	First we recall from Section \ref{sec: velocity corrections} that $u_I^l = \th_I m^l(\nb \xi_I )$.  The bounds for $u_I$ follow similarly to the bounds on $\th_I$, so we restrict attention to estimating $\th_I$.  
The following calculations are outlined in \cite[Propositions 21.1 - 21.4]{isett}. They follow the basic scheme that the bounds of $\th_I$ are roughly a rescaling of the estimates for $\gamma_I$. For example we give a calculation of $D_t^2 \th_I$. 
\ALI{
	D_t^2 &\th_I =
	D_t^2 \left [ \la^{1/2}\gamma_I(x, t) \phi \left( \frac{t - k\tau}{\tau} \right )e^{1/2}(t) \right] \\
	&= 
	  \la^{1/2} \gamma_I(x, t) \pr_t^2 \left [ \phi_k(t)e^{1/2}(t) \right] +
	  \la^{1/2}D_t^2 \left [ \gamma_I(x, t) \right] \phi_k(t)e^{1/2}(t) +
	 2 \la^{1/2}D_t\left [ \gamma_I(x, t) \right] \pr_t \left [\phi_k(t)e^{1/2}(t) \right ]
	 \\
	&= 
	A_{I} + A_{II} + A_{III}
}
We begin by estimating the first term using our bounds from Section \ref{sec:Time cutoffs}, \ref{sec: lifting function} and Proposition \ref{prop: gamma estimates}. We will also need the observation that $\Xi e_u^{1/2} \leq \tau^{-1}$ that is due to our definitin of $b^{-1}\geq 1$ in Section \ref{sec:Time cutoffs}. 
\ALI{
	\co{\nb_{\va} A_{I}}
	\leqc  
	\la^{1/2}\co{\nb_{\va} \gamma_I} \tau^{-2}\D_R^{1/2}
	\leqc_{\va}  
	\la^{1/2}N^{\fr{3}{2L}(|\va| + 1 - L)_+}\Xi^{|\va |} \tau^{-2}\D_R^{1/2}
}
We can also bound the second term using the estimates from Proposition \ref{prop: gamma estimates}
\ALI{
	\co{\nb_{\va}A_{II}} 
	\leqc_{\va}  
	\la^{1/2}\co{\nb_{\va}D_t^2 \gamma_I} \D_R^{1/2}
	\leqc_{\va} 
	 \la^{1/2} N^{\fr{3}{2L}(|\va| + 1 - L)_+}\Xi^{|\va |} \nat \epsilon_t^{-1} \D_R^{1/2}
}
For the third term we use a combination of Proposition \ref{prop: gamma estimates} and the definitions from Sections~\ref{sec:Time cutoffs} and \ref{sec: lifting function}. 
\ALI{
	\co{\nb_{\va}A_{III}} 
	\leqc_{\va}  
	\la^{1/2}\tau^{-1} \D_R^{1/2} \co{\nb_{\va} D_t \gamma_I}
	\leqc_{\va} 
	\la^{1/2}N^{\fr{3}{2L}(|\va| + 1 - L)_+}\Xi^{|\va |} \nat \tau^{-1} \D_R^{1/2}
}
Finally, we compare these bounds by proving that $\tau^{-1} \leqc B_\la^{3/4} \epsilon_t^{-1}$ and $\tau^{-2} \leqc B_\la^{3/2} \nat\epsilon_t^{-1}$ with implied constants that only depend on $c_0$ from the definition $\epsilon_t$ given in \ref{sec: Mollification}. We start by proving $\tau^{-1} \leqc B_\la^{3/4}\epsilon_t^{-1}$
\begin{align}\label{eq: inv tau - inv epsilon comparison}
	\tau^{-1} \leqc B_\la^{3/4} \epsilon_t^{-1}
	\iff
	\frac{\D_R^{1/4}B_\la^{3/4}N^{3/4}}{\D_u^{1/4}} \Xi e_u^{1/2} 
	\leqc 
	B_\la^{3/4}
	N^{3/2} \Xi^{3/2}\D_R^{1/2}
	&\iff
	\\
	\nonumber
	B_\la^{3/4}
	\Xi^{3/2}\D_R^{1/4}\D_u^{1/4}N^{3/4}
	\leqc
	B_\la^{3/4}
	N^{3/2} \Xi^{3/2}\D_R^{1/2}
	&\iff
	\frac{\D_u^{1/4}}{\D_R^{1/4}} 
	\leqc
	N^{3/4}
\end{align}
The last inequality holds due to the assumption $N \geq \frac{\D_u}{\D_R} \geq 1$ in the Main Lemma \ref{lem: main}. Next we verify $\tau^{-2} \leqc B_\la^{3/2} \Xi e_u^{1/2}\epsilon_t^{-1}$. 
\ALI{
	\tau^{-2} \leqc B_\la^{3/2} \Xi e_u^{1/2}\epsilon_t^{-1}
	\iff
	\frac{\D_R^{1/2}B_\la^{3/2} N^{3/2}}{\D_u^{1/2}} \Xi^2 e_u
	\leqc
	B_\la^{3/2} 
	 \Xi e_u^{1/2}N^{3/2} \Xi^{3/2}\D_R^{1/2}
	&\iff
	\\
	\D_R^{1/2}N^{3/2} \Xi^3 \D_u^{1/2}
	\leqc
	\Xi^3 \D_u^{1/2}N^{3/2} \D_R^{1/2}
	&\iff
	1 \leqc 1
}
The last inequality holds for a sufficiently small constant, $c_0$, taken in the definition of $\ep_t$ of line \eqref{def: flow mollification parameters}. Now that we have shown $\tau^{-1} \leqc B_\la^{3/4} \epsilon_t^{-1}$ and $\tau^{-2} \leqc B_\la^{3/2} \nat \epsilon_t^{-1}$ we may apply this to compare our previously found bounds of $\nb_{\va} A_I, \nb_{\va} A_{II}$, and $\nb_{\va} A_{III}$. Doing so and taking $B_\la \geq 1 $ so that $B_\la^{3/2}  \geq B_\la^{3/4}$ gives the claimed bound on $\co{\nb_{\va}D_t^2 \th_I}$. 
\end{proof}
\begin{prop}[Bilinear Microlocal correction estimates] 
\label{prop: Bilinear microlocal estimates}
Let $L \geq 2$ and assume the following bounds for the coarse-scale velocity 
\begin{equation}
\label{eq: coarse scale flow assumption}
	\co{\nb_{\va} u} \leqc_{\va} N^{(|\va| -L)_+/L}\Xi^{|\va|} e_u^{1/2} , \quad \text{for all }|\va| \geq 0
\end{equation}
For $\delta B_I^{jl}$ defined in \ref{lem: Bilinear Microlocal} and $D_t = \pr_t + u^i\nb_i$ the following holds uniformly in $I \in \Z \times \F$
\begin{equation}
	\label{eq: delta B estimate}
	\co{ \nb_{\va}D_t^{\rho} \delta B_I^{jl}} \leqc_{\va}
	B_\la^{-1}N^{-1}N^{\fr{3}{2L}(|\va| + 2 - L)_+}\Xi^{|\va|}\D_R \tau^{-\rho},  \quad \text{for all } |\va | \geq 0, 0 \leq \rho \leq 1.
\end{equation}
\end{prop}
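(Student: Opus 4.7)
The plan is to start from the explicit expression for $\delta B_I^{jl}$ given in Lemma \ref{lem: Bilinear Microlocal}, which decomposes it into three double integrals, each weighted by the smooth kernel $K_{\la,\mathrm{sym}}^{jl}(h_1,h_2)$ and involving the Taylor-remainder quantity $Y(x,h)$. The first task is to bound $|Y(x,h)|$ pointwise. Since $Y(x,h) = e^{iZ(1,x,h)}\th_I(x-h) - \th_I(x)$ by the fundamental theorem of calculus, a Taylor expansion gives
\ALI{
|Y(x,h)| &\leqc |h| \, \co{\nb\th_I} + \la |h|^2 \, \co{\nb^2 \xi_I}\,\co{\th_I},
}
and one can differentiate inside the defining integral of $Y$ to obtain analogous estimates for $\nb_{\va} D_t^r Y$ with at most $\tau^{-r}\Xi^{|\va|}$ loss, using Propositions \ref{prop: Phase gradient estimates} and \ref{prop: Amplitude estimates} and the identity $D_t \xi_I = 0$.

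For the base case $|\va|=\rho=0$, the strategy is to pair these pointwise bounds on $|Y|$ with the $L^1$-scaling $\| |h|^m K_{\la,\mathrm{sym}} \|_{L^1} \leqc \la^{-1-m}$ from \eqref{eq: Kernel scaling}. The two mixed integrals (with one $Y$ and one $\th_I$) yield two subterms: the first contributes $\co{\th_I}\co{\nb\th_I}\la^{-2} \leqc (\la^{1/2}\D_R^{1/2})(\la^{1/2}\Xi\D_R^{1/2})\la^{-2} \leqc \Xi \la^{-1}\D_R$, and the second contributes $\co{\th_I}^2 \la \co{\nb^2 \xi_I}\la^{-3}\leqc \Xi \la^{-1}\D_R$; since $\la \gtrsim B_\la N \Xi$, both are bounded by $B_\la^{-1} N^{-1}\D_R$. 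The third integral, which is quadratic in $Y$, produces products of $\la^{-1-a}\la^{-1-b}$-type factors and is a higher-order correction, controlled by $B_\la^{-2} N^{-2}\D_R \leq B_\la^{-1}N^{-1}\D_R$.

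For higher derivatives $\nb_{\va} D_t^\rho \delta B_I^{jl}$, I would differentiate under the double integral. Derivatives falling on the outer exponentials $e^{\pm i\la \nb\xi_I(x)\cdot h_j}$ produce factors of $\la h_j^k \nb_k \nb \xi_I$, each of which contributes $\la \cdot |h| \cdot \Xi$; when absorbed into the kernel via \eqref{eq: Kernel scaling}, this costs one factor of $\la^{-1}$ and thus precisely a net $\Xi$ per spatial derivative, consistent with the target. Derivatives that land on $\th_I$ or on $Y$ itself are handled by Proposition \ref{prop: Amplitude estimates} and the analogous differentiated form of $Y$. Time derivatives contribute the $\tau^{-\rho}$ factor, coming essentially from $D_t \th_I$, since $D_t\xi_I=0$ and $D_t$ on $\nb\xi_I$ or $\nb^2 \xi_I$ costs only $\nat \leq \tau^{-1}$. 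The $N^{\frac{3}{2L}(|\va|+2-L)_+}$ loss appears only once derivatives in $\va$ begin to fall on second-order quantities like $\nb^2 \xi_I$ and on $\th_I$ enough times to trigger the analogous loss in Propositions \ref{prop: Phase gradient estimates} and \ref{prop: Amplitude estimates}; the ``$+2$'' offset in the exponent reflects the presence of $\nb^2 \xi_I$ in $Y$.

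The main obstacle will be bookkeeping: one must expand $\nb_{\va} D_t^\rho$ as a sum over all ways the derivatives can distribute across the two exponentials, the two shifted amplitudes inside $Y$, and (via $h$-absorption) the kernel $K_{\la,\mathrm{sym}}$. Each such term must be verified to lie below the stated bound, and the combinatorial accounting of powers of $\la$, $\Xi$, $\tau^{-1}$ and $N^{3/(2L)}$ (which arrive either from amplitude estimates or from accumulated phase-gradient derivatives) must be carried out carefully. The upshot is that the quadratic smallness in $|h|$ coming from the Taylor remainder interior to $Y$ is exactly what is required to beat the $\la^{-1}$ scale of the kernel down to the desired $B_\la^{-1}N^{-1}$ gain.
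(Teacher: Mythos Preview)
Your approach is essentially the same as the paper's: decompose $\delta B_I$ via the explicit formula from Lemma~\ref{lem: Bilinear Microlocal}, differentiate under the integral, use the kernel scaling \eqref{eq: Kernel scaling} to absorb powers of $|h|$, and track the costs from Propositions~\ref{prop: Phase gradient estimates} and \ref{prop: Amplitude estimates}. The paper organizes the bookkeeping by splitting $Y = Y_1 - Y_2$ and applying Fa\`a di Bruno together with a counting inequality to combine the $N^{(\cdot)_+/L}$ losses from different factors, but your heuristic accounting captures the same mechanism.

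One point you gloss over: when $D_t = \pr_t + u^i(x)\nb_i$ hits a translated quantity such as $\th_I(x-rh)$ or $\nb^2\xi_I(x-sh)$, you do \emph{not} get $(D_t\th_I)(x-rh)$ directly, because $u$ is evaluated at $x$ rather than at the shifted point. The paper handles this by writing $u^i(x) - u^i(x-rh) = -rh^c\int_0^1 \nb_c u^i(x-\sigma rh)\,d\sigma$ and absorbing the extra $|h|$ into the kernel; this commutator term is where the hypothesis \eqref{eq: coarse scale flow assumption} on the coarse-scale velocity is actually used, and you should make sure your bookkeeping accounts for it.
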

We note that the estimate $\ost{\D}_R = \fr{\D_R}{N}$ obtained in the $C^0$ bound above is consistent with an SQG scheme aimed at the optimal regularity.

\begin{proof}
We recall the definition of $\delta B_I$ from Lemma \ref{lem: Bilinear Microlocal}
\ALI{
	\delta B_I^{j\ell}(x, t) &= 
	\int_{\R^2\times \R^2}e^{i\la \nb \xi_I(x) \cdot h_1}e^{-i \la \nb \xi_I(x) \cdot h_2}K^{j\ell}_{\la, \text{sym}}[h_1, h_2]\th_I(x) Y(x, h_1)	dh_1 dh_2 \\
	&+
	\int_{\R^2\times \R^2}e^{i\la \nb \xi_I(x) \cdot h_1}e^{-i \la \nb \xi_I(x) \cdot h_2}K^{j\ell}_{\la, \text{sym}}[h_1, h_2] \th_I(x) Y(x, h_2)  dh_1 dh_2 \\
	&+
	\int_{\R^2\times \R^2}e^{i\la \nb \xi_I(x) \cdot h_1}e^{-i \la \nb \xi_I(x) \cdot h_2}K^{j\ell}_{\la, \text{sym}}[h_1, h_2] Y(x, h_1) Y(x, h_2)  dh_1 dh_2 \\
	&\coloneqq A_I + A_{II} + A_{III}\\
	Y(x, h) &\coloneqq 
	\int_0^1
	\frac{d}{dr} e^{iZ(r, x, h)} \th_I (x - rh)	
	dr\\
	Z(r, x, h) &\coloneqq 
	r\la \int_0^1 h^a h^b \pr_a \pr_b \xi(x - sh ) (1 - s) ds
} 
We show the bound for just $\co{\nb_{\va} D_t^{\rho}A_I}$, with $0 \leq \rho \leq 1$, as the corresponding bounds on $A_{II}$ and $A_{III}$ are similar.  To aid in the computation, we  first decompose $Y(x, h)$ as 
\ALI{
	Y(x, h) &= Y_1(x, h) - Y_2(x, h)\\
	Y_1(x, h) &\coloneqq
	\int_0^1
	e^{iZ(r, x, h)} \left [ i\la \int_0^1 h^a h^b \nb_a \nb_b \xi_I(x - sh)(1-s) ds \right ] 
	\th_I(x - rh)	
	dr\\
	Y_2(x, h) &\coloneqq
	\int_0^1
	e^{iZ(r, x, h)} \nb_a \th_I (x - rh) h^a dr
}
We return our attention to bounding the integral $\nb_{\va}D_t^{\rho }A_I$ in $\nb_{\va}D_t^{\rho}\delta B_I^{jl}$. Consider 
\begin{align}
	A_I &=
	\label{eq: AI1}
	\int_{\R^2\times \R^2} 
	e^{i\la \nb \xi_I(x) \cdot h_1}
	e^{-i\la \nb \xi_I(x) \cdot h_2}
	K^{j\ell}_{\la, \text{sym}}[h_1, h_2]
	\th_I(x) Y_1(x, h_1)dh_1 dh_2 \\
	&-
	\label{eq: AI2}
	\int_{\R^2\times \R^2} 
	e^{i\la \nb \xi_I(x) \cdot h_1}
	e^{-i\la \nb \xi_I(x) \cdot h_2}
	K^{j\ell}_{\la, \text{sym}}[h_1, h_2]
	\th_I(x)Y_2(x, h_1) dh_1 dh_2
\end{align}
For the sake of brevity we will just discuss how to show the claimed bounds on $\nb_{\va} D_t^{\rho} \eqref{eq: AI1}$. By similar methods one can prove the same bounds on $\nb_{\va} D_t^{\rho} \eqref{eq: AI2}$ and by linearity we obtain the bounds on $\nb_{\va}D_t^{\rho} A_I$. Hence we consider just the term $\nb_{\va} D_t^{\rho} \eqref{eq: AI1}$. Using the product rule for $\nb_{\va}D_t^{\rho}$ we may write 
\begin{align}
	\label{eq: Advective derivative of AI1}
	\nb_{\va}D_t^{\rho} \eqref{eq: AI1} &=
	\sum_{ \substack{\va_1 + \dots + \va_6 = \va \\
	\rho_1 + \dots + \rho_6 = \rho}}
	\int_0^1 dr \int_{\R^2\times \R^2} 
	\nb_{\va_1}D_t^{\rho_1} e^{i\la \nb\xi_I(x)\cdot h_1} 
	\nb_{\va_2}D_t^{\rho_2} e^{-i\la \nb\xi_I(x)\cdot h_2}
	K^{j\ell}_{\la, \text{sym}}[h_1, h_2]\\
	\nonumber
	&\cdot
	\nb_{\va_3}D_t^{\rho_3} e^{iZ(r, x, h_1)} 
	\nb_{\va_4}D_t^{\rho_4} iZ(1, x, h_1) 
	\nb_{\va_5}D_t^{\rho_5} \th_I(x - rh_1) 
	\nb_{\va_6}D_t^{\rho_6} \th_I(x) 
	dh_1 dh_2.
\end{align}
We begin with estimates for the first factor in \eqref{eq: Advective derivative of AI1}. We expand this term using the (combinatorial version of the) Fa\' a di Bruno formula as follows
\[
	| \nb_{\va_1} e^{i \la \nb \xi_I(x) \cdot h_1} | \lesssim_{\va_1} \sum_{\pi \in \Pi(|\va_1|)} |h_1|^{|\pi|} \la^{|\pi|} \prod_{B \in \pi} \left\| \nb^{|B|} \nb \xi_I \right\|_{C^0}.
\]
In the line above $\Pi(|\va_1|)$ is the set of all partitions of the set of positive integers $\{1, \ldots, |\va_1|\}$. For a given partition $\pi \in \Pi(|\va_1|)$, we call one subset, $B \in \pi$, a block and we let $|B|$ denote the size of the subset and $|\pi|$ denote the number of blocks in $\pi$. We apply Proposition \ref{prop: Phase gradient estimates} and get
\[
	| \nb_{\va_1} e^{i \la \nb \xi_I(x) \cdot h_1} | \lesssim_{\va_1} \sum_{\pi \in \Pi(|\va_1|)}  |h_1|^{|\pi|} \la^{|\pi|} \prod_{B \in \pi} N^{(|B| + 1-L)_+/L} \Xi^{|B|}.
\]
We now recall an elementary {\it counting inequality}, which states that for any $x_i, M \in \R^+, 1\leq i \leq m$, we have $ \sum_{i = 1}^m ( x_i - M )_+ \leq \left( \left( \sum_{i = 1}^m x_i \right) - M \right)_+$.  A proof can be found in \cite[Lemma 17.1]{isett}.  Applying this inequality with $M = L-1$, and using that $\sum_{B \in \pi} |B| = |\va_1|$ gives
\begin{equation}
	\label{eq: AI1 first factor estimate}
	| \nb_{\va_1} e^{i \la \nb \xi_I(x) \cdot h_1} | \lesssim_{\va_1} (1 + |h|^{|\va_1|}\la^{|\va_1|} ) N^{(|\va_1| + 1-L)_+/L} \Xi^{|\va_1|}.
\end{equation}
Next we treat the advective derivative of the first factor in \eqref{eq: Advective derivative of AI1} using a combination of the Leibniz rule, the Fa\' a di Bruno formula, and our estimates from Proposition \ref{prop: Phase gradient estimates} 
\ALI{
&| \nb_{\va_1} D_t e^{i \la \nb \xi_I(x) \cdot h_1} | \lesssim_{\va_1} \sum_{\vb_1 + \vb_2 = \va_1} \sum_{\pi \in \Pi(|\vb_1|)} |h_1|^{|\pi|} \la^{|\pi|} \left(\prod_{B \in \pi} \left\| \nb^{|B|} \nb \xi_I \right\|_{C^0}\right) \left(|h_1| \la \left\| \nb_{\vb_2} D_t \nb \xi_I \right\|_{C^0} \right)
\\
&\lesssim_{\va_1} \sum_{\vb_1 + \vb_2 = \va_1} \sum_{\pi \in \Pi(|\vb_1|)} |h_1|^{|\pi | +1} \la^{|\pi | + 1} \prod_{B \in \pi} 
\left (
	N^{(|B| + 1  - L)_+/L}\Xi^{|B|}
\right )
\left (
	N^{(|b_2| + 1  - L)_+/L}\Xi^{|b_2|} \nat
\right )
}
In order to simplify this expression we can combine the exponents of the $N$ factors.  We apply the counting inequality with $M = L -1$ and use that $\sum_{B \in \pi} |B| = |\va_1|$ to get
\begin{equation}
	\label{eq: AI1 first factor advective derivative estimate}
	| \nb_{\va_1} D_t e^{i \la \nb \xi_I(x) \cdot h_1} |
	\lesssim_{\va_1}(|h|\la+ |h|^{|\va_1|+1}\la^{|\va_1|+1}) N^{(|\va_1| + 1 - L)_+/L} \Xi^{|\va_1|} (\Xi e_u^{1/2}).
\end{equation}
Combining estimates \eqref{eq: AI1 first factor estimate} and \eqref{eq: AI1 first factor advective derivative estimate} gives
\begin{equation}
	\label{eq: AI1 1st estimate}
	| \nb_{\va_1} D_t^{\rho_1} e^{i \la \nb \xi_I(x) \cdot h_1} | \lesssim_{\va_1} (1+ |h|^{|\va_1|+\rho_1}\la^{|\va_1|+\rho_1}) N^{(|\va_1| + 1 - L)_+/L} \Xi^{|\va_1|} (\Xi e_u^{1/2})^{\rho_1},
\end{equation}
and the same bound holds for the second factor in \eqref{eq: Advective derivative of AI1}
\begin{equation}
	\label{eq: AI1 2nd estimate}
	| \nb_{\va_2} D_t^{\rho_2} e^{-i \la \nb \xi_I(x) \cdot h_2} | \lesssim_{\va_2} (1+ |h|^{|\va_2|+\rho_2}\la^{|\va_2|+\rho_2}) N^{(|\va_2| + 1 - L)_+/L} \Xi^{|\va_2|} (\Xi e_u^{1/2})^{\rho_2}.
\end{equation}
Next we prove the bounds on fourth factor of \eqref{eq: Advective derivative of AI1}, $\nb_{\va} D_t^{\rho_4} Z(r,x, h_1)$, by direct calculation.  For the bound on the spatial derivative, we use Proposition~\ref{prop: Phase gradient estimates} and $\Xi = B_\la^{-1} N^{-1} \la$ to obtain
\ALI{
|\nb_{\va} Z(r,x, h_1)| &\leqc \la |h|^2 \left\| \nb_{\va} \nb^2 \xi_I \right\| \leqc B_\la^{-1} N^{-1} \la^2 |h|^2 N^{(|\va| + 2 - L)_+/L} \Xi^{|\va|},
}
where the constant is independent of $r$.  
For the advective derivative bound, 
we need to approximate the value of $u^i(x)$ in $D_t$ by the value of $u^i$ at a nearby point. For example
\[
	 \left ( \pr_t + u^i(x) \nb_i \right ) \nb_a \nb_b \xi_I(x - sh_1)
	 =
	 D_t  \nb_a \nb_b \xi_I(x - sh_1)
	 +
	 \left (u^i(x) - u^i(x - sh_1) \right ) \nb_i \nb_a \nb_b \xi_I(x - sh_1)
\]
We can control the second term with Taylor's theorem and the counting inequality with $M = L - 2$
\ali{
	\left (u^i(x) - u^i(x - sh_1) \right ) &\nb_i \nb_a \nb_b \xi_I(x - sh_1)
	=
	-sh_1^c \int^1_0 \nb_c u^i(x - \si s h_1)d\si \nb_i \nb_a \nb_b \xi_I(x - sh_1) \label{eq:uPtApproxErrGrad} \\
\left|\nb_{\va_4} \eqref{eq:uPtApproxErrGrad} \right| &\leqc_{\va_4} \sum_{|\vec{b}_1| + |\vec{b}_2| = |\va_4|} |h_1| \left[(\Xi e_u^{1/2})\Xi^{|\vec{b}_1|} N^{(|\vec{b}_1| + 1 - L)_+/L} \right] \Xi^{|\vec{b}_2| + 2} N^{(|\vec{b}_2| + 3 - L)_+/L} \notag \\
\left|\nb_{\va_4} \eqref{eq:uPtApproxErrGrad} \right|  &\leqc_{\va_4} |h_1| \Xi^{|\va_4| + 2} N^{(|\va_4| + 3 - L)_+/L} (\Xi e_u^{1/2}). \notag
}
We use 
$
	N^{(|\va_4| + 3 - L)_+/L} \Xi \leq N^{(|\va_4| + 2 - L)_+/L} \la 
$ 
and $\Xi^{|\va_4|+1} = B_\la^{-1} N^{-1} \la \Xi^{|\va_4|}$ to obtain
\begin{align}
| \nb_{\va_4} D_t^{\rho_4} Z(r,x,h_1)| &\lesssim_{\va_4} \la |h_1|^2 \Xi^{|\va_4| + 1}  (\Xi e_u^{1/2})^{\rho_4} \cdot \left[ N^{(|\va_4| + 2 - L)_+} + {\bf 1}_{\{\rho_4 = 1\}} |h_1| N^{(|\va_4| + 3 - L)_+/L} \Xi \right] 
\nonumber \\
\label{eq: AI1 4th estimate}
&\lesssim_{\va_4}  B_\la^{-1} N^{-1} (|h_1|^2 \la^2 + |h_1|^{2+\rho_4} \la^{2 + \rho_4}) N^{(|\va_4| + 2 - L)_+/L} \Xi^{|\va_4|} (\Xi e_u^{1/2})^{\rho_4}.
\end{align}
Next we compute a bound on the third factor, $\nb_{\va_3} D_t^{\rho_3} e^{i Z(r,x, h_1) }$, as follows 
\begin{align}
\left| \nb_{\va_3} e^{i Z(r,x,h_1)} \right| &\lesssim_{\va_3} \sum_{\pi \in \Pi(|\va_3|)} \la^{|\pi|} |h_1|^{2|\pi|} N^{(|\va_3| + 2 - L)_+/L} \Xi^{|\va_3| +|\pi|}
\nonumber \\
&\lesssim_{\va_3} (1 + \la^{2|\va_3|} |h|^{2|\va_3|}) N^{(|\va_3| + 2 - L)_+/L} \Xi^{|\va_3|} 
\nonumber \\
\left| \nb_{\va_3} D_t e^{i Z(r,x,h_1)} \right| &\lesssim_{\va_3} \sum_{\vb_1 + \vb_2 = \va_3} \left| \nb_{\vb_1}e^{i Z(r,x,h_1)} \nb_{\vb_2}D_tZ(r,x,h_1) ]  \right| 
\nonumber \\
\label{eq: AI1 third factor advective derivative estimate}
&\lesssim_{\va_3} \sum_{\vb_1 + \vb_2 = \va_3} \sum_{\pi \in \Pi(|\vb_1|)} \left(\prod_{B \in \pi} \left| \nb^{|B|} Z(r,x,h_1) \right| \right) \left| \nb_{\vb_2}D_tZ(r,x,h_1) \right|.
\end{align}
If we apply this calculation, Proposition \ref{prop: Phase gradient estimates}, the assumed bounds\eqref{eq: coarse scale flow assumption}, and the counting inequality with $M = L - 2$ to \eqref{eq: AI1 third factor advective derivative estimate} then we get
\[
	\left| \nb_{\va_3} D_t e^{i Z(r,x,h_1)} \right| \lesssim_{\va_3} (|h|^2 \la^2 + |h|^{2|\va_3|+2}\la^{2|\va_3|+2}) N^{(|\va_3| + 2 - L)_+/L} \Xi^{|\va_3|} (\Xi e_u^{1/2}),
\]
and then
\begin{equation}
\label{eq: AI1 3rd estimate}
	\left| \nb_{\va_3} D_t^{\rho_3} e^{i Z(r,x,h_1)} \right| \lesssim_{\va_3} (1 + |h|^{2|\va_3|+2}\la^{2|\va_3|+2})N^{(|\va_3| + 2 - L)_+/L} \Xi^{|\va_3|} (\Xi e_u^{1/2})^{\rho_3}.
\end{equation}
For the fifth factor in \eqref{eq: Advective derivative of AI1} we again approximate $u^i(x)$ in $D_t$ by the value $u^i(x - rh)$ at a nearby point.  
We use Proposition \ref{prop: Phase gradient estimates}, Proposition \ref{prop: Amplitude estimates}, \eqref{eq: coarse scale flow assumption}, and the counting inequality with $M = L - 2$ to get
\begin{align}
\left| \nb_{\va_5} (\pr_t + u^i(x) \nb_i)^{\rho_5} \th_I(x - r h_1) \right| &\leq \left\| \nb_{\va_5} D_t^{\rho_5} \th_I \right\|_{C^0} + \left| \nb_{\va_5} [((u^i(x) - u^i(x - r h_1)) \nb_i )^{\rho_5} \th_I(x - r h_1)]\right| 
\nonumber \\
\label{eq: AI1 5th estimate}
&\lesssim_{\va_5} \Xi^{|\va_5|} N^{\fr{3}{2L}(|\va_5| + \rho_5 + 1 - L)_+} (1 + |h| \la) \la^{1/2} \D_R^{1/2} \tau^{-\rho_5}.
\end{align}
The last factor in \eqref{eq: Advective derivative of AI1}, $\nb_{\va_6}D_t^{\rho_6} \th_I$, can be controlled by a direct application of Proposition \ref{prop: Amplitude estimates}. We can combine this bound with our previously found estimates \eqref{eq: AI1 1st estimate}, \eqref{eq: AI1 2nd estimate}, \eqref{eq: AI1 3rd estimate}, \eqref{eq: AI1 4th estimate} and \eqref{eq: AI1 5th estimate}. Then applying the counting inequality with $M = L - 2$ with the Kernel scaling estimate from \eqref{eq: Kernel scaling} yields
\ALI{
\| \eqref{eq: Advective derivative of AI1} \|_{C^0} &\lesssim_{\va} \sum_{\substack{ \va_1 + \cdots + \va_6 = \va\\ \rho_1 + \cdots + \rho_6 = 1}} \int_0^1 dr \int_{\R^2 \times \R^2} dh_1 dh_2 |K_{\la, \mathrm{sym}}^{j\ell}[h_1,h_2]| \\
&\cdot (1+ |h|^{|\va_1|+\rho_1}\la^{|\va_1|+\rho_1}) N^{(|\va_1| + 1 - L)_+/L} \Xi^{|\va_1|} (\Xi e_u^{1/2})^{\rho_1} \\
&\cdot (1+ |h|^{|\va_2|+\rho_2}\la^{|\va_2|+\rho_2}) N^{(|\va_2| + 1 - L)_+/L} \Xi^{|\va_2|} (\Xi e_u^{1/2})^{\rho_2} \\
&\cdot (1 + |h|^{2|\va_3|+2}\la^{2|\va_3|+2}) N^{(|\va_3| + 2 - L)_+/L} \Xi^{|\va_3|} (\Xi e_u^{1/2})^{\rho_3} \\
&\cdot(|h|^2 \la^2 + |h|^{2+\rho_4} \la^{2 + \rho_4}) N^{(|\va_4| + 2 - L)_+/L} \Xi^{|\va_4|} (\Xi e_u^{1/2})^{\rho_4}  B_\la^{-1} N^{-1}  \\
&\cdot (1 + |h| \la) N^{\fr{3}{2L}(|\va_5| + \rho_5 + 1 - L)_+} \Xi^{|\va_5|} \la^{1/2} \D_R^{1/2} \tau^{-\rho_5} \\ 
&\cdot (1 + |h| \la) N^{\fr{3}{2L}(|\va_6| + \rho_6 + 1 - L)_+} \Xi^{|\va_6|} \la^{1/2} \D_R^{1/2} \tau^{-\rho_6} \\
\| \eqref{eq: Advective derivative of AI1} \|_{C^0} &\lesssim B_\la^{-1} N^{-1} \Xi^{|\va|} N^{\fr{3}{2L}(|\va| + 2 - L)_+} \D_R \tau^{-\rho},
}
where in the last line we used $(\Xi e_u^{1/2}) \leq \tau^{-1}$. This bound completes our work for estimating one summand of $\nb_{\va} D_t^{\rho} A_I$.  The other summand \eqref{eq: AI2} may be handled using the same approach, but in this case the factor $B_\la^{-1} N^{-1}$ arises from the estimates for $h^a \nb_a\th_I(x - rh)$.  By linearity and our estimates of \eqref{eq: AI1} and \eqref{eq: AI2}, we get the claimed bound \eqref{eq: delta B estimate} for $\nb_{\va} D_t^{\rho} A_I$. The remaining integrals $A_{II}$ and $A_{III}$ may be done by an identical calculation. Hence by linearity we have our claimed bound for $\nb_{\va}\delta B_I$.
\end{proof}
\begin{prop}[Microlocal correction estimates]	
	\label{prop: Microlocal correction estimates}
	The following bounds hold uniformly for $I \in \Z \times F$
	\ALI{
		\co{\nb_{\va} D_t^r \delta \th_I} + \co{\nb_{\va} D_t^r \delta u_I} 
		&\leqc_{\va}
		\la^{1/2}B_\la^{-1} N^{-1} N^{\fr{3}{2L}(|\va| + 2 - L)_+}\Xi^{|\va |}\D_R^{1/2}\tau^{-r}
		\quad \text{for all } |\va | \geq 0, r = 0, 1 \\
		\co{\nb_{\va} D_t^2 \delta \th_I} + \co{\nb_{\va} D_t^2 \delta u_I} 
		&\leqc_{\va}
		\la^{1/2}B_\la^{-1} N^{-1} N^{\fr{3}{2L}(|\va| + 2 - L)_+} \Xi^{|\va |}\D_R^{1/2}\nat \epsilon_t^{-1}
		\quad \text{for all } |\va | \geq 0
	}
\end{prop}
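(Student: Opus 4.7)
The plan is to apply the Microlocal Lemma~\ref{lem: microlocal lemma} directly to the convolution operators $P_\la^I$ and $T^l P_\la^I$ acting on $\Theta = e^{i \la \xi_I} \th_I$, in order to obtain explicit integral expressions for $\delta \th_I$ and $\delta u_I^l$, and then to estimate them by essentially the same machinery used for $\delta B_I^{jl}$ in Proposition~\ref{prop: Bilinear microlocal estimates}. The single-convolution setting here is strictly simpler than the bilinear one, and the required gain $B_\la^{-1}N^{-1} = \Xi/\la$ arises from the same two sources.

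To set up the estimates, let $\chi_\la^I$ denote the kernel of $P_\la^I$ and $\wtld{K}_\la^{I,l} \coloneqq T^l \chi_\la^I$ the kernel of $T^l P_\la^I$. Since $\hat\chi_\la^I$ is smooth, compactly supported, and vanishes near the origin, and since $m^l$ is smooth and $0$-homogeneous off the origin, both $\chi_\la^I$ and $\wtld K_\la^{I, l}$ are Schwartz functions obtained as rescalings by $\la$ of fixed kernels, so that
\[
\la^m \bigl\| |h|^m \chi_\la^I \bigr\|_{L^1(\R^2)} + \la^m \bigl\| |h|^m \wtld K_\la^{I, l} \bigr\|_{L^1(\R^2)} \leqc_m 1.
\]
Because $\hat\chi_\la^I(\la \nb \xi_I) = 1$ under the phase condition \eqref{eq: phase function conditions}, and since $m^l$ is $0$-homogeneous, the principal terms identified in Sections~\ref{sec: corrections} and \ref{sec: velocity corrections} match the Microlocal Lemma, giving the explicit error
\[
\delta \th_I(x) = \int_0^1 dr\, \frac{d}{dr} \int_{\R^2} e^{-i\la \nb\xi_I(x)\cdot h} e^{iZ(r,x,h)} \th_I(x - rh)\, \chi_\la^I(h)\, dh,
\]
with the analogous expression for $\delta u_I^l$ obtained by replacing $\chi_\la^I$ with $\wtld K_\la^{I, l}$.

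From here I would proceed as follows. First, carry out $d/dr$ to produce two kinds of contributions: one containing the factor $iZ(1,x,h)$ of size $\la |h|^2 \Xi^2$, the other the factor $-h^a \nb_a \th_I(x - rh)$ of size $|h|\Xi \|\th_I\|_{C^0}$. In either case combining with the moment bound above yields a gain of $\Xi/\la = B_\la^{-1}N^{-1}$ over the principal amplitude $\|\th_I\|_{C^0} \leqc \la^{1/2}\D_R^{1/2}$ from Proposition~\ref{prop: Amplitude estimates}, producing the claimed $r = 0$ estimate. For higher-order derivatives, I would apply $\nb_\va D_t^r$ under the integral and distribute via the Leibniz rule across the factors $e^{-i\la\nb\xi_I\cdot h}$, $e^{iZ}$, the $Z(1,x,h)$-or-$h^a$ prefactor, and $\th_I(x - rh)$. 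Derivatives of the oscillatory exponentials are controlled by Fa\`a di Bruno together with Proposition~\ref{prop: Phase gradient estimates}; derivatives of $Z$ use the same proposition; and derivatives of $\th_I(x-rh)$ use Proposition~\ref{prop: Amplitude estimates}. When a factor of $D_t$ acts inside an evaluation at $x - rh$ or $x - sh$, I approximate $u^i(x)$ by its value at the shifted point and control the discrepancy by the coarse-scale velocity bound~\eqref{eq: coarse scale flow assumption}, exactly as in~\eqref{eq:uPtApproxErrGrad}. The collected $N$-exponents are consolidated via the counting inequality with $M = L - 2$, and for the $r = 2$ bound the comparison $\tau^{-2} \leqc B_\la^{3/2} \nat \ep_t^{-1}$ established in the proof of Proposition~\ref{prop: Amplitude estimates} is used to convert worst-case $\tau^{-2}$ factors into the desired $\nat \ep_t^{-1}$.

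The main obstacle is combinatorial bookkeeping rather than anything conceptual: one must track every polynomial-in-$|h|$ factor produced by the derivatives of the integrand and make sure that each is absorbed by a kernel moment bound of the right order, and check that the residual $B_\la$-powers accumulated through the $D_t^2$-estimates still fit within the claimed bound. Because $\delta u_I^l$ has precisely the same structural formula with $\wtld K_\la^{I, l}$ in place of $\chi_\la^I$, and the two kernels obey the same moment estimates, the bounds for $\delta u_I^l$ follow from the identical computation in parallel.
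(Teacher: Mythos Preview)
Your approach is correct and is essentially the one the paper takes: the paper's proof simply observes that $\delta u_I$ is handled the same way as $\delta\th_I$, then appeals to the explicit Microlocal Lemma remainder formula and cites \cite[Lemma~7.5]{isettVicol}, noting that the bounds follow from the phase-gradient and amplitude estimates (Propositions~\ref{prop: Phase gradient estimates} and~\ref{prop: Amplitude estimates}). Your write-up is a faithful unpacking of exactly that computation, including the same gain mechanism $\Xi/\la = B_\la^{-1}N^{-1}$, the Fa\`a di Bruno/counting-inequality bookkeeping, and the shifted-point treatment of $D_t$.
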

\begin{proof}  
	For the same reasons as those given in Proposition \ref{prop: Amplitude estimates} the calculations for $\delta u_I$ are comparable to those for $\delta \th_I$. Hence it will be enough to discuss the estimates for $\delta \th_I$. These calculations are done via direct calculation (see \cite[Lemma 7.5]{isettVicol}, which uses the same microlocal lemma and an analogous scalar correction).  Hence all of our estimates follow from our previous estimates on $\nb \xi_I$ from Proposition~\ref{prop: Phase gradient estimates} and $\th_I$ from Proposition~\ref{prop: Amplitude estimates}.  
\end{proof}
\begin{prop}[Correction Estimates]
	\label{prop: Correction estimates}
	Let $U_I = T \Theta_I$. The following bounds hold uniformly for $I \in \Z \times F$
\ALI{
	\co{\nb_{\va}D_t^r \Theta_I} +
	\co{\nb_{\va}D_t^r U_I } 
	&\leqc_{|\va|} 
	\la^{1/2 + |\va|}\D_R^{1/2}\tau^{-r}
	\quad \text{for all } |\va | \geq 0, r = 0, 1 \\
	\co{\nb_{\va}D_t^2 \Theta_I } +
	\co{\nb_{\va}D_t^2 U_I }
	&\leqc_{|\va|}
	B_\la^{3/2}
	\la^{1/2 + |\va|}\D_R^{1/2}\nat \epsilon_t^{-1}
	\quad \text{for all } |\va | \geq 0
}
\end{prop}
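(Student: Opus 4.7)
The plan is to derive the bounds directly from the explicit form $\Theta_I = e^{i\la \xi_I}(\th_I + \delta \th_I)$ and $U_I = T \Theta_I = e^{i\la \xi_I}(u_I + \delta u_I)$ established in Sections~\ref{sec: corrections} and \ref{sec: velocity corrections}, by combining the Phase Gradient Estimates (Proposition~\ref{prop: Phase gradient estimates}), the Amplitude Estimates (Proposition~\ref{prop: Amplitude estimates}), and the Microlocal Correction Estimates (Proposition~\ref{prop: Microlocal correction estimates}) via the Leibniz and Fa\`a di Bruno formulas. Since $u_I = \th_I m(\nb \xi_I)$ and $m$ is smooth and bounded on the range of $\nb \xi_I$ (with higher-order bounds governed by Proposition~\ref{prop: Phase gradient estimates}), the $U_I$ estimates follow from the $\Theta_I$ estimates by the same argument, so I concentrate on $\Theta_I$.

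The key structural observation is that $D_t = \pr_t + u^k \nb_k$ annihilates the phase: by the defining transport equation in Section~\ref{sec:phase functions}, $D_t \xi_I = 0$, and therefore $D_t e^{i\la \xi_I} = i\la (D_t \xi_I) e^{i\la \xi_I} = 0$. Consequently, $D_t^r \Theta_I = e^{i\la \xi_I} D_t^r(\th_I + \delta \th_I)$, so the problem reduces to controlling $\nb_{\va}$ of a product of the phase factor and an amplitude whose $\nb_{\va_2} D_t^r$ derivatives are already estimated. First I would expand
\[
\nb_{\va}D_t^r \Theta_I \;=\; \sum_{\va_1 + \va_2 = \va} \binom{\va}{\va_1} (\nb_{\va_1} e^{i\la \xi_I})\,(\nb_{\va_2} D_t^r (\th_I + \delta \th_I)).
\]
For the first factor, Fa\`a di Bruno writes $\nb_{\va_1} e^{i\la \xi_I}$ as a sum over partitions $\pi$ of $\{1,\dots,|\va_1|\}$ of $e^{i\la \xi_I} \prod_{B \in \pi}(i\la \nb^{|B|} \xi_I)$. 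Inserting the bound $\co{\nb^{|B|} \xi_I} \leqc \Xi^{|B|-1} N^{(|B|-L)_+/L}$ from Proposition~\ref{prop: Phase gradient estimates}, each partition contributes $\la^{|\pi|} \Xi^{|\va_1|-|\pi|}$ times a factor of $N$ handled by the counting inequality. Since $\la \in [B_\la N\Xi, 2 B_\la N \Xi]$ with $B_\la, N \geq 1$, the dominant partition is the finest one and
\[
\co{\nb_{\va_1} e^{i\la \xi_I}} \leqc_{|\va_1|} \la^{|\va_1|},
\]
absorbing the subdominant $\Xi^{|\va_1|-|\pi|}$ and the bounded $N^{(\cdot)_+/L}$ factors into the $|\va_1|$-dependent constant.

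For the second factor, Propositions~\ref{prop: Amplitude estimates} and \ref{prop: Microlocal correction estimates} yield, for $r = 0, 1$, $\co{\nb_{\va_2} D_t^r(\th_I + \delta \th_I)} \leqc_{|\va_2|} \la^{1/2} \Xi^{|\va_2|} \D_R^{1/2} \tau^{-r}$, where the $\th_I$ contribution dominates the $\delta \th_I$ contribution by the factor $B_\la N \geq 1$. Combining the two factors, summing over $\va_1 + \va_2 = \va$, and using $\Xi \leq \la$ to convert $\la^{|\va_1|} \Xi^{|\va_2|} \leq \la^{|\va|}$ gives the claimed bound $\co{\nb_{\va} D_t^r \Theta_I} \leqc_{|\va|} \la^{1/2 + |\va|} \D_R^{1/2} \tau^{-r}$ for $r = 0, 1$. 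For $r = 2$, the same Leibniz expansion applies, and the amplitude factor is controlled by the $r = 2$ bound of Proposition~\ref{prop: Amplitude estimates}, which carries an extra $B_\la^{3/2}$ and replaces $\tau^{-2}$ by $\nat \epsilon_t^{-1}$ through the comparison $\tau^{-2} \leqc B_\la^{3/2} \nat \epsilon_t^{-1}$ verified there using $\D_u/\D_R \leq N$ and the definitions of $\tau$, $\epsilon_t$. This produces the second displayed inequality of the proposition.

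The main obstacle is purely bookkeeping: organizing the Fa\`a di Bruno expansion and applying the counting inequality to combine the various $N^{(\cdot)_+/L}$ factors produced by each differentiation so that they get absorbed into the $|\va|$-dependent constant. All the analytic inputs are already in hand; no new ideas beyond the observation that $D_t$ annihilates $e^{i\la \xi_I}$ are required, and the computation is strictly simpler than the one carried out for the bilinear microlocal error in Proposition~\ref{prop: Bilinear microlocal estimates}.
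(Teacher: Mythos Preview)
Your proposal is correct and follows essentially the same approach as the paper: both arguments hinge on the observation that $D_t \xi_I = 0$ implies $D_t^r \Theta_I = e^{i\la\xi_I} D_t^r(\th_I + \delta\th_I)$, after which the Leibniz/Fa\`a di Bruno expansion combined with Propositions~\ref{prop: Phase gradient estimates}, \ref{prop: Amplitude estimates}, and \ref{prop: Microlocal correction estimates} finishes the estimate. One small wording issue: the $N^{(\cdot)_+/L}$ factors are not literally bounded constants but are absorbed into $\la^{|\va_1|}$ via $N\Xi \leq \la$, exactly as you indicate in the adjacent sentence.
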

\begin{proof}
Consider $\Th_I = e^{i \la \xi_I}(\th_I + \de \th_I) \coloneqq e^{i\la \xi_I} \tilde{\th}_I$.  Observe by Propositions~\ref{prop: Amplitude estimates} and \ref{prop: Microlocal correction estimates} that $\tilde{\th}_I$ satisfies the same estimates as $\th_I$ (but with larger constants) since the bounds for $\de \th_I$ are lower order.  The proposition then follows from the formulas
\ALI{
D_t \Th_I = e^{i \la \xi_I} D_t \tilde{\th}_I, \quad D_t^2 \Th_I = e^{i \la \xi_I} D_t^2 \tilde{\th}_I 
}
by appealing to the bounds from Propositions \ref{prop: Phase gradient estimates}, \ref{prop: Amplitude estimates}, \ref{prop: Microlocal correction estimates}.  The bounds for $U_I$ follow similarly.
\end{proof}

\subsubsection{Weighted Norm}
For the purpose of a shorter presentation we introduce the following weighted norm that was used in Lemma 4.1 and Lemma 4.2 of \cite{isett2017nonuniqueness}. 
\begin{lem}[Weighted Norm]\label{lem: weighted norm}
Let $D_t = \pr_t + T^l \th \nb_l$ be the advective derivative with respect to $\th$, $\ost D_t = \pr_t + T^l \ost \th \nb_l$ be the advective derivative with respect to $\ost \th = \th + \Theta$, $\la = B_\la N \Xi$ as defined in Section \ref{sec: corrections}, and define $\overset{\diamond}{e}_u{}^{1/2} \coloneqq (\la \D_R)^{1/2}$. For each advective derivative $\mathring{D}_t \in \{ D_t, \ost D_t\}$ define the weighted norm $\mathring{H} \in \{ H, \ost H\}$ by
	\[	
		\mathring H[F] \coloneqq \max_{0\leq r \leq 1} \; \max_{0 \leq |\va| + r \leq L} \fr{ \co{\nb_{\va}  \mathring {D}_t^r F}}{\la^{|\va|}(\la \overset{\diamond}{e}_u{}^{1/2})^r }
	\]	
	 Furthermore $\mathring H[F]$ satisfies the triangle inequality $\mathring H[F  + G] \leq \mathring H[F] + \mathring H[G]$ and the product rule $\mathring H[FG] \leq \mathring H[F] \cdot \mathring H[G]$. The weighted norms are also comparable $\ost H[F] \leqc H[F] \leqc \ost H[F]$. 
\end{lem}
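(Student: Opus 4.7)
The plan is to prove each of the three assertions in turn: the triangle inequality, the product rule, and the comparability, with only the third requiring the actual structure of the corrections from Proposition~\ref{prop: Correction estimates}.

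For the triangle inequality I would simply observe that for each $(\va, r)$ with $|\va|+r \leq L$, the operator $\nb_\va \mathring D_t^r$ is linear, so the triangle inequality for the $C^0$ norm gives
\ALI{
\fr{\co{\nb_\va \mathring D_t^r (F+G)}}{\la^{|\va|}(\la \ed)^r} \leq \fr{\co{\nb_\va \mathring D_t^r F}}{\la^{|\va|}(\la \ed)^r} + \fr{\co{\nb_\va \mathring D_t^r G}}{\la^{|\va|}(\la \ed)^r} \leq \mathring H[F] + \mathring H[G],
}
and taking the maximum over $(\va, r)$ yields the claim.  For the product rule I would apply the combined Leibniz rule to $\nb_\va \mathring D_t^r(FG)$ to obtain a finite sum of terms $\nb_{\va_1}\mathring D_t^{r_1}F \cdot \nb_{\va_2}\mathring D_t^{r_2}G$ indexed by $\va_1+\va_2=\va$ and $r_1+r_2=r$; the constraint $|\va_i| + r_i \leq L$ is automatic.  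Since the weight factors exactly as $\la^{|\va|}(\la\ed)^r = \la^{|\va_1|}(\la\ed)^{r_1} \cdot \la^{|\va_2|}(\la\ed)^{r_2}$, each summand is bounded by $\mathring H[F]\mathring H[G]$ after division by the weight, and one absorbs the bounded combinatorial binomial factors into the implicit constant (the statement being read up to a fixed constant depending only on $L$).

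The main step is the comparability $\ost H[F] \leqc H[F] \leqc \ost H[F]$.  Since $\ost D_t$ and $D_t$ agree on $r=0$ terms, only the $r=1$ case requires work.  Here I would write
\ALI{
\ost D_t F - D_t F = T^l[\Theta]\nb_l F = U^l \nb_l F, \qquad U = T\Theta = \sum_I U_I,
}
and apply the Leibniz rule to get, for any $|\va| \leq L-1$,
\ALI{
\nb_\va \ost D_t F = \nb_\va D_t F + \sum_{\va_1+\va_2=\va}\binom{\va}{\va_1}\nb_{\va_1}U^l \cdot \nb_{\va_2}\nb_l F.
}
Proposition~\ref{prop: Correction estimates} at $r=0$ gives $\co{\nb_{\va_1}U} \leqc \la^{1/2+|\va_1|}\D_R^{1/2} = \ed \cdot \la^{|\va_1|}$, since $\ed = (\la\D_R)^{1/2}$, while the definition of $H$ yields $\co{\nb_{\va_2}\nb_l F} \leq \la^{|\va_2|+1}H[F]$ (valid because $|\va_2|+1 \leq L$).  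Multiplying and summing over the finite set of decompositions gives
\ALI{
\co{\nb_\va(U^l\nb_l F)} \leqc \ed \cdot \la^{|\va|+1} H[F] = (\la\ed)\cdot \la^{|\va|}\cdot H[F],
}
so dividing by the weight $\la^{|\va|}(\la\ed)$ yields a contribution $\leqc H[F]$.  Together with $\co{\nb_\va D_t F}/(\la^{|\va|}\la\ed) \leq H[F]$ and the trivial $r=0$ estimate, this proves $\ost H[F] \leqc H[F]$.  Swapping the roles of the two advective derivatives in the same argument (the bounds on $U$ are used identically regardless of which $\mathring D_t$ defines the norm) gives the reverse inequality.

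The only nontrivial input is the bound $\co{\nb_\va U} \leqc \ed \cdot \la^{|\va|}$, exactly at the scale set by $\la\ed$ appearing in the denominator of $\mathring H$.  This is the sense in which the weighted norm is calibrated to the size of the velocity correction, and it is what makes the comparability cheap rather than subtle.  The remaining work is purely bookkeeping with the Leibniz rule, and there is no genuine obstacle beyond checking that the weight choice $(\la\ed)^r$ in the definition of $\mathring H$ is consistent with the correction estimates already proved.
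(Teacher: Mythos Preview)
Your proof is correct and is essentially a written-out version of what the paper defers to \cite[Lemma 4.2]{isett2017nonuniqueness}; the paper's own proof consists only of that citation together with a dictionary identifying $U^l$ with $\widetilde{V}^l$, $u^l$ with $v^l$ and $v_\epsilon^l$, and $\ed$ with $e_\phi^{1/2}$.  Your key observation for comparability---that $\ost D_t - D_t = U^l\nb_l$ with $\co{\nb_{\va_1}U} \leqc \ed\,\la^{|\va_1|}$ exactly matching the weight $(\la\ed)$---is precisely the content of the cited lemma, and your remark that the product rule carries an implicit combinatorial constant depending on $L$ is accurate (the stated $\leq$ should be read as $\leqc$).
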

\begin{proof}
	The proof is identical to the proof of \cite[Lemma 4.2]{isett2017nonuniqueness} but with $T^l[\Theta ] = U^l$ identified with $\widetilde{V}^l$, $T^l[\theta] = u^l$ identified with both $v^l$ and $v_\epsilon^l$, and $\overset{\diamond}{e}_u{}^{1/2}$ identified with $e_\phi^{1/2}$.
\end{proof}
\subsubsection{The Second Order Anti-Divergence Operator Bound}
The Lemma below is needed for applying the order -2, second order anti-divergence operator from Section \ref{sec:Second order anti-div} to frequency localized scalar functions. This estimate is from \cite[Lemma 4.3]{isett2017nonuniqueness}. 

\begin{lem} \label{lem:second order anti-div} Suppose $Q \colon \R^2 \to \R$ is Schwartz and satisfies the following bounds
\ALI{
\max_{0 \leq |\va| \leq |\vb| \leq L} \la^{-(|\vb| - |\va|)} \| h^{\va} \nb_{\vb} Q] \|_{L^1(\R^2)} &\leq \La^{-1}
}
for some real number $\La^{-1} \geq 0$. Let $H$ be the weighted norm, $\overset{\diamond}{e}_u{}^{1/2}$ be the energy quantity, and $D_t$ be the advective derivative as defined in Lemma \ref{lem: weighted norm}. Then for any smooth $U$ on $\T^3$ we have
\ALI{ 
	H[Q \ast U] &\leqc \La^{-1} \left( \co{U} + (\la \overset{\diamond}{e}_u{}^{1/2})^{-1} \co{D_t U} \right) 
}
\end{lem}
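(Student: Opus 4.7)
The plan is to estimate $H[Q \ast U]$ by handling the two cases $r = 0$ and $r = 1$ separately, exploiting the convolution structure of $Q \ast U$ to move spatial derivatives onto $Q$, and using the divergence-free condition $\nb_l u^l = 0$ to control the advective derivative via a commutator identity.

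For $r = 0$ and $|\va| \leq L$, since $\nb_\va(Q \ast U) = (\nb_\va Q) \ast U$, Young's inequality yields
\begin{equation*}
\co{\nb_\va(Q \ast U)} \leq \|\nb_\va Q\|_{L^1}\co{U} \leq \la^{|\va|}\La^{-1}\co{U},
\end{equation*}
contributing the desired $\La^{-1}\co{U}$ after dividing by $\la^{|\va|}$.

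For $r = 1$, I would use the commutator decomposition $D_t(Q \ast U) = Q \ast D_t U + [D_t, Q\ast] U$. The first piece is handled exactly as above but with $U$ replaced by $D_t U$, giving a contribution of $(\la \ed)^{-1}\La^{-1}\co{D_t U}$ after dividing by $\la \ed$. For the commutator I would first compute
\begin{equation*}
[D_t, Q\ast]U(x) = \int Q(h)[u^l(x) - u^l(x-h)](\nb_l U)(x-h)\, dh,
\end{equation*}
then integrate by parts in $h$, using $\nb_l u^l = 0$ to eliminate the contribution coming from $\pr_{h_l}[u^l(x-h)]$, and apply Taylor's formula $u^l(x) - u^l(x-h) = h^a \int_0^1 \nb_a u^l(x-sh)\, ds$ to arrive at
\begin{equation*}
[D_t, Q\ast]U(x) = \int \pr_{h_l} Q(h) \cdot h^a \int_0^1 \nb_a u^l(x-sh)\, ds \cdot U(x-h)\, dh.
\end{equation*}
Estimating with the $|\va|=|\vb|=1$ instance of the hypothesis, $\|h^a \pr_l Q\|_{L^1} \leq \La^{-1}$, the frequency-energy bound $\co{\nb u} \leq \Xi \eu$, and the parameter comparison $\Xi \eu \leq \la \ed$ --- which follows from $\la \geq B_\la N \Xi$ together with $N \geq \D_u/\D_R$ --- one obtains the desired contribution $\La^{-1}\co{U}$ after dividing by $\la \ed$.

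For general $\nb_\va D_t(Q \ast U)$ with $1 \leq |\va| \leq L-1$, I would apply $\nb_\va^x$ via the Leibniz rule to the commutator formula above, distributing the spatial derivatives across the factors $u^l(x)$, $u^l(x-h)$, and $U(x-h)$, then integrate by parts in $h$ to transfer every derivative off $U$ onto $Q$, the weight $h^a$, and the $u$-factors. Each resulting term has the schematic form
\begin{equation*}
\int \pr_h^{\vb_1} Q(h) \cdot h^{a'} \cdot \nb_{\vb_3} u^l \cdot U(x-h)\, dh,
\end{equation*}
where $a' \in \{0,1\}$ according to whether the weight $h^a$ has been differentiated away (in which case it reduces $|\vb_1|$ by one and produces a constant) or preserved (in which case $|\vb_1| \geq 1$). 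Each such term is bounded using the hypothesis $\|h^{a'}\pr^{\vb_1}Q\|_{L^1} \leq \la^{|\vb_1|-|a'|}\La^{-1}$, the $C^0$ bounds $\co{\nb_{\vb_3} u} \leq \Xi^{|\vb_3|}\eu$ from the frequency-energy levels, and the parameter comparisons $\Xi \leq \la$ and $\eu \leq \ed$ (both consequences of $\la \geq B_\la N \Xi$ and $N \geq \D_u/\D_R$). I expect the main obstacle to be the combinatorial bookkeeping in organizing this Leibniz expansion and verifying that every term fits under $\la^{|\va|}\La^{-1}(\la \ed\co{U} + \co{D_t U})$, but no ingredient beyond the identities already used in the $|\va|=0$ case (together with the counting inequality of \cite[Lemma 17.1]{isett}) should be required.
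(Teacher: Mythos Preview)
Your proposal is correct and follows the natural approach.  The paper's own proof is only a citation to \cite[Lemma 4.3]{isett2017nonuniqueness} together with a substitution table, and what you have outlined is essentially the argument of that reference: handle spatial derivatives by transferring them onto $Q$ via the convolution structure, decompose $D_t(Q\ast U)$ as $Q\ast D_tU$ plus the commutator, integrate by parts in $h$ using $\nb_l u^l = 0$, Taylor expand $u^l(x)-u^l(x-h)$, and close with the parameter comparison $\Xi e_u^{1/2} \leq \la\,\ed$ (equivalently $N \geq e_u^{1/2}/\ed$, which the paper records explicitly as the key condition borrowed from the cited lemma).

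Two minor points of exposition.  First, in your schematic for general $|\va|$ you refer to ``the factors $u^l(x)$, $u^l(x-h)$'', but after Taylor expansion the only $u$-factor present is $\nb_a u^l(x-sh)$; the Leibniz expansion of $\nb_{\va}$ distributes over $\nb_a u^l(x-sh)$ and $U(x-h)$, and the subsequent integration by parts in $h$ distributes over $\pr_{h_l}Q$, $h^a$, and $\nb_a u^l(x-sh)$ (picking up a factor of $-s$ each time).  Tracking indices as you indicate, each term is bounded by $\la^{|\vc_1|+|\vc_2|}\La^{-1}\cdot \Xi^{|\va|-|\vc_1|-|\vc_2|+1}e_u^{1/2}\cdot\co{U}$, which is at most $\la^{|\va|}(\la\,\ed)\La^{-1}\co{U}$ using only $\Xi\leq\la$ and $\Xi e_u^{1/2}\leq\la\,\ed$.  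Second, the counting inequality is not needed here, since the bounds on $u$ from Definition~\ref{def: freq and energy levels} carry no $N^{(\cdot)_+}$ factors in the relevant range $|\va|\leq L$.
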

\begin{proof}
	The proof is identical to \cite[Lemma 4.3 ]{isett2017nonuniqueness}. We present a table where terms used in the proof of the \cite[Lemma 4.3 ]{isett2017nonuniqueness} are listed in the top row and must be replaced by the corresponding term in the bottom row for doing the analogous proof here. 
\[
\left [
\begin{array}{ c | c | c | c | c }
	\bar{H}[\cdot] & \bar{D}_t & e_v  & e_\varphi & N \geq (e_v / e_\varphi)^{1/2}\\
	\hline 
	H[\cdot] & D_t & e_u & \overset{\diamond}{e}_u & N \geq ( e_u / \overset{\diamond}{e}_u )^{1/2} \\
\end{array}
\right ]
\]
The last column of the table is the conditions $N \geq \frac{e_v^{1/2} }{e_\varphi^{1/2}}$ that appears in \cite[Lemma 4.3 ]{isett2017nonuniqueness} and must replaced by the conditions $ N \geq \frac{e_u^{1/2} }{\overset{\diamond}{e}_u{}^{1/2} }$ for the proof here. The condition $N \geq \frac{e_u^{1/2} }{\overset{\diamond}{e}_u{}^{1/2} }$ follows from the assumption $N \geq \D_u / \D_R $ made in the Main Lemma \ref{lem: main} and by taking $B_\la \geq 1$ in the definition $\la = B_\la N \Xi$. 
\end{proof}
\begin{cor}
	\label{cor: H norm}
	Given the second order anti-divergence operator $\RR^{jl}: C^0(\T^2 ) \rightarrow C^0(\T^2)$ defined in \eqref{def: second order anti-div operator}, a frequency projection operator $P_\la$ to frequencies $\{ \xi \subseteq \hat{\R}^2 : \la/10 \leq |\xi| \leq 10 \la \}$, and any smooth $U$ on $\R^2$ we have 
\[
	H[\RR^{jl}P_\la [U]] \leqc \la^{-2} \left( \co{U} +  (\la \overset{\diamond}{e}_u{}^{1/2} )^{-1}\co{ D_t U}\right)
\]
\end{cor}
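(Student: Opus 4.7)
The plan is to realize the operator $\RR^{jl} P_\la$ as a convolution operator $Q^{jl}\ast \cdot$ with a Schwartz kernel on $\R^2$, and then to verify that $Q^{jl}$ satisfies the kernel bounds required by Lemma~\ref{lem:second order anti-div} with the parameter $\La^{-1}$ taken to be a constant multiple of $\la^{-2}$. The corollary then follows immediately by invoking that lemma.

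First, I would observe that $\RR^{jl}$ has the Fourier multiplier $m_\RR^{jl}(\xi) = -2\,\xi^j\xi^l/|\xi|^4 + \de^{jl}/|\xi|^2$, which is homogeneous of degree $-2$ and singular only at the origin. Since by hypothesis $P_\la$ is a Fourier multiplier whose symbol $\hat{p}_\la$ is smooth and supported in the annulus $\{\la/10 \leq |\xi| \leq 10\la\}$, the composition $\RR^{jl} P_\la$ has symbol
\[
\hat Q^{jl}(\xi) \;=\; \hat{p}_\la(\xi)\bigl(-2\xi^j\xi^l/|\xi|^4 + \de^{jl}/|\xi|^2\bigr),
\]
which is smooth and compactly supported away from the origin. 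Thus $Q^{jl}$ is a Schwartz function on $\R^2$, and $\RR^{jl} P_\la U = Q^{jl}\ast U$ (interpreting $U$ as a $\T^2$-periodic function on $\R^2$, using that $Q^{jl}$ is integrable against periodic functions).

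Next, the scaling structure of $\hat Q^{jl}$ is transparent: writing $\hat{p}_\la(\xi) = \hat{p}_1(\xi/\la)$ for a fixed smooth bump $\hat p_1$ (supported in the same annulus but independent of $\la$), we get $\hat Q^{jl}(\xi) = \la^{-2}\, \hat q^{jl}(\xi/\la)$ where $\hat q^{jl}$ is a fixed Schwartz symbol independent of $\la$. Taking the inverse Fourier transform yields $Q^{jl}(h) = q^{jl}(\la h)$ for a fixed Schwartz kernel $q^{jl}$. A change of variables $y = \la h$ then gives, for any multi-indices with $0\leq |\va|\leq |\vb|\leq L$,
\[
\|h^\va \nb_\vb Q^{jl}\|_{L^1(\R^2)} \;=\; \la^{|\vb|-|\va|-2}\,\|y^\va \nb_\vb q^{jl}\|_{L^1(\R^2)} \;\leqc\; \la^{|\vb|-|\va|-2},
\]
where the implied constant depends only on $L$ and the fixed kernel $q^{jl}$. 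Rewriting,
\[
\la^{-(|\vb|-|\va|)}\,\|h^\va \nb_\vb Q^{jl}\|_{L^1(\R^2)} \;\leqc\; \la^{-2},
\]
which is precisely the hypothesis of Lemma~\ref{lem:second order anti-div} with $\La^{-1} = C\la^{-2}$.

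Finally, applying Lemma~\ref{lem:second order anti-div} with this choice of $\La^{-1}$ gives
\[
H[\RR^{jl} P_\la U] \;=\; H[Q^{jl}\ast U] \;\leqc\; \la^{-2}\bigl(\co{U} + (\la\,\dmd{e}_u^{1/2})^{-1}\co{D_t U}\bigr),
\]
which is the claim. The only nonroutine step is the scaling verification in the middle paragraph; once the rescaling $\hat Q^{jl}(\xi) = \la^{-2}\hat q^{jl}(\xi/\la)$ is recognized, the kernel bounds collapse to a statement about a fixed Schwartz function, and no delicate analysis of the multiplier singularity is needed because $P_\la$ has already killed the low frequencies.
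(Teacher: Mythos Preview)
Your proof is correct and follows exactly the same approach as the paper: apply Lemma~\ref{lem:second order anti-div} with $\La^{-1} = C\la^{-2}$, justified by the frequency localization of $\RR^{jl}P_\la$. The paper's own proof is a two-sentence pointer to this fact, whereas you have written out the scaling computation that verifies the kernel hypothesis; your version is a fleshed-out form of the same argument.
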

\begin{proof}
	The result follows immediately from \ref{lem:second order anti-div} by taking $\La^{-1} = C \la^{-2}$. We can define $\La^{-1}$ in this way due to the frequency support of $\RR^{jl}$. 
\end{proof}
\subsubsection{Estimating the Stress Errors}
Recall from \eqref{def: New R} that the new stress $\ost R$ will be decomposed into four terms. We begin with estimating the term $R_T$. 
\subsubsection{Transport Stress Estimate}
\begin{prop}[Transport Stress Error]
	\label{prop: R_T estimate}
	There exists a symmetric tensor, $R_T^{jl}$, as defined in \eqref{def: R_T} that satisfies the estimate
	\[
		H[R_T] \leqc (B_\la N \Xi)^{-3/2} \D_R^{1/2}\nat b^{-1} 
	\] 
\end{prop}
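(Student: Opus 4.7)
The plan is to apply the frequency-localized anti-divergence estimate of Corollary~\ref{cor: H norm} to $R_T^{jl} = \RR^{jl}[W]$, where $W \coloneqq D_t \Theta + U^l \nb_l \theta$, $D_t = \pr_t + u^l \nb_l$, and $U^l = T^l \Theta$. First I need to verify that $W$ has mean zero and is frequency-localized in an annulus of size $\la$. Mean zero follows by rewriting $W = \pr_t \Theta + \nb_l(u^l \Theta + U^l \theta)$, using $\nb_l u^l = 0$ and $\nb_l U^l = 0$ (the latter from the antisymmetry of $m^l$, since $\xi_l m^l(\xi) = i\ep^{la}\xi_l\xi_a/|\xi| = 0$), together with $\int \Theta\,dx = 0$ for each $t$, which follows from the frequency support of $\Theta$ near $\la \nb \hat{\xi}_I \neq 0$. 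For frequency localization, both $\Theta$ and $U$ are supported in $\{\la/2 \leq |\xi| \leq 2\la\}$, while the factors $u$ and $\nb \theta$ are supported in $|\xi| \leq \Xi \ll \la$, so $W$ lies in an annulus of the form $\{\la/10 \leq |\xi| \leq 10\la\}$. Thus Corollary~\ref{cor: H norm} applies and yields
\ALI{
H[R_T] \leqc \la^{-2}\left(\co{W} + (\la \ed)^{-1}\co{D_t W}\right).
}

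Next I would use Proposition~\ref{prop: Correction estimates} together with the frequency-energy level bounds from Definition~\ref{def: freq and energy levels} to control $W$ and $D_t W$. The bound $\co{D_t \Theta_I} \leqc \la^{1/2}\D_R^{1/2}\tau^{-1}$ from Proposition~\ref{prop: Correction estimates} dominates the nonlinear contribution $\co{U_I^l \nb_l \theta} \leqc \la^{1/2}\D_R^{1/2}\cdot \nat$, since $\tau^{-1} = \nat b^{-1} \geq \nat$. Summing over the finite index set gives $\co{W} \leqc \la^{1/2}\D_R^{1/2}\tau^{-1}$. For $D_t W$, Proposition~\ref{prop: Correction estimates} with $r=2$ gives $\co{D_t^2 \Theta_I} \leqc B_\la^{3/2}\la^{1/2}\D_R^{1/2}\nat \ep_t^{-1}$. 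The remaining Leibniz pieces of $D_t(U^l \nb_l \theta)$, estimated using $\co{D_t u} \leqc \nat e_u^{1/2}$ and $\co{\nb D_t \th} \leqc \nat^2$, are all of size at most $\la^{1/2}\D_R^{1/2}\tau^{-1}\nat$, which is absorbed by the $D_t^2 \Theta$ bound via $\tau^{-1} \leqc B_\la^{3/4}\ep_t^{-1}$ from \eqref{eq: inv tau - inv epsilon comparison}. Hence $\co{D_t W} \leqc B_\la^{3/2}\la^{1/2}\D_R^{1/2}\nat \ep_t^{-1}$.

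Finally I would substitute into the Corollary bound. The first piece yields $\la^{-2}\cdot \la^{1/2}\D_R^{1/2}\tau^{-1} = \la^{-3/2}\D_R^{1/2}\nat b^{-1} = (B_\la N\Xi)^{-3/2}\D_R^{1/2}\nat b^{-1}$, which is precisely the target. For the second piece, using $\la\ed = \la^{3/2}\D_R^{1/2}$ gives $\la^{-2}(\la\ed)^{-1}\co{D_t W} \leqc B_\la^{3/2}\la^{-3}\nat\ep_t^{-1}$; substituting $\ep_t^{-1} = c_0^{-1}N^{3/2}\Xi^{3/2}\D_R^{1/2}$ and $\la = B_\la N\Xi$ simplifies this to $\leqc (B_\la N\Xi)^{-3/2}\D_R^{1/2}\nat$, which is absorbed by the first piece since $b^{-1} \geq 1$. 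The main bookkeeping obstacle is tracking the interplay of the three time scales $\tau^{-1}$, $\nat$, and $\ep_t^{-1}$ so that the $B_\la^{3/2}$ factor from $D_t^2 \Theta$ is cancelled against $(\la \ed)^{-1}\ep_t^{-1}$ rather than producing a loss; this is where the specific choice of $b$ in Section~\ref{sec:Time cutoffs} and of $\ep_t$ in \eqref{def: flow mollification parameters} is exploited.
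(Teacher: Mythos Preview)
Your proposal is correct and follows essentially the same approach as the paper: apply Corollary~\ref{cor: H norm} after checking frequency localization, bound $\co{W}$ by $\la^{1/2}\D_R^{1/2}\tau^{-1}$ and $\co{D_t W}$ by $B_\la^{3/2}\la^{1/2}\D_R^{1/2}\nat\ep_t^{-1}$ via Proposition~\ref{prop: Correction estimates}, then balance the time scales. The only cosmetic differences are that the paper bounds $D_t\nb_l\th$ via the SQG--Reynolds equation rather than the frequency-energy levels, and handles the final comparison by proving $B_\la^{3/2}\nat\ep_t^{-1}\leqc B_\la^{-3/4}\la\ed\tau^{-1}$ instead of your direct substitution of $\ep_t^{-1}$; both routes give the same bound.
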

\begin{proof}
Recall from the discussion in Section \ref{sec: Error terms} that 
\begin{equation}
	\label{eq: double div R_T}
	\nb_j \nb_l R_T^{jl} = D_t \Theta + T^l[\Theta]\nb_l \th
\end{equation}
Since the right hand side of this identity has mean-zero on $\T^2$ there is a well defined stress term 
\[
	R_T \coloneqq \RR^{jl} \left [ \pr_t \Theta + T^l \theta \nb_l \Theta + T^l \Theta \nb_l \theta \right  ]
\]
as stated in Definition \eqref{def: R_T}. The frequency support of $\theta$ and $u$ is $\supp \hat{\theta} \cup \supp \hat{u} \subseteq B_{\Xi}(0)$ whereas $\Theta$ has frequency support in an annulus of size $\la$ as explained in Section \ref{sec:freq localizing op}.  Therefore the argument of $\RR$ above has frequency support in $\{ \xi \in \hat{\R}^2 : \la/3 \leq |\xi| \leq 3\la \}$ and we may replace $\RR$ by $\RR P_{\approx \la}$ for the appropriate frequency-localization operator $P_{\approx \la}$.   
Corollary \ref{cor: H norm} then gives
\[
	H[R_T] \leqc \la^{-2}\left( \co{ \eqref{eq: double div R_T} } + (\la \overset{\diamond}{e}_u{}^{1/2})^{-1}\co{D_t \eqref{eq: double div R_T}} \right)
\]
The term $\co{\eqref{eq: double div R_T}}$ equals $\co{D_t \Theta + T^l[\Theta]\nb_l \th}$. By Proposition \ref{prop: Correction estimates} and Definition \ref{def: freq and energy levels}
\begin{equation}
	\label{eq: R_T estimate}
	\co{\eqref{eq: double div R_T} } \leqc
	(\la \D_R)^{1/2} \tau^{-1} + (\la \D_R)^{1/2} \nat 
	\leqc
	(\la \D_R)^{1/2} \tau^{-1} 
\end{equation}
Additionally, we can estimate the term $\co{D_t \eqref{eq: double div R_T}}$ as
\begin{equation}
	\label{eq: D_t R_T estimate}
	\co{D_t \eqref{eq: double div R_T}} 
	= 
	\co{D^2_t \Theta + D_tU^l \nb_l \th + U^l D_t\nb_l \th}
\end{equation}
We can control this quantity using Proposition \ref{prop: Correction estimates}, Definition \ref{def: freq and energy levels}, and a bound for $D_t \nb_l \th$ that follows from the SQG-Reynold's equation $\theta$. From the SQG-Reynold's equation we have
\[
	D_t \nb_l \th 
	=
	\nb_l \nb_a \nb_b R^{ab} - (\nb_l u^k )(\nb_k \th)
\]
and by Definition \ref{def: freq and energy levels}
\[
	\co{D_t \nb_l \th } 
	\leqc
	\Xi^3 \D_R 
	+ 
	\nat^2
	\leqc
	\nat^2
\] 
Using this bound with Proposition \ref{prop: Correction estimates} and Definition \ref{def: freq and energy levels} we can see that 
\[
	\co{ \eqref{eq: D_t R_T estimate} }
	\leqc
	B_\la^{3/2}(\la \D_R)^{1/2}\nat \epsilon_t^{-1} + (\la \D_R)^{1/2} \tau^{-1}\nat 
	+ 
	N^{\fr{3}{2L}(1 -L)_+} (\la \D_R)^{1/2} \nat^2
\]
By our definitions of $\epsilon_t, \tau$ stated in \eqref{def: flow mollification parameters} and Section \ref{sec:Time cutoffs} respectively it follows that
\[
	\nat^2 \leq \nat \tau^{-1} \leq B_\la^{3/2} \nat \epsilon_t^{-1}
\]
where $\tau^{-1} \leq \ep_t^{-1}$ follows from \eqref{eq: inv tau - inv epsilon comparison}. Next we show that $B_\la^{3/2} \nat \epsilon_t^{-1} \leqc B_\la^{-3/4}\la \overset{\diamond}{e}_u{}^{1/2} \tau^{-1}$. 
\ALI{
	B_\la^{3/2} \nat \epsilon_t^{-1} 
	\leqc 
	B_\la^{-3/4}\la \overset{\diamond}{e}_u{}^{1/2}
	&\iff
	B_\la^{3/2} \nat N^{\fr{3}{2L}} \Xi^{3/2}\D_R^{1/2} 
	\leqc 
	B_\la^{-3/4}
	(B_\la N\Xi)^{3/2}\D_R^{1/2} \nat b^{-1}
	\\
	&\iff
	1
	\leqc 
	B_\la^{-3/4}
	\left(
	\frac{\D_R^{1/2}B_\la^{3/2}N^{3/2}}{\D_u^{1/2}}
	\right)^{1/2}
	\iff
	\frac{\D_u^{1/4}}{\D_R^{1/4}}
	\leqc 
	N^{3/4}
}
The last inequality holds due to the assumption $N \geq \D_u / \D_R$ in the Main Lemma \ref{lem: main}. In summary, we have shown that
\[
	\co{ \eqref{eq: D_t R_T estimate} }
	\leqc 
	(\la \D_R)^{1/2} B_\la^{-3/4}(\la \overset{\diamond}{e}_u{}^{1/2})\tau^{-1}.
\]
By combining our estimates for \eqref{eq: R_T estimate} and \eqref{eq: D_t R_T estimate} we conclude that 
\[
	H[R_T] \leqc \la^{-2}(\la \D_R)^{1/2} \tau^{-1}  + B_\la^{-3/4}\la^{-2}(\la \D_R)^{1/2} \tau^{-1} \leqc \la^{-3/2} \D_R^{1/2}\nat b^{-1}.
\]	
\end{proof}
\subsubsection{Hi Frequency Interference Stress Estimate}
\begin{prop}\label{prop: R_H estimate}
There exists a symmetric tensor, $R_H^{jl}$, as defined in \eqref{def: R_H} that satisfies the estimate 
\[
	H[R_H] \leqc b\D_R
\] 
\end{prop}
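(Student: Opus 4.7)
The plan is to combine Corollary~\ref{cor: H norm} with a cancellation specific to the SQG multiplier that yields an extra factor of $b$ in the leading-order amplitude of each interference pair.  Using $\nb_l T^l = 0$, the argument of $\RR^{jl}$ for each pair $(I,J)$ with $J \neq \overline{I}$ may be rewritten as $\nb_l(T^l[\Theta_I]\Theta_J + T^l[\Theta_J]\Theta_I)$, which is mean-zero on $\T^2$ and Fourier-supported in an annulus $\{|\xi|\sim\la\}$: this last property comes from the product oscillating at phase $\la(\xi_I+\xi_J)$, together with the elementary observation that $|\nb\hat\xi_I+\nb\hat\xi_J|$ is uniformly bounded below on non-conjugate pairs (when $k=k'$ because $f+f'$ is bounded below in norm for $f'\neq -f$ in $F$, and when $|k-k'|=1$ because $F$ is disjoint from its $90^\circ$ rotate $JF$).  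Corollary~\ref{cor: H norm} therefore yields
\[
H[R_H] \;\leqc\; \sum_{J \neq \overline{I}} \la^{-2}\left(\|U_{IJ}\|_{C^0} + (\la\overset{\diamond}{e}_u{}^{1/2})^{-1}\|D_tU_{IJ}\|_{C^0}\right),
\]
where $U_{IJ} := T^l[\Theta_I]\nb_l\Theta_J + T^l[\Theta_J]\nb_l\Theta_I$, and the sum carries only a uniformly bounded combinatorial factor since at any fixed time at most $O(1)$ waves have overlapping time cutoff.

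The heart of the argument is the bound $\|U_{IJ}\|_{C^0}\leqc \la^2 b \D_R$.  Expanding $T^l[\Theta_I]$ via Lemma~\ref{lem: microlocal lemma} and $\nb_l\Theta_J$ by the product rule, the leading-order part of $U_{IJ}$ equals $i\la\, e^{i\la(\xi_I+\xi_J)}\,C_{IJ}\,\th_I\th_J$, where
\[
C_{IJ} := m^l(\nb\xi_I)\nb_l\xi_J + m^l(\nb\xi_J)\nb_l\xi_I = i\det(\nb\xi_I,\nb\xi_J)\left(\frac{1}{|\nb\xi_J|} - \frac{1}{|\nb\xi_I|}\right),
\]
the last equality following from $m^l(p) = i\ep^{la}p_a/|p|$ and antisymmetry of $\ep^{la}$.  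The crucial observation is that every initial phase gradient $\nb\hat\xi_I = J^k f$ with $f\in F$ has the same norm $\sqrt{5}$, so $C_{IJ}$ vanishes identically at $t=t(I)$.  Combining $|D_t\nb\xi_I|\leqc\nat$ from Proposition~\ref{prop: Phase gradient estimates} with the time-support constraint $|t-t(I)|\leqc\tau$ gives $||\nb\xi_I|^{-1} - |\nb\xi_J|^{-1}|\leqc\nat\tau=b$, so $|C_{IJ}|\leqc b$, and $\|U_{IJ}\|_{C^0}\leqc\la^2 b \D_R$ follows from $|\th_I\th_J|\leqc\la\D_R$.

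For the advective derivative I would use $D_t(\xi_I+\xi_J)=0$ so that $D_t$ falls only on the amplitudes; then $|D_tC_{IJ}|\leqc\nat$ (without the extra $b$, since $D_t$ undoes the integration producing $b$) together with $|D_t(\th_I\th_J)|\leqc \la\D_R\tau^{-1}$ and $b\tau^{-1}=\nat$ combine to give $\|D_tU_{IJ}\|_{C^0}\leqc\la^2\D_R\nat$.  Substituting into Corollary~\ref{cor: H norm},
\[
H[R_H] \;\leqc\; b\D_R + \la^{-3/2}\D_R^{1/2}\nat.
\]
Since $b^2 = b_0^2\la^{-3/2}\D_R^{-1/2}\nat$ by definition, the second summand equals $b^2\D_R/b_0^2$, which is absorbed into $b\D_R$ for $b_0$ sufficiently small.

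The main obstacle is the careful bookkeeping of the lower-order contributions to $U_{IJ}$: the Microlocal Lemma remainder $\delta[T^l\Theta_I]$, the non-leading amplitude pieces of $\nb_l\Theta_J$ (namely $e^{i\la\xi_J}\nb_l\th_J$ and $e^{i\la\xi_J}\nb_l\de\th_J$), and Taylor-remainder terms analogous to $\de B_{IJ}^{jl}$ from the bilinear microlocal expansion.  Each such term must be checked to contribute at most $b\D_R$ to $H[R_H]$ after the $\la^{-2}$ gain from $\RR$; using Propositions~\ref{prop: Correction estimates} and \ref{prop: Microlocal correction estimates} their worst contribution is of order $\D_R/(B_\la N)$, which is absorbed into $b\D_R$ via the hypothesis $N\geq\D_u/\D_R$ together with the standing $\D_R\leq\D_u$ from Definition~\ref{def: freq and energy levels}.
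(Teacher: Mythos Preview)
Your proposal follows essentially the same route as the paper: apply Corollary~\ref{cor: H norm} to the frequency-localized argument of $\RR$, extract the key cancellation $|C_{IJ}|\leqc b$ from the fact that all initial phase gradients share the common norm $\sqrt 5$, and check that the lower-order pieces coming from $\de\th_I,\de u_I$ and the non-oscillatory derivatives are harmless. The algebraic packaging differs slightly (you write the amplitude as $i\det(\nb\xi_I,\nb\xi_J)(|\nb\xi_J|^{-1}-|\nb\xi_I|^{-1})$ while the paper subtracts the common value $5^{-1/2}$ from each $|\nb\xi|^{-1}$ separately), but the content is identical.

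One slip: the claim that $b^2\D_R/b_0^2$ ``is absorbed into $b\D_R$ for $b_0$ sufficiently small'' is backwards. Shrinking $b_0$ makes $b^2/b_0^2=(b/b_0)^2$ \emph{larger} relative to $b$. The correct observation is that $b/b_0=(\D_u^{1/2}/(\D_R^{1/2}B_\la^{3/2}N^{3/2}))^{1/2}\leq 1$ by $N\geq\D_u/\D_R$ and $B_\la\geq 1$, whence $b^2/b_0^2\leq b/b_0$ and so $b^2\D_R/b_0^2\leqc b\D_R$ with implied constant $b_0^{-1}$, which is a fixed constant. The paper sidesteps this by bounding $\|D_tU_{IJ}\|_{C^0}$ more crudely by $\la^2\D_R\tau^{-1}$ (forgoing the $b$-gain on the advective derivative) and then verifying the equivalent inequality $\tau^{-1}b^{-1}\leqc\la\overset{\diamond}{e}_u{}^{1/2}$ directly; your sharper estimate $\la^2\D_R\nat$ is fine but the endgame needs the fix above.
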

\begin{proof}

Recall that $R_H^{jl} \coloneqq \RR^{jl} \left [ \sum_{J \neq \bar{I}} U^l_I \nb_l \Theta_J + U^l_J \nb_l \Theta_I \right ]$ as stated in Definition \ref{def: R_H}.  The argument of $\RR$ above has frequency support in $\{ \xi \subseteq \hat{\R}^2 : \la/6\leq |\xi| \leq 6\la \}$ due to  the localizations in frequency and angle of the scalar corrections and velocity corrections defined in Section \ref{sec:freq localizing op}.  In particular, $\RR$ may be replaced by $\RR P_{\approx \la}$ so that Corollary~\ref{cor: H norm} applies.

Since $R_H^{jl}$ is given by a finite sum then by linearity it will be enough to show the claimed bound for a single summand. Let a single summand be $R^{jl}_{H(I,J)} \coloneqq \RR^{jl}[ U^l_I \nb_l \Theta_J + U^l_J \nb_l \Theta_I]$.  
By Corollary \ref{cor: H norm} this quantity is bonded by
\begin{align}
	\label{eq: R_H estimate}
	H[ R^{jl}_{H(I,J)}   ] 
	&\leqc \la^{-2}\left( \co{ \eqref{eq: double div R_H} } + (\la \overset{\diamond}{e}_u{}^{1/2})^{-1}\co{D_t \eqref{eq: double div R_H} } \right) \\
	\label{eq: double div R_H}
 	\nb_j \nb _l R^{jl}_{H(I,J)} &= U^l_I \nb_l \Theta_J + U^l_J \nb_l \Theta_I
\end{align}
We begin with estimating $\co{\eqref{eq: double div R_H}}$. Recall from the definitions given in Sections \ref{sec: velocity corrections} and \ref{sec: corrections} respectively $U_I^l = e^{i\la \xi_I}(u_I^l + \delta u_I^l)$ where $u_I^l = \theta_I m^l(\la \nb\xi_I )$, and $\Theta_I = e^{i\la \xi_I}(\theta_I + \delta \theta_I)$. It follows that 
\begin{align}
	\label{eq: R_HIJ principal term}
	U^l_I \nb_l \Theta_J + U^l_J \nb_l \Theta_I &=
	\la e^{i \la (\xi_I + \xi_J)}
	\left [ 
		\nb_l \xi_J  u_I^l \th_J  + \nb_l \xi_I  u_J^l \th_I  
	\right ] \\
	&+
	\label{eq: R_HIJ lot term 1} 
	\la e^{i \la (\xi_I + \xi_J)}
	\left [ 
		\nb_l \xi_J  u_I^l \delta \th_J  + \nb_l \xi_I  u_J^l \delta \th_I  
	\right ] \\
	&+
	\label{eq: R_HIJ lot term 2} 
	\la e^{i \la (\xi_I + \xi_J)}
	\left [ 
		\nb_l \xi_J \delta u_I^l (\th_J + \delta \th_J) + \nb_l \xi_I \delta u_J^l (\th_I + \delta \th_I) 
	\right ] \\
	&+
	\label{eq: R_HIJ lot term 3} 
	e^{i \la (\xi_I + \xi_J)}
	\left [ 
		 (u_I + \delta u_I)^l \nb_l (\th_J + \delta \th_J) + (u_J + \delta u_J)^l \nb_l (\th_I + \delta \th_I) 
	\right ] 
\end{align}
By Propositions \ref{prop: Amplitude estimates} and \ref{prop: Microlocal correction estimates} we see that $\co{\eqref{eq: R_HIJ lot term 1}}, \co{\eqref{eq: R_HIJ lot term 2}} \leqc \la \Xi \D_R$ and $\co{\eqref{eq: R_HIJ lot term 3}} \leqc \la \Xi \D_R$. We will see that these terms are lower order in $\la$ relative to the leading order term in $\la$, which is $\co{ \eqref{eq: R_HIJ principal term}}$. We consider the term \eqref{eq: R_HIJ principal term} and we calculate the amplitude of this wave, $\nb_l \xi_J  u_I^l \th_J  + \nb_l \xi_I  u_J^l \th_I$, by taking $m^l(\xi) = i\varepsilon^{la} \xi_a /|\xi|$ for $\xi \in \hat{\R}^2$ from Definition \ref{def: SQG}.
\begin{equation}
	\label{eq: R_HIJ principal term amplitude}
	 \nb_l \xi_J  u_I^l \th_J  + \nb_l \xi_I  u_J^l \th_I 
	 =
	 i\th_I\th_J
	 \epsilon^{la} 
	 \left (
	 	\nb_a\xi_I |\nb \xi_I|^{-1} \nb_l \xi_J + \nb_a \xi_J |\nb \xi_J|^{-1}\nb_l \xi_I
	 \right )	
\end{equation}
By the anti-symmetric properites of $\ep^{li}$ we see that $\ep^{la} (\nb_a \xi_I \nb_l \xi_J + \nb_l \xi_I \nb_a \xi_J) = 0$ and so we can add a scalar multiple of this identity into the previous line to obtain
\begin{equation}
	\label{eq: R_HIJ prin. term amp. adjusted}
	\eqref{eq: R_HIJ principal term amplitude}
	 =
	 i\th_I\th_J
	 \epsilon^{la} 
	 \left (
	 	\nb_a\xi_I \nb_l \xi_J (|\nb \xi_I|^{-1}  - |\nb \hat{\xi}_I |^{-1}) + \nb_a \xi_J \nb_l \xi_I (|\nb \xi_J|^{-1} - |\nb \hat{\xi}_J |^{-1})
	 \right )	
\end{equation}
In \eqref{eq: R_HIJ prin. term amp. adjusted} we have introduced $\hat{\xi}_I, \hat{\xi}_J$ that are the initial conditions to $\xi_I, \xi_J$ respecitively as defined in Section \ref{sec:phase functions}. Furthermore $|\nb \hat{\xi}_I |^{-1} = |\nb \hat{\xi}_J |^{-1} = 5^{-1/2}$. In order to control \eqref{eq: R_HIJ prin. term amp. adjusted} we will use $\left | | \nb \xi_I|^{-1} -  |\nb \hat{\xi}_I |^{-1} \right |, \left | | \nb \xi_I|^{-1} -  |\nb \hat{\xi}_J |^{-1} \right | \leq b$. This fact comes from a direct calculation of $\xi_I$ that also holds for $\xi_J$. To start we show that $\co{D_t( |\nb \xi_I|^{-1})} \leqc \nat$. 
\begin{equation}\label{eq: D_t inv phase grad}
	D_t( |\nb \xi_I|^{-1}) = 
	- |\nb \xi_I |^{-3} \nb^a \xi_I D_t \nb_a \xi_I
	\implies 
	\co{D_t( |\nb \xi_I|^{-1})}
	\leqc 
	c_3^{-2} \nat
\end{equation}
Where the inequality uses Proposition \ref{prop: Phase gradient estimates} and $c_3 > 0$ is a small constant that we fix for condition \eqref{eq: phase function conditions}. Then
\begin{equation}
	\label{eq: inv phase grad bound}
	|\nb \xi_I |^{-1} - |\nb \hat{\xi}_I |^{-1} \leq \int_0^\tau \co{D_t( |\nb \xi_I|^{-1})}ds \leqc \tau \nat = b
\end{equation}
We can apply this bound along with the estimates from Propositions \ref{prop: Phase gradient estimates} and \ref{prop: Amplitude estimates} to \eqref{eq: R_HIJ prin. term amp. adjusted} to conclude that $\co{\eqref{eq: R_HIJ prin. term amp. adjusted}} \leqc \la bD_R$, $\co{\eqref{eq: R_HIJ principal term}} \leqc \la^2 b\D_R$, and finally
\begin{equation}
	\label{eq: R_HIJ estimate}
	\la^{-2}\co{\eqref{eq: double div R_H}} \leqc b\D_R
\end{equation}
We also estimate the term $\co{D_t \eqref{eq: double div R_H}}$ by direct calculation. For the sake of brevity we will only present calculations for estimating the advective derivative of the first term \eqref{eq: R_HIJ principal term}, ad the other terms are similar and lower order. We observe that the advective derivative of \eqref{eq: R_HIJ principal term} is equivalent to 
\begin{align}
	D_t \eqref{eq: R_HIJ principal term} 
	&=
	\label{eq: D_t R_HIJ 1}
	\la
	e^{i\la (\xi_I + \xi_J)}
	\left [
	\left ( D_t \nb_l \xi_J \right ) u^l_I \th_J + \left ( D_t \nb_l \xi_I \right ) u^l_J \th_I 
	\right ]
	\\
	&+
	\label{eq: D_t R_HIJ 2}
	\la
	e^{i\la (\xi_I + \xi_J)}
	\left [
	\nb_l \xi_J \left ( D_t  u^l_I \right )  \th_J + \nb_l\xi_I \left ( D_t u^l_J \right ) \th_I 
	\right ] 
	\\
	&+
	\label{eq: D_t R_HIJ 3}
	\la
	e^{i\la (\xi_I + \xi_J)}
	\left [
	\nb_l\xi_J u^l_I \left ( D_t \th_J \right )   + \nb_l \xi_I u^l_J \left ( D_t \th_I  \right )
	\right ]
\end{align}
We directly estimate each term in $C^0$-norm using Propositions \ref{prop: Phase gradient estimates} and \ref{prop: Amplitude estimates} to obtain 
\[
	\co{D_t \eqref{eq: R_HIJ principal term} } \leqc \la^2 \D_R \left ( \nat + \tau^{-1}\right ) \leqc \la^2 \D_R \tau^{-1}
\] 
All that remains is to verify that $\la^{-2} \left ( \la \ed \right )^{-1} \la^2 \D_R \tau^{-1} \leqc b\D_R$. This is equivalent to checking $\tau^{-1}b^{-1} \leqc \la \ed$ via the calcuation
\[
	\tau^{-1}b^{-1}
	\leqc 
	 \la \ed
	\iff 
	\Xi^{3/2}\D_u^{1/2} \left ( \frac{\D_R^{1/2}B_\la^{3/2}N^{3/2}}{\D_u^{1/2}} \right )
	\leqc
	(B_\la N \Xi)^{3/2}\D_R^{1/2}
	\iff
	1
	\leqc
	1
\]
As a result we have
\[
	\la^{-2} \left( \la \ed \right )^{-1}\co{ D_t \eqref{eq: R_HIJ principal term}} \leqc b\D_R
\]
By similar direct calculations of the other terms $D_t \eqref{eq: R_HIJ lot term 1}, D_t \eqref{eq: R_HIJ lot term 2},  D_t \eqref{eq: R_HIJ lot term 3}$ it can be shown that 
\begin{equation}
	\label{eq: D_t R_HIJ estimate}
	\la^{-2} \left( \la \ed \right )^{-1}	\co{ D_t \eqref{eq: double div R_H}} \leqc b\D_R
\end{equation}
and by combining \eqref{eq: R_HIJ estimate} and \eqref{eq: D_t R_HIJ estimate} with \eqref{eq: R_H estimate} we conclude the claimed bound on $H[R_H]$.

\end{proof}
\subsubsection{Low Frequency Stress Estimate}
\begin{prop}
	\label{prop: R_S estimate}
	There exists a symmetric tensor, $R_S^{jl}$ as defined in \eqref{def: R_S} that satisfies the estimate 
	\[
		H[R_S] \leqc B_\la^{-1}N^{-1}\D_R
	\] 
\end{prop}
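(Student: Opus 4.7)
The plan is to reduce the estimate to the pointwise bounds already obtained in Proposition~\ref{prop: Bilinear microlocal estimates}.  By \eqref{eq: sum delta B_I}, we have
\[
R_S^{j\ell} \;=\; \sum_{I \in \Z \times \F} \delta B_I^{j\ell},
\]
so by the triangle inequality for $H[\cdot]$ (Lemma~\ref{lem: weighted norm}) it suffices to control $H[\delta B_I]$ uniformly in $I$ and note that the time supports of the $\delta B_I$ overlap at most a bounded number of times, since each $\Th_I$ (and hence each $\delta B_I$) is supported in the interval $[k\tau - 2\tau/3, k\tau + 2\tau/3]$ determined by the cutoff $\phi_k$ from Section~\ref{sec:Time cutoffs}.

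Next, I would plug Proposition~\ref{prop: Bilinear microlocal estimates} into the definition of the weighted norm.  For $0 \leq |\va| + r \leq L$ with $r \in \{0,1\}$,
\[
\frac{\|\nb_{\va} D_t^r \delta B_I\|_{C^0}}{\la^{|\va|} (\la \ed)^{r}}
\;\leqc_{|\va|}\;
B_\la^{-1} N^{-1} \D_R \;\cdot\; \frac{\Xi^{|\va|}}{\la^{|\va|}} \;\cdot\; N^{\frac{3}{2L}(|\va|+2-L)_+} \;\cdot\; \frac{\tau^{-r}}{(\la \ed)^{r}}.
\]
The factor $\Xi^{|\va|}/\la^{|\va|} = (B_\la N)^{-|\va|}$ is at most $1$ since $B_\la, N \geq 1$, and combining with $N^{3(|\va|+2-L)_+/(2L)}$ the product is bounded by $1$ using $L \geq 2$ (the worst case $|\va|=L$ gives $(B_\la N)^{-L} N^{3/L} \leqc 1$).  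It remains to verify the time-derivative factor $\tau^{-1} \leqc \la \ed$, which, after substituting $\la = B_\la N \Xi$, $\ed = (\la \D_R)^{1/2}$, $\tau^{-1} = \nat b^{-1}$, and the definition of $b$ from Section~\ref{sec:Time cutoffs}, reduces to
\[
\frac{\D_u^{1/4}}{\D_R^{1/4}} \;\leqc\; B_\la^{3/4} N^{3/4}.
\]
This follows from the Main Lemma's assumption $N \geq \D_u/\D_R$ (which is stronger than $N \geq (\D_u/\D_R)^{1/3}$ needed here) together with $B_\la \geq 1$.

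Combining these inequalities yields $H[\delta B_I] \leqc B_\la^{-1} N^{-1} \D_R$ uniformly in $I$, and summing over the at most $O(1)$ overlapping $\delta B_I$ at each time yields the claimed bound $H[R_S] \leqc B_\la^{-1} N^{-1} \D_R$.  I do not anticipate a real obstacle here: the algebraic cancellation that makes $R_S$ small has already been installed by the choice of $\ga_I$ in Section~\ref{sec: algebra of the cancellation}, and the remaining error $\sum \delta B_I$ has been estimated in Proposition~\ref{prop: Bilinear microlocal estimates}.  The only point requiring care is the bookkeeping of the $\Xi$ vs.\ $\la$ factors and the verification that $\tau^{-1} \leqc \la \ed$, which is essentially the same comparison that already appeared in the proofs of Propositions~\ref{prop: R_T estimate} and~\ref{prop: R_H estimate}.
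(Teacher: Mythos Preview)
Your proposal is correct and follows essentially the same approach as the paper: reduce $R_S$ to $\sum_I \delta B_I^{j\ell}$ via \eqref{eq: sum delta B_I}, invoke the bounded overlap of the time cutoffs, apply Proposition~\ref{prop: Bilinear microlocal estimates}, and then check the two scaling inequalities $N^{\frac{3}{2L}(|\va|+2-L)_+}\Xi^{|\va|} \leq \la^{|\va|}$ and $\tau^{-1} \leqc \la\ed$. The paper's own proof is organized identically, with the latter inequality likewise reduced to $\D_u^{1/4}/\D_R^{1/4} \leqc N^{3/4}$ via the Main Lemma's assumption $N \geq \D_u/\D_R$.
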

\begin{proof}
Recall from Section \ref{sec: Error terms} that the Low Frequency Stress Error is defined as 
\[
R_S^{jl} \coloneqq 
\sum_{I = (k, f) \in \Z \times \F} B_\la^{jl}[\Theta_I,  \Theta_{\overline{I}}] - \phi_k^2(t) 
\left ( 
e(t)M_{[k]} - 
R_\epsilon^{jl}
\right)
= \sum_{I \in \Z \times \F} \delta B_I^{jl}
\]
where the last equality holds due to the calculations of Section \ref{sec: algebra of the cancellation} and is stated in \eqref{eq: sum delta B_I} and where $\delta B_I^{jl}$ is defined in Lemma \ref{lem: Bilinear Microlocal}. Therefore it is enough to compute $H \left[ \sum_{I \in \Z \times \F} \delta B_I^{jl} \right ]$ directly. Due to the partition of unity in time $\{ \phi_k^2(t) \}_{k \in \Z}$ that appears in the definition of $\th_I$ and in the definition of $\delta B_I^{jl}$, $R_S$ is a locally finite sum in time. Hence for $0 \leq |\va | \leq L$ and $r = 0, 1$ 
\[
	\left \| \nb_{\va} D_t^r \sum_{I \in \Z \times \F} \delta B_I^{jl} \right \|_{C^0}
	\leqc 
	\sup_{I \in \Z \times \F} 
	\co{\nb_{\va} D_t^r \delta B_I^{jl} } 
	\leqc_{\va}
	B_\la^{-1}N^{-1}N^{\fr{3}{2L}(|\va| + 1 - L)_+}\Xi^{|\va|}\D_R \tau^{-r}.
\]
In the last inequality we used Proposition \ref{prop: Bilinear microlocal estimates}. We remark that the coarse scale velocity assumptions of \eqref{eq: coarse scale flow assumption} are satisfied by the flow $u = T[\theta]$ with the assumed bounds by the frequency and energy levels in Definition \ref{def: freq and energy levels}. Next we show the inequality $\tau^{-1} \leqc B_\la^{-3/4} \la \overset{\diamond}{e_u}{}^{1/2}$ holds due to the following calculation
\ALI{
	\tau^{-1} 
	\leq 
	B_\la^{-3/4} \la \overset{\diamond}{e}_u{}^{1/2}
	&\iff 
	\Xi^{3/2}\D_u^{1/2} \left ( \frac{\D_R^{1/2}B_\la^{3/2}N^{3/2}}{\D_u^{1/2}} \right )^{1/2} 
	\leqc
	B_\la^{-3/4}(B_\la N \Xi)^{3/2}\D_R^{1/2}
	\\
	&\iff
	\frac{\D_u^{1/4}}{\D_R^{1/4}}
	\leqc
	N^{3/4}.
}
The last inequality holds due to our assumtion on $N \geq \D_u / \D_R$ from the Main Lemma \ref{lem: main}. By using the inequality $N^{\fr{3}{2L}(|\va| + 1 - L)_+}\Xi^{|\va|} \leq \la^{|\va|}$ for $0 \leq |\va| \leq L$ and the inequality $\tau^{-1} \leqc \la \overset{\diamond}{e_u}{}^{1/2}$, we have that 
\[
	H[R_S] = H \left [ \sum_{I \in \Z \times \F} \delta B_I^{jl} \right ] \leqc B_\la^{-1}N^{-1}\D_R
\]
\end{proof}
\subsubsection{Mollification Stress Estimate}
\begin{prop}
	\label{prop: R_M estimate}
	There exists a symmetric tensor, $R_M^{jl}$ as defined in \eqref{def: R_M} and there exists a constant $B_\la \geq 1$ sufficiently large so that the following estimate is satisfied
	\[
		\ost H[R_M] \leq \frac{G\D_R}{1000} ,  \quad G \coloneqq \frac{\D_u^{1/4}}{\D_R^{1/4}N^{3/4}}
	\] 
where $G$ is the constant defined in the Main Lemma \ref{lem: main}. 
\end{prop}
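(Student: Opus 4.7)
The plan is to estimate $R_M = R - R_\ep$ by combining the frequency-energy level bounds for $R$ (Definition \ref{def: freq and energy levels}) with the mollification bounds for $R_\ep$ (Proposition \ref{prop: Mollification estimates}), then check term-by-term that every contribution to $\ost H[R_M]$ can be absorbed into $\fr{G \D_R}{1000}$ by taking the constant $c_0$ in \eqref{def: flow mollification parameters} small and $B_\la \geq 1$ large.

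The decisive step is the $C^0$ bound ($r = 0, |\va|=0$), which we read directly from Proposition \ref{prop: Mollification estimates}:
\ALI{
\co{R_M} \leqc \left(\nat \ep_t + \ep_x^L \Xi^L\right) \D_R.
}
Plugging in $\ep_t = c_0 N^{-3/2} \Xi^{-3/2} \D_R^{-1/2}$ and $\ep_x = c_0 N^{-3/(2L)} \Xi^{-1}$ and using $\nat = \Xi^{3/2} \D_u^{1/2}$ gives $\nat \ep_t \D_R \leqc c_0 N^{-3/2} \D_u^{1/2} \D_R^{1/2}$ and $\ep_x^L \Xi^L \D_R \leqc c_0^L N^{-3/2} \D_R$. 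Since $\D_u \geq \D_R$, the first term dominates, so $\co{R_M} \leqc c_0 N^{-3/2} \D_u^{1/2} \D_R^{1/2}$. Dividing by $G \D_R$ and using $N \geq \D_u/\D_R$ yields
\ALI{
\fr{\co{R_M}}{G \D_R} \leqc c_0 N^{-3/4} (\D_u/\D_R)^{1/4} \leq c_0 N^{-3/4} \cdot N^{1/4} = c_0 N^{-1/2} \leq c_0,
}
so choosing $c_0$ small forces this below $1/1000$.

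For the pure spatial derivatives $|\va| \geq 1$, $r = 0$, the triangle inequality and the bounds $\co{\nb_{\va} R} \leq \Xi^{|\va|} \D_R$, $\co{\nb_{\va} R_\ep} \leqc \Xi^{|\va|} \D_R$ give $\co{\nb_{\va} R_M}/\la^{|\va|} \leqc (B_\la N)^{-|\va|} \D_R \leq (B_\la N)^{-1} \D_R$. Comparing with $G \D_R/1000$ reduces, after using $\D_u \geq \D_R$, to the inequality $1000 \leqc B_\la N^{1/4} (\D_u/\D_R)^{1/4}$, which is satisfied by choosing $B_\la$ sufficiently large.

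For the terms with $r = 1$, write $\ost D_t R_M = D_t R_M + U^l \nb_l R_M$. The first piece is bounded using Proposition \ref{prop: Mollification estimates} and Definition \ref{def: freq and energy levels}, producing a ratio $\co{\nb_{\va} D_t R_M}/(\la^{|\va|} \la \ed) \leqc (B_\la N)^{-|\va|-3/2} \D_u^{1/2} \D_R^{1/2}$, which is $\leqc B_\la^{-3/2} N^{-3/4} (\D_R/\D_u)^{1/4} \cdot G \D_R$ after invoking $N \geq \D_u/\D_R$. The convection piece is bounded by using $\co{\nb_{\va} U^l} \leqc \la^{1/2 + |\va|} \D_R^{1/2}$ from Proposition \ref{prop: Correction estimates} together with the spatial bound on $\nb R_M$; this contributes a ratio of the form $(B_\la N)^{-1} \D_R/(G \D_R)$, controlled by the same mechanism as the spatial derivatives above. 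In all cases the required smallness is achieved by first fixing $c_0$ small (to handle the $C^0$ bound, which is the only place $c_0$ is essential) and then taking $B_\la$ sufficiently large (to beat the universal constants appearing at higher orders).

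The main obstacle is really just the $C^0$ bound, since every higher-derivative piece gains a factor of $(B_\la N)^{-1}$ that can absorb any universal constant; the subtlety is that the $C^0$ bound cannot gain anything from $B_\la$ (since $R_M$ is not frequency localized at scale $\la$), which is why the parameters $\ep_x, \ep_t$ must be chosen with the free small constant $c_0$ that the mollification ansatz \eqref{def: flow mollification parameters} reserves for exactly this purpose.
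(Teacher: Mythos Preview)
Your proof is correct and follows the same three-case structure as the paper: the $C^0$ bound is controlled by taking $c_0$ small in the mollification parameters, while the higher spatial and advective-derivative contributions each gain at least a factor $(B_\la N)^{-1}$ and are absorbed by choosing $B_\la$ large. The one organizational difference is that the paper invokes the weighted-norm comparison $\ost H[R_M] \leq C_0 H[R_M]$ from Lemma~\ref{lem: weighted norm} at the outset, so it only ever needs to estimate $D_t R_M$ rather than $\ost D_t R_M$; you instead expand $\ost D_t = D_t + U^l\nb_l$ and bound the convection piece $U^l\nb_l R_M$ by hand. Both routes work and give the same bounds; the paper's is marginally shorter since it avoids the extra Leibniz expansion.

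One minor algebraic slip: in your $r=1$ paragraph the ratio should read $B_\la^{-3/2} N^{-3/4} (\D_u/\D_R)^{1/4}$, not $(\D_R/\D_u)^{1/4}$. After invoking $N \geq \D_u/\D_R$ this is bounded by $B_\la^{-3/2} N^{-1/2}$ anyway, so the conclusion is unaffected.
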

\begin{proof}
We 
follow the calculations outlined in \cite[Section 18.3]{isett}. Recall from Lemma \ref{lem: weighted norm} that $\ost H[R_M] \leq C_0 H[R_M]$ for some positive constant $C_0$ and from \eqref{def: R_M} and Section \ref{sec: Mollification}, $R_M \coloneqq R - R_\ep$.

We begin with the 0th derivative bound from Proposition~\ref{prop: Mollification estimates}:
\[
	C_0
	\co{R_M} = C_0 \co{R - R_\ep}
	\leq C \D_R \left (
	 \nat \ep_t + \ep_x^L \Xi^{L}
	\right )
\]
for some positive constant $C$ independent of $B_\la$. By our choice of mollification parameters in \eqref{def: flow mollification parameters}
\[
	\epsilon_x = c_0N^{-\fr{3}{2L}}\Xi^{-1}, \quad
	\epsilon_t = c_0 N^{-3/2}\Xi^{-3/2}\D_R^{-1/2}
\] 
 and by taking the constant $c_0 > 0$ sufficiently small we obtain the bound
$
	C_0\co{R_M} 
	\leq
	\frac{\D_u^{1/2} \D_R^{1/2}}{1000N^{3/2}}
$
. We observe that
$
	\frac{\D_u^{1/2} \D_R^{1/2}}{1000N^{3/2}} 
	\leq 
	\fr{G \D_R}{1000}
$
if and only if
$
	\frac{\D_u^{1/4}}{ \D_R^{1/4}} 
	\leq 
	N^{3/4}
$
. This second condition holds because of the condition $N \geq \D_u / \D_R$ in the Main Lemma \ref{lem: main} so we are able to conclude that
\begin{equation}
	\label{eq: C^0  R_M estimate}
	C_0\co{R_M} 
	\leq 
	\fr{G \D_R}{1000}.
\end{equation}
Our next step is to show estimates for $C_0\co{\nb_{\va}R_M}$ in the case $1 \leq |\va | \leq L$. In this case we use Proposition \ref{prop: Mollification estimates} and the frequency and energy levels assumed in the Main Lemma \ref{lem: main} to get
\[
	C_0 \co{\nb_{\va}R_M} 
	\leq
	C_0
	\left (
	\co{\nb_{\va}R}
	+
	\co{\nb_{\va}R_\ep}
	\right )
	\leq
	C \Xi^{|\va|}\D_R
\]	
for some positive constant $C$ that is independent of $B_\la$. It remains to check $C \Xi^{|\va|}\D_R \leq \frac{\la^{|\va|} G\D_R}{1000}$. This is equivalent to checking $C\frac{\Xi^{|\va|}}{\la^{|\va|}}\leq \fr{G}{1000}$. By the definition of $\la \geq B_\la N \Xi$ in Section \ref{sec: corrections}, the assumption $|\va | \geq 1$, and by taking a constant $B_\la \geq 1$ suffieciently large we have that 
\[
	C\frac{\Xi^{|\va|}}{\la^{|\va|}}
	\leq
	\frac{C}{B_\la N}
	\leq
	\frac{C}{B_\la} \frac{1}{N}
	\leq
	\frac{1}{1000N}
	\leq
	\fr{G}{1000}
\]
Where in the last inequality we have used the condition on $N \geq 1$ from the Main Lemma \ref{lem: main} and the fact that $\left ( \D_u / \D_R \right )^{1/4} \leq 1$. We have now shown 
\begin{equation}
\label{eq: grad R_M estimate}
	C_0\frac{ \co{\nb_{\va} R_M} }{\la^{|\va|}} 
	\leq 
	\frac{G\D_R}{1000}, 
	\quad 
	\text{for }
	0\leq |\va| \leq L 
\end{equation}
Lastly, we want to show an estimate for $C_0\co{\nb_{\va} D_t R_M}$ for $0 \leq |\va | \leq L -1$. By Proposition \ref{prop: Mollification estimates} and Definition \ref{def: freq and energy levels}
\ALI{
	C_0 \co{\nb_{\va} D_t R_M}
	\leq
	\co{\nb_{\va} D_t R}
	+
	\co{\nb_{\va} D_t R_\ep}
	\leq
	C
	\Xi^{|\va|} \D_R \nat
}
for $0 \leq |\va | \leq L - 1$ and some positive constant $C$ independent of $B_\la$.  
The $N \geq \D_u / \D_R$ assumption 
implies $e_u^{1/2} \leq \ed$. Using this bound, the previous bound, and $\la \geq B_\la N \Xi$ 
shows
\[
	C_0 \frac{\co{\nb_{\va} D_t R_M}}{\la^{|\va|}\left ( \la \ed \right )} 
	\leq
	C
	\frac{
		\Xi^{|\va|}\D_R  \nat
	}
	{\la^{|\va|}\left ( \la \ed \right )} 
	\leq
	C \D_R
	\frac{\Xi}{\la} 
	=
	\frac{C}{B_\la} 
	\frac{\D_R}{N}
\]
We observe that if $B_\la$ is a sufficiently large constant then $\frac{C}{B_\la} \leq \frac{1}{1000}$ and $\frac{\D_R}{N} \leq G\D_R$ because of the condition $N \geq \D_u /\D_R$ from the Main Lemma \ref{lem: main}. Therefore we have the estimate 
\begin{equation}
	\label{eq: grad D_t R_M estimate}
	C_0\frac{\co{\nb_{\va} D_t R_M}}{\la^{|\va|}\left ( \la \ed \right )} 
	\leq
	\frac{G\D_R}{1000}
	, \quad
	\text{for } 0 \leq |\va| \leq L - 1
\end{equation}
By our three estimates \eqref{eq: C^0 R_M estimate}, \eqref{eq: grad R_M estimate}, and \eqref{eq: grad D_t R_M estimate} we can conclude the claimed bound
\ALI{
	\ost H[R_M] \leq C_0 H[R_M] \leq \frac{G\D_R}{1000}
}
\end{proof}
\subsection{Verifying the Conclusions of the Main Lemma} \label{sec: verifying the main lemma}
In this section we verify the conclusions of the Main Lemma \ref{lem: main}. We begin by checking that the new SQG-Reynolds flow $(\ost \th, \ost u, \ost R)$ has frequency and energy levels below $(\ost \Xi, \ost \D_u, \ost \D_R)$. First consider the new Reynold's Stress $\ost R$. By Lemma \ref{lem: weighted norm} $\ost H[\ost R] \leq C H[\ost R]$ for some positive constant $C$ that is independent of $B_\la$. By Propositions \ref{prop: R_H estimate}, \ref{prop: R_T estimate},\ref{prop: R_S estimate}, and \ref{prop: R_M estimate}
\ALI{
	\ost H[\ost R] 
	&\leq
	C
	\left (
	H[R_H] + H[R_T] + H[ R_S ] 
	\right )
	+ \ost H[R_M]
	\\
	&\leq
	C \left (
	b\D_R 
	+
	(B_\la N \Xi)^{-3/2}\D_R^{1/2}\nat b^{-1}
	+ 
	B_\la^{-1} N^{-1} \D_R
	\right )
	+
	\fr{G\D_R}{1000}
}
Recalling the definitions from Section \ref{sec:Time cutoffs} and Main Lemma \ref{lem: main}
\[
	b = 
	\left(
		\frac{\D_u^{1/2}}{\D_R^{1/2}B_\la^{3/2}N^{3/2}}
	\right)^{1/2}b_0	,
	\quad
	G
	=
	\frac{\D_u^{1/4}}{\D_R^{1/4}N^{3/4}}
\]	
and by taking $B_\la \gg 1$ sufficiently large the last sum will be small enough to ensure that
\[
	\ost H[\ost R] \leq \frac{G \D_R}{10}
\]
By the definition of $\ost{H}[\cdot ]$ this shows $\ost R$ satisfies the bounds of Definition~\ref{def: freq and energy levels} for the new frequency-energy levels $(\ost \Xi, \ost \D_u, \ost \D_R)$. Additionally we check that $\ost \th, \ost u$ satisfy the desired bounds.  
We present only the calculations for $\ost \th$ as the $\ost u$ calculations are completely identical. For $0 \leq |\va| \leq L$ and $r = 0, 1$ 
\[
	\co{\nb_{\va} D_t^r \ost \th}
	\leq
	\co{\nb_{\va} D_t^r \th}
	+
	\co{\nb_{\va} D_t^r \Theta}	
	\leqc
	N^{\frac{3}{2L}(|\va| + 1 - L)_+}
	\Xi^{|\va|}
	\D_R^{1/2}\tau^{-r}
	+
	\la^{|\va| + 1/2}\D_R^{1/2}\tau^{-r}
\]
where the last inequalities are due to Propositions \ref{prop: Amplitude estimates}, \ref{prop: Correction estimates}. We observe that for $L \geq 2$ we have $N^{\fr{3}{2L}(|\va| + 1 - L)_+ }\Xi^{|\va|} \leq \la^{|\va|} \leq \ost \Xi^{|\va|} $. Additionally $\la^{1/2}D_R^{1/2} \leq \ost e_u^{1/2}$ and we verify $\tau^{-1} \leq \ost \Xi \ost e_u^{1/2}$ below
\ALI{
	\tau^{-1} \leq \ost \Xi \ost e_u^{1/2}
	\iff
	b^{-1}\Xi^{3/2}\D_u^{1/2} 
	\leq
	(\hc N \Xi)^{3/2}\D_R^{1/2}
	&\iff \\
	b_0^{-1}\frac{\D_u^{1/4}B_\la^{3/4}N^{3/4}}{\D_u^{1/4}}\D_u^{1/2}
	\leq 
	(\hc  N)^{3/2}\D_R^{1/2}
	&\iff
	\frac{\D_u^{1/4}}{\D_R^{1/4}}B_\la^{3/4}
	\leq 
	b_0
	\hc^{3/2}N^{3/4}
}
Here the last inequality holds due to our choice of $N \geq \D_u / \D_R$ in the Main Lemma \ref{lem: main} and by taking $\hc \gg B_\la$. By these inequalities we have that for $0 \leq |\va| \leq L$ and $r = 0, 1$ 
\[
	\co{\nb_{\va} D_t^r \ost \th}
	\leqc
	\la^{|\va| + 1/2}\D_R^{1/2}\tau^{-r}
	\leq
	\ost \Xi^{|\va|} \ost e_u^{1/2} 
	\left (
	\ost \Xi \ost e_u^{1/2} 
	\right )^r,
\]
where again the last inequality holds for a sufficiently large choice of constant $\hc$. We have now shown the claimed frequency and energy levels $(\ost \th, \ost u, \ost R) \leq (\ost \Xi, \ost \D_u, \ost \D_R)$. Next we will show the claimed bounds of $\La^{-1/2}\Theta $ and $W^i$ where $\La^{-1/2}\Theta = \nb_i W^i$. Recall from Section \ref{sec:freq localizing op} that the scalar correction $\Theta$ has a frequency supp in the frequency band of scale $\la$. The compact frequency supports with standard Littlewood-Paley theory and Proposition \ref{prop: Correction estimates} gives the following estimate for $|\va | \leq 1$
\[
	\co{\nb_{\va} \La^{-1/2} \Theta} 
	\leqc \la^{|\va | - 1/2}\co{\Theta}
	\leqc
	\la^{|\va |}D_R^{1/2} 
\]
By taking $\hc \gg B_\la$ sufficiently large we get the claimed estimate from the Main Lemma \ref{lem: main}. 
\[
	\co{\nb_{\va} \La^{-1/2}\Theta} \leq \hc (N \Xi)^{|\va|}\D_R^{1/2}
\]
Next let $\RR^i \coloneqq (-\Delta)^{-1}\nb^i$ be a first order anti-divergence operator and let $P_\la$ be an operator that localizes to frequency $\{ |\xi| \sim \la \}$ such that $\Th = P_\la \Th$.  
Define $W^i \coloneqq \RR^i P_{\la}\La^{-1/2} \Theta$.  Then
\[
	\co{W^i} 
	=
	\co{\RR^i P_{\la}\La^{-1/2} \Theta } 
	\leqc
	\la^{-1}\co{\La^{-1/2} \Theta} 
	\leqc \la^{-1}D_R^{1/2}.
\]
For $|\va| \leq 1$, we then have
\[
	\la \co{\nb_{\va} W^i} 
	+
	\co{\nb_{\va} \La^{-1/2} \Theta} 
	\leqc
	\la^{|\va |}D_R^{1/2}
\]
By using the definition of $\la \geq B_\la N\Xi$ from Section \ref{sec: corrections} and by taking $\hc \gg B_\la$ sufficiently large this shows the claimed estimate from the Main Lemma \ref{lem: main}
\[
	\co{\nb_{\va} W^i} 
	\leq
	\hc 
	(N\Xi )D_R^{1/2}
\]
Next we consider $\co{\pr_t \La^{-1/2}\Theta}$. We begin with the term $\pr_t \La^{-1/2}\Theta$
\begin{equation}
	\label{eq: pr_t Theta in C -1/2}
	\pr_t \La^{-1/2}\Theta
	=
	\La^{-1/2}D_t \Theta - \La^{-1/2} \left ( u^l \nb_l \Theta \right )
\end{equation}
Consider the first term \eqref{eq: pr_t Theta in C -1/2}. We observe that $D_t \Theta$ is also supported in frequency at scales $\la$ due to the how $u = T \th$ and $\hat{\th}$ is supported at scales $\Xi \ll \la$ as stated in Definition \ref{def: freq and energy levels}. By the frequency support and Proposition \ref{prop: Correction estimates}
\[
	\co{\Lambda^{-1/2} D_t \Theta} 
	\leqc
	\la^{-1/2}\co{ D_t \Theta} 
	\leqc
	\la^{-1/2} \la^{1/2}\D_R^{1/2}\tau^{-1}
	=
	\D_R^{1/2}\tau^{-1}
\]
For the second term in \eqref{eq: pr_t Theta in C -1/2} we again use the fact that the function $u^l \nb_l \Theta$ is supported in frequency at scales $\la$ to obtain 
\[
	\co{  \La^{-1/2} \left ( u^l \nb_l \Theta \right )}
	\leqc
	\la^{-1/2} \co{u} \co{\nb_l\Theta}
	\leqc
	e_u^{1/2} \la \D_R^{1/2}.
\]	
In total we have shown 
\[
	\co{\pr_t \La^{-1/2}\Theta} \leqc \D_R^{1/2} \left( \tau^{-1} + \la e_u^{1/2} \right ).
\]
We now compare the time scales and we verify that $\tau^{-1} \leqc B_\la^{-1/4} \la e_u^{1/2}$ with the calculation below:
\ALI{
	\tau^{-1} 
	\leqc
	B_\la^{-1/4}
	\la e_u^{1/2}
	\iff
	b^{-1}
	\leqc 
	B_\la^{3/4} N
	\iff
	b_0^{-1}
	\frac{\D_R^{1/4}B_\la^{3/4} N^{3/4}}{\D_u^{1/4}}
	\leqc 
	B_\la^{3/4} N
	\iff
	\frac{\D_R^{1/4}}{\D_u^{1/4}}
	\leqc
	N^{1/4}.
}
Where the last inequality holds because $\D_R / \D_u \leq 1$ and $N \geq 1$. By this calculation we can obtain the claimed estimate by taking a constant $\hc \gg B_\la$ sufficiently large to get
\[
	\co{\pr_t \La^{-1/2}\Theta} 
	\leqc
	\D_R^{1/2} \la e_u^{1/2}
	\leq
	\hc \D_R^{1/2} (N\Xi e_u^{1/2})
\]
All that remains to check is the time support of condition \eqref{eq: time support growth}. This containment follows from the time support of the lifting function $e(t)$ defined in Section \ref{sec: lifting function} and by verifying that $\ep_t \ll \nat^{-1}$, which in turn follows from the definition of $\ep_t = c_0 N^{-3/2} \Xi^{-3/2}\D_R^{1/2}$ in \eqref{def: flow mollification parameters} and the condition $N \geq \D_u / \D_R$ from the Main Lemma \ref{lem: main}. 
\section{Main Lemma Implies the Main Theorem} \label{sec:mainImpMainThm}
In this section we prove the Main Theorem \ref{lem: main} using the Main Lemma \ref{lem: main}. Before applying the Main Lemma \ref{lem: main} we fix the parameter $L = 2$. Given $0 < \a < 3/10, \; 0 < \b < 1/4$, we will produce by iteration of the Main Lemma a sequence of solutions $(\th_{(k)}, R_{(k)})$ to the SQG-Reynold's system such that $R_{(k)} \rightarrow 0$ uniformly in $C^0(\R \times \T^2)$, the time support of $\th_{(k)}$ is contained in the interval $J_{(k)}$, and the sequence of functions $\{ \La^{-1/2}\th_{(k)}\}_{k \in \N}$ converges in $C_t^\b C^0_x \cap C_t^0 C_x^\a(\R \times \T^2)$. We remark that the non-triviality of our constructed solutions will be a consequence of the h-principle proven in Section \ref{sec: h-principle}. 
\subsection{The base case $k = 0$}
Fix $\a,\b$ be such that $0 < \a < 3/10, 0 < \b < 1/4$.  For stage $k = 0$ we assume that $(\th_{(0)}, R_{(0)})$ are any given smooth, compactly supported SQG-Reynolds flow on $\R \times \T^2$ (possibly $0$).  We assume also that the support of the spatial Fourier transform $\supp \hat{\th}$ is bounded in frequency space, uniformly in time.   

Let $J_{(0)}$ be a nonempty time interval containing the support of $(\th_{(0)}, R_{(0)})$.  We choose a triple of parameters $\left(
		\Xi_{(0)}, \D_{u(0)}, \D_{R(0)}
	\right)$ that are admissible according to Definition~\ref{def: freq and energy levels} of the frequency and energy levels.  Namely, take $\D_{R(0)}$ to be any number $\D_{R(0)} \geq \co{R_{(0)}}$, and set $\D_{u(0)}$ to equal $Z\D_{R(0)}$, where $Z = Z_{\a,\b} \gg 1$ is chosen to satisfy conditions~\eqref{eq: Z condition alpha} and \eqref{eq: Z condition beta} below.  Finally, take $\Xi_{(0)}$ sufficiently large so that the remaining conditions in Definition~\ref{def: freq and energy levels} are satisfied.

\subsection{Choosing the parameters} \label{sec: choosing the parameters}
We iterate the Main Lemma~\ref{lem: main} to produce SQG-Reynolds solutions  $(\th_{(k)}, R_{(k)})$ that are bounded above by the corresponding frequency and energy levels $\left( \Xi_{(k)}, \D_{u(k)}, \D_{R(k)}\right)$. We also impose the ansatz
\begin{equation}
	\label{eq: ansatz}
	\D_{R(k+1)} = \frac{\D_{R(k)}}{Z}, 
	\quad
	\text{for all }k \geq 0.
\end{equation}
The parameter $Z = Z_{\a,\b}$ is chosen sufficiently large to guarantee the condtions \eqref{eq: Z condition alpha}, \eqref{eq: Z condition beta}.

To achieve this ansatz we must choose the frequency growth parameter $N$ to be 
\begin{equation}
	\label{eq: freq increment}
	N = Z^{5/3}.
\end{equation}
Note that by the ansatz, after the $k$th application of the Main Lemma \ref{lem: main}
$
	Z = \frac{\D_{R(k-1)}}{\D_{R(k)}} = 
	\frac{\D_{u(k)}}{\D_{R(k)}}
$
and it follows that after the $k+1$th application of the Main Lemma \ref{lem: main}
\[
	\D_{R(k+1)} = G_{(k)} \D_{R(k)}
	=
	\frac{Z^{1/4}}{N^{3/4}} \D_{R(k)}
	=
	\frac{\D_{R(k)}}{Z}.
\]
Hence by our choice of $N$ we can maintain the ansatz for all $k \geq 0$. 
\subsection{Regularity of the Solution}
In this section we will show the convergence of $\{ \La^{-1/2}\th_{(k)}\}_{k \in \N}$ in $C^\b_tC_x^0 \cap C_t^0C^\a_x(\R \times \T^2)$ for the given $\a, \b$.  As a consequence, we have strong convergence of $\th_{(k)}$ in $L_t^2 H^{-1/2}$, which implies weak convergence of the nonlinearity $\nb_l[ \th_{(k)} T^l[\th]] \rightharpoonup \nb_l[\th T^l[\th]]$ in $\DD'(\R \times \T^2)$.  Since $\nb_j\nb_l R_{(k)}^{jl} \rightharpoonup 0$, the limiting scalar field $\th$ is therefore a weak solution to the SQG equation.

We first calculate the $C_x^\a(\T^2)$ regularity. Define the partial sum
\[
	\La^{-1/2}\th_{(K)}
	\coloneqq
	\sum_{k = 0}^{K}\La^{-1/2} \Theta_{(k)}
\]
where $\Theta_{(k)}$ is the scalar correction from the $k$th application of the Main Lemma \ref{lem: main}. From this defintion it follows that
\[
	\ca{\La^{-1/2}\th_{(K)}}
	\leq 
	\sum_{k = 0}^{K}\ca{\La^{-1/2} \Theta_{(k)}}
\]
and it will enough to show a bound on $\ca{\La^{-1/2}\Theta_{(k+1)}}$ that decays exponentially in $k$. We may interpolate between the estimates for $|\va | = 0, 1$ derivatives given at the end of the $k+1$th application of the Main Lemma \ref{lem: main} to obtain the following bound for $\a \in (0, 1)$
\[
	\ca{\La^{-1/2}\Theta_{(k+1)}} \leq \hc (N \Xi_{(k)})^\a \D_{R(k)}^{1/2}
\]
Define the upperbound quantity
\[
	E_{\a (k+1)} \coloneqq
	(N \Xi_{(k)})^\a \D_{R(k)}^{1/2} 
\]
so that $\ca{\La^{-1/2}\Theta_{(k+1)}} \leq  \hc E_{\a (k+1)}$. We will show that this upperbound $E_{\a (k+1)}$ decays exponentially in $k$ for an appropriate parameter $\a$.  We apply the ansatz \eqref{eq: ansatz}, our choice of $N = Z^{5/3}$ from \eqref{eq: freq increment}, and the frequency and energy levels from the Main Lemma \ref{lem: main}
\[
	E_{\a (k+1)}
	=
	Z^{5\a / 3}(\hc N \Xi_{(k-1)})^\a Z^{-1/2} \D_{R(k-1)}^{1/2}
	=
	\hc^{\a} Z^{\fr{10\a - 3}{6}} 
	(N \Xi_{(k-1)})^\a \D_{R(k-1)}^{1/2}
	=
	\hc^{\a} Z^{\fr{10\a - 3}{6}} 
	E_{\a (k)}
\]
Now we observe that $E_{\a (k+1)} < E_{\a (k)}$ if and only if 
$
	\hc^{\a} Z^{\fr{10\a - 3}{6}} 
	=\left (
		\hc^{\frac{6\a}{10\a - 3}}Z
	\right )^{\frac{10\a - 3}{6}}< 1
$. For a given $0 < \a < 3/10$ we choose the parameter $Z \gg 1$ sufficiently large so that 
\begin{equation}
	\label{eq: Z condition alpha}
	\left (
	\hc^{\frac{6\a}{10\a - 3}}Z
	\right ) < 1.
\end{equation}
It follows that, with $Z$ as above,  
the upperbound quantity $E_{\a(k+1)}$ decays exponentially in $k$. Hence for the function $\La^{-1/2}\th \coloneqq \lim_{K \rightarrow \infty}\La^{-1/2}\th_{(K)}$ 
\[
	\max_t \ca{\La^{-1/2}\th}
	\leq
	\sum_{k = 0}^{\infty} \max_t \ca{\La^{-1/2} \Theta_{(k)}}
	< 
	\infty
\]
for the given $\a < 3/10$.

Next we establish the $C^{\b}_tC_x^0$ regularity. By interpolating the estimates at the end of $k+1$th application of the Main Theorem \ref{lem: main} again we obtain the bound 
 \[
	\left\|\La^{-1/2}\Theta_{(k+1)} \right\|_{C_t^\b C_x^0} \leq \hc \D_{R(k)}^{1/2}\left ( (N\Xi_{(k)}) e_{u(k)}^{1/2} \right )^\b
 \]
and we define the upperbound quantity
\[
	E_{\b(k+1)} \coloneqq
	\D_{R(k)}^{1/2}\left ( N\Xi_{(k)} e_{u(k)}^{1/2} \right )^\b
\]
In order to show that this quantity decays as exponentially in $k$ we will need the identity 
\begin{equation}
	\label{eq: e_u(k) identity}
	e_{u(k)}^{1/2} = \hc^{1/2} Z^{1/3} e_{u(k-1)}^{1/2}.
\end{equation}
This identity can be directly computed using the Main Lemma \ref{lem: main}, our choice of $N = Z^{5/3}$ from \eqref{eq: freq increment}, ansatz \eqref{eq: ansatz}, and the identity $\D_{u(k)} = \frac{\D_{u(k-1)}}{Z}$ that comes from combining the ansatz \eqref{eq: ansatz} with the Main Lemma \ref{lem: main} 
\[
	e_{u(k)}^{1/2} 
	=
	\Xi_{(k)}^{1/2}\D_{u(k)}^{1/2}
	=
	(\hc N\Xi_{(k-1)})^{1/2}\frac{\D_{u(k-1)}^{1/2}}{Z^{1/2}}
	=
	\hc^{1/2} N^{1/2}Z^{-1/2}e_{u(k-1)}^{1/2}
	=
	\hc^{1/2} Z^{1/3} e_{u(k-1)}^{1/2}.
\]
We apply \eqref{eq: e_u(k) identity} and calculate $E_{\b(k+1)}$ as
\ALI{
	E_{\b (k+1)}
	&=
	\frac{\D_{R(k-1)}^{1/2}}{Z^{1/2}} \left ( Z^{5/3} (\hc N\Xi_{(k-1)}) \hc^{1/2} Z^{1/3} e_{u(k-1)}^{1/2} \right )^\b
	\\
	&=
	\hc^{\frac{3\b}{2}}Z^{2\b - \frac 12} \D_{R(k-1)}^{1/2} \left  ( N\Xi_{(k-1)} e_{u(k-1)}^{1/2} \right )^\b
	\\
	&=
	\hc^{\frac{3\b}{2}}Z^{\frac{4\b - 1}{2}} E_{\b (k)}.
}
We observe that the upperbound quantity $E_{\b (k+1)}$ decays exponentially in $k$ if and only if 
\[
	\hc^{\frac{3\b}{2}}Z^{\frac{4\b - 1}{2}} = \left ( \hc^{\frac{3\b}{4\b -1 }}Z \right )^{\frac{4\b - 1}{2}} < 1.
\]
For a given $0 < \b <  1/4$ we can choose a parameter $Z \gg 1$ sufficiently large so that 
\begin{equation}
	\label{eq: Z condition beta}
	\left ( \hc^{\frac{3\b}{4\b -1 }}Z \right )^{\frac{4\b - 1}{2}} < 1.
\end{equation}
From this inequality, it follows that $E_{\b (k+1)}$ decays as exponentially in $k$ and 
\ALI{
	\left\|\La^{-1/2}\th\right\|_{C_t^\b C_x^0}
	\leq
	\sum_{k = 0}^{\infty} \left\|\La^{-1/2} \Theta_{(k)}\right\|_{C_t^\b C_x^0}
	< 
	\infty,
	}
for the given $\b < 1/4$.  

Since $C_t^\b C_x^0$ and $C_t^0 C_x^\a$ are Banach spaces, we have shown that $\La^{-1/2}\th \in C_t^\b C_x^0 \cap C_t^0 C_x^\a$.

\subsection{Compact Support in Time of the Solution}
In this section we will discuss how the limiting SQG solution, $\displaystyle \th \coloneqq \lim_{k\rightarrow \infty} \th_{(k)}$, has compact support in time. By \eqref{eq: time support growth} it is enough to show that the series 
$
	\sum_{k = 0}^\infty \left( \Xi_{(k)} e_{u(k)}^{1/2}\right )^{-1}
$
converges. By the choice of $N = Z^{5/3}$ from \eqref{eq: freq increment} and the identity \eqref{eq: e_u(k) identity}, we have
\[
	\left ( \Xi_{(k)} e_{u(k)}^{1/2} \right )^{-1}
	=
	\hc^{-3/2} Z^{-2}
	\left ( \Xi_{(k-1)} e_{u(k-1)}^{1/2} \right )^{-1}
\]
We note that $\hc^{-3/2} Z^{-2} < 1$, and so
$
	\sum_{k = 0}^\infty \left( \Xi_{(k)} e_{u(k)}^{1/2}\right )^{-1}
	< 
	\infty
$.
\section{An h-principle}\label{sec: h-principle}
We present the proof to Theorem \ref{thm: h-principle} here. 

Let $f$ be a given smooth scalar field with compact support that satisfies $\int_{\T^2}f(x,t) dx = 0$ for all $t$, and let $0 < \a < 3/10, 0 < \b < 1/4$ be given.  We want to construct a sequence of weak solutions $\th_n$ to the SQG equation of class $\La^{-1/2} \th_n \in C_t^0 C_x^\a \cap C_t^\b C_x^0$ with such that $\La^{-1/2} \th_n \rightharpoonup \La^{-1/2} f$ in $L_{t,x}^\infty$ weak-$\ast$.

The first step is to approximate $f$ by scalar fields $f_n$, $n \in \N$, that have compact frequency support.  Let $\eta_n(h) = 2^{2n} \eta(2^n h)$ be a standard mollifying kernel in the spatial variables with compact frequency support $\mbox{supp } \hat{\eta}_n \subseteq \{ |\xi| \leq 2^n \}$.  Set $f_n = \eta_n \ast f$.  With this choice, we have that
\ali{
 \sup_n \| \nb_{\va} \pr_t^r f_n \|_{C^0} \leqc \| \nb_{\va}\pr_t^r f \|_{C^0}, \text{ for } 0 \leq |\va|, r, \text{ and } \lim_{n \to \infty} \co{ \La^{-1/2}(f_n - f) } = 0. \label{eq:boundsOnfn}
}
The time support of $f_n$ is also contained in the time support of $f$.

Now observe that the scalar fields $f_n$ can be realized as SQG-Reynolds flows.  
Namely, consider $\pr_t f_n + T^l f_n\nb_l f_n$ and observe that this function has mean-0 over $\T^2$ due to the non-linear term $T^l f_n\nb_l f_n = \nb_l[f_n T^lf_n]$ being the divergence of a smooth vector field and the assumption  that $\int_{\T^2}f(x,t) dx = 0$ in  Theorem \ref{thm: h-principle}. Therefore we may apply the second-order anti-divergence operator $\RR$ defined in \eqref{def: second order anti-div operator}.  Specifically, we define symmetric and traceless tensors $R_n$ such that
\[
	R_n \coloneqq \RR \left [ \pr_t f_n + T^l f_n\nb_l f_n \right], \text{which implies } 
	\pr_t f_n + T^l f_n\nb_l f_n = \nb_j \nb_l R_n^{jl}.
\]
The pair $(f_n, R_n)$ is thus a smooth SQG-Reynolds flow with frequency support $\mbox{supp } \hat{f} \subseteq \{ |\xi| \leq 2^n \}$.  Observe also that by \eqref{eq:boundsOnfn}, we have a uniform bound on the errors $R_n$:
\ALI{
\sup_n \| R_n \|_{C^0} &\leq \dmd{\D}_R < \infty \quad \text{for some } \dmd{\D}_R > 0.
}

Starting with $(f_n, R_n)$, we now construct a weak solution $\th_n$ to SQG close to $f_n$ in the appropriate weak topology.  
The SQG solution $\th_n$ is constructed by iterating the Main Lemma \ref{lem: main} using an initial choice frequency and energy levels 
$
	\left ( 
		\Xi_{n(0)}, \D_{R, n(0)}, \D_{u, n(0)}
	\right )
$
chosen as follows.  Take $\D_{R,n(0)} = \dmd{\D}_R > 0$ and $\D_{u,n(0)} = Z \D_{R,n(0)}$ where $Z = Z_{\a,\b} \gg 1$ is as in Section~\ref{sec:mainImpMainThm}.  Take $\Xi_{n(0)}$ sufficiently large such that the SQG-Reynolds flow $(\th_{n(0)}, R_{n,(0)}) \coloneqq (f_n, R_n)$ satisfies Definition~\ref{def: freq and energy levels}, and such that
\begin{equation}
	\label{eq: initial freq condition}
	\lim_{n \to \infty} \Xi_{n(0)} = \infty.
\end{equation}
In particular, the frequency support condition will be satisfied for $\Xi_{n(0)} \geq 2^n$.  

Repeated application of the Main Lemma using the parameter choices of Section~\ref{sec:mainImpMainThm} yields a sequence of SQG-Reynolds flows $(\th_{n(k)}, R_{n(k)})$ such that $\th_{n(k)}$ converges as $k \to \infty$ to an SQG solution $\th_n$ with $\La^{-1/2} \th_n \in C_t^\b C_x^0 \cap C_t^0 C_x^\a$.  
We claim that the solutions $\th_n$ satisfy $\La^{-1/2} \th_n \rightharpoonup \La^{-1/2} f$ in $L_{t,x}^\infty$ weak-*.  

Let $\ep > 0 $ and a test function $\vphi \in L^1(\R \times \T^2)$ be given.  We will show that there exists a large index $M$, depending on $\vphi$ and $\ep$, such that for all $n \geq M$
\[	
	\left |
		\int_{\R \times \T^2} \La^{-1/2}(\th_n - f) \vphi dx dt 
	\right |
	< 3\ep.
\]
First, observe by \eqref{eq:boundsOnfn} and H\"{o}lder that for all $n \geq M$ sufficiently large we have 
\ali{
\label{eq:cptFreqSuppApprox}
\left |
		\int_{\R \times \T^2} \La^{-1/2}(f_n - f) \vphi dx dt 
	\right |
	< \ep.
}
Next, take a smooth approximation of compact support $\tilde{\vphi} \in C_c^\infty(\R \times \T^2)$ with
\begin{equation}
	\label{eq: choice of vphi approx}
	\left \|
		\vphi - \tilde{\vphi}
	\right \|_{L^1(\R \times \T^2)}
	< 
	\frac{\ep}{ 2 \hc \dmd{\D}_R^{1/2}},
\end{equation}
where $\hc$ is the constant from the Main Lemma \ref{lem: main}.  It now suffices to estimate the integral 
\begin{align}
	\left |
		\int_{\R \times \T^2} \La^{-1/2}(\th_n - f_n) \vphi dx dt 
	\right |
	&\leq
	\label{eq: h-prin int 1}
	\left |
		\int_{\R \times \T^2} \La^{-1/2}(\th_n - f_n) \tilde{\vphi} dx dt 
	\right |
	\\
	&+
	\label{eq: h-prin int 2}
	\left |
		\int_{\R \times \T^2} \La^{-1/2}(\th_n - f_n) (\vphi - \tilde{\vphi}) dx dt 
	\right |.
\end{align}
Let $\Theta_{n(k)}$ be the corrections defined so that $\th_n - f_n = \sum_{k\geq 0}\Theta_{n(k)}$ and the integral \eqref{eq: h-prin int 1} is 
\[
	 \eqref{eq: h-prin int 1}
	 =
	\left |
		\int_{\R \times \T^2} \sum_{k \geq 0} \La^{-1/2}\Theta_{n(k)} \tilde{\vphi} dx dt 
	\right |.	 
\]
From the Main Lemma \ref{lem: main}, $\La^{-1/2}\Theta_{n(k)} = \nb_i W^i_{n(k)}$, $\co{W_{n(k)}} \leq \hc \left( N \Xi_{n(k)} \right)^{-1}\D_{R, n(k)}^{1/2}$, and 
\ALI{
	\eqref{eq: h-prin int 1} 
	&\leq 
	\sum_{k \geq 0}
	\left |
		\int_{\R \times \T^2} \nb_i W^i_{n(k)} \tilde{\vphi} dx dt 
	\right |
	\leq 
	\sum_{k \geq 0}
	\co{ W_{n(k)}} \| \nb \tilde{\vphi} \|_{L^1}
	\leq
	\| \nb \tilde{\vphi} \|_{L^1}
	\sum_{k \geq 0}
	\hc \left( N \Xi_{n(k)} \right )^{-1}\D_{R, n(k)}^{1/2}
	\\
	&\leq
	\frac{2 \hc \D_{R,n(0)}^{1/2}}{N \Xi_{n(0)}} \| \nb \tilde{\vphi} \|_{L^1}
	\leq
	\frac{\hc \dmd{\D}_R^{1/2}}{\Xi_{n(0)}} \| \nb \tilde{\vphi} \|_{L^1}
	< \ep.  
}
Here we used that $Z \gg 1$ and $\Xi_{n(k+1)}^{-1} \D_{R,n(k+1)}^{1/2} \leq Z^{-\fr{5}{3}-\fr{1}{2}}\Xi_{n(k)}^{-1} \D_{R,n(k)}^{1/2}$ to sum the geometric series, and the last inequality holds by \eqref{eq: initial freq condition} for all $n\geq M$ sufficiently large. To bound \eqref{eq: h-prin int 2}, we now observe 
\[
	\eqref{eq: h-prin int 2} 
	\leq 
	\co{ \La^{-1/2}( \th_n - f_n) }
	\| \vphi - \tilde{\vphi}\|_{L^1}.
\]
Next we show a bound on $\co{\La^{-1/2}( \th_n - f_n)}$. To start
$
	\co{ \La^{-1/2}( \th_n - f_n) }
	\leq
	\sum_{k\geq 0}
	\co{\La^{-1/2}\Theta_{n(k)}}
$
Recall from the Main Lemma \ref{lem: main} that $\co{\La^{-1/2}\Theta_{n(k)}} \leq \hc \D_{R, n(k)}^{1/2}$ and we use the ansatz from \eqref{eq: ansatz} to get $\D_{R, n(k)} = \frac{\D_{R,n(0)}}{Z^{k}}$. From this equation, it follows that 
\[
	\co{ \La^{-1/2}( \th_n - f_n) }
	\leq 
	\sum_{k\geq 0}\hc \D_{R, n(0)}^{1/2} Z^{-k/2}
	\leq
	2 \hc \D_{R, n(0)}^{1/2}
	=
	2 \hc \dmd{\D}_R^{1/2},
\]
and therefore 
\[
	\eqref{eq: h-prin int 2} 
	\leq
	2 \hc \dmd{\D}_R^{1/2}
	\| \vphi - \tilde{\vphi}\|_{L^1}
	< \ep.
\]
The last inequality is due to our choice of $\tilde{\vphi}$ in \eqref{eq: choice of vphi approx}. By combining the estimates of \eqref{eq:cptFreqSuppApprox}, \eqref{eq: h-prin int 1} and \eqref{eq: h-prin int 2}, we have now shown that there exists an $M$ such that for all $n\geq M$
\[	
	\left |
		\int_{\R \times \T^2} \La^{-1/2}(\th_n - f) \vphi dx dt 
	\right |
	< 3\ep,
\]
and we can conclude the claim that $\La^{-1/2}\th_n \rightharpoonup \La^{-1/2}f$ in the $L^\infty$ weak-$\ast$ topology. 
\bibliographystyle{abbrv}
\bibliography{eulerOnRn}

\end{document}